\numberwithin{equation}{section}
\newtheorem{theorem}{Theorem}[section]
\newtheorem{lemma}[theorem]{Lemma}
\newtheorem{proposition}[theorem]{Proposition}
\newtheorem{corollary}[theorem]{Corollary}
\theoremstyle{definition}
\newtheorem{example}[theorem]{Example}
\theoremstyle{remark}
\newtheorem{remark}[theorem]{Remark}
\newcommand{\C}{\mathbb{C}}
\newcommand{\Q}{\mathbb{Q}}
\newcommand{\Z}{\mathbb{Z}}
\newcommand{\Rank}{\operatorname{rank}}
\begin{document}

\title{Pivotal condensation and chemical balancing}

\author{Hans-Christian Herbig\,\orcidlink{0000-0003-2676-3340}}
\email{herbighc@gmail.com}
\address{Departamento de Matem\'{a}tica Aplicada, Universidade Federal do Rio de Janeiro,
Av. Athos da Silveira Ramos 149, Centro de Tecnologia - Bloco C, CEP: 21941-909 - Rio de Janeiro, Brazil}

\subjclass[2020]{Primary 15A06; Secondary 11D04, 80A32}
\keywords{Chemical balancing, homogeneous linear systems, pivotal condensation, systems of linear Diophantine equations, matrix inversion, four subspaces.}

\begin{abstract} 
I present a universal method, called pivotal condensation, for calculating stoichiometric factors of chemical reactions. It can be  done by hand, even for rather complicated reactions. The main trick, which I call \emph{kernel pivotal condensation} (ker pc), to calculate the kernel of a matrix might be of independent interest. The discussion is elaborated for matrices with entries in a principal ideal domain $R$. The ker pc calculates a basis with coefficients in $R$ for the kernel of a matrix, seen as a $Q$-vector space, where $Q$ is the quotient field of $R$.
If $W$ is a free saturated $R$-submodule of $R^n$ I address the question how to modify an $R$-basis of the $Q$-vector subspace $Q\otimes _R W$ over the quotient field $Q$ to obtain a basis of the $R$-module $W$.
I also indicate how one can solve inhomogeneous linear systems, invert matrices and determine the four subspaces using 
pivotal condensation. I formulate the balancing by inspection method that is widely used to reduce the size of a linear system arising in chemical balancing in mathematical language.
 \end{abstract}
\maketitle

\tableofcontents


\section{Introduction}
\label{sec:Intro}

While teaching the basic linear algebra course to chemical engineering students I noticed that for chemical balancing (see, e.g., \cite{Fink,whynots} and references therein) the naive methods, in particular balancing by inspection (see \cite[Subsection 13.2]{Fink}), acquired in the chemistry classes are for all practical purposes way more efficient than applying Gaussian elimination to the corresponding homogeneous linear system. 
For complicated reactions the naive methods can become cumbersome and require skill.
The question arose if there is a universal method that is relatively quick and safe for a human computer. I am presenting such a method that works over the integers. It is inspired by the pivotal condensation of Felice Chiò \cite{Chio} for calculating determinants of square matrices.

In chemical balancing it sometimes makes sense to balance chemical reactions that depend on a parameter, say $x$. This is for example the case if one has families of compounds. In this situation one has to do linear algebra over the gcd domain $R=\Z[x]$, which is not even a principal ideal domain (PID). The suggested algorithms for the kernel of a matrix are elaborated for the case when $R$ is a gcd domain (e.g. a PID). Readers unfamiliar with abstract algebra can assume  that the base ring is $R=\Z$ with quotient field $Q=\Q$. I emphasize that the pivotal condensation provides a basis for the kernel of a matrix with entries in $R$
that has coefficients in $R$ seen as a vector space over the quotient field $Q$. 
But if  the dimension of the kernel is $>1$ it may happen that the basis is not a basis of the kernel seen as a free $R$-module. This subtlety is typically swept under the rug in the literature on chemical balancing and for the convenience of the reader I spell out in the more mathematical Section \ref{sec:Rbases} how this can be handled in practical terms when $R$ is a PID.
 The only other work that I am aware of that views the chemical balancing problem as a system of linear Diophantine equations is \cite{Papp}. Their empirical analysis relies on computer implementations of certain algorithms. However, I am trying here to convince the reader that all practically relevant examples can be elaborated by a patient human on a sheet of paper.

The plan of the paper is as follows. In Section \ref{sec:PC} I review the ideas of pivotal condensation. 
In Subsection \ref{subsec:Chio} I recall the pivotal condensation method to calculate the determinant $ |A|$ of a square matrix $A\in R^{n\times n}$. In Subsection \ref{subsec:ker} I propose a variation of pivotal condensation that allows to re-purpose parts the findings of \ref{subsec:Chio} to determine a basis for the kernel of the transposed matrix $A^{T}$ in case $A$ turns out to be singular.
For non-square matrices $A^T$ I simplify the scheme notationally and calculate merely $\ker\left( A\right)$. In Subsection \ref{subsec:hunt} I propose consistency checks that can be utilized to localize computational errors. With this preparation we are ready to turn in Section \ref{sec:CB} to chemical balancing and work out some examples. In Section \ref{sec:quivered} I formulate the balancing by inspection method in mathematical language. In Section \ref{sec:misc} I show how to solve inhomogeneous systems, invert matrices and determine the four subspaces 
using pivotal condensation.
In the Section \ref{sec:proofs} I give a 
formal justification of the pivotal condensation scheme. 
In Section \ref{sec:Rbases} I review how $R$-module bases can be calculated by hand. This includes a discussion of the saturation of an $R$-module, a practical guide to Smith normal form and a quick method to post-process the result of the pivotal condensation
the obtain an $R$-module basis for the kernel.

The practitioners among the readership may focus on Sections  \ref{subsec:hunt}, \ref{sec:CB} and, possibly, \ref{sec:misc} and ignore the rest of the paper. To do kernel pivotal condensation (ker pc) and, for example, matrix inversion with ker pc, by hand requires only the knowledge of arithmetic acquired in 5th grade and some patience. One does not need to use the calculus of fractions.  To have a completely satisfactory answer the results of ker pc have sometimes to be post-processed by a Smith normal form calculation (see Subsection \ref{subsec:Rbasis}). This requires division with remainder. I give a detailed account of the example calculations for those readers with a moderate mathematical background who just want to learn the method.

\hspace{2mm}

\noindent\textbf{Acknowledgements.} I would like thank the undergraduate crowd at the Escola de Química da Universidade Federal do Rio de Janeiro for helping me to think about the matter, mostly by not paying attention to what I said. The paper is an attempt towards a more just and equitable chemistry. Let us bring the wonders of stoichiometry even to the remotest places of the Amazon rain forest without polluting them with consumer electronics. Those who have seen the movie \emph{The Great Race} know that `Push the button, Max!' is an elitist and dangerous attitude. I would like to thank Benjamin Briggs for pointing out to me the notion of saturatedness. The whole project was triggered by a chat \cite{Grossman} that I had with Ben Grossman on \texttt{stackexchange.com}.
Thanks to Daniel Herden and Chris Seaton for saving me from embarrassments and Ihsen Yengui for a clarification.

\hspace{2mm}

\noindent\textbf{Statements and Declarations.} The author declares that he has no conflict of interest. He is a member of the \emph{Centro Brasileiro de Geometria} of the \emph{Programa Institutos Nacionais de Ciência e Tecnologia (INCT)}. 

\section{Pivotal condensation}
\label{sec:PC}
Chiò's \emph{pivotal condensation (pc)} is based on the following determinantal identity (for some historical account of the subject see \cite{Abeles}). 
\begin{theorem} Let $n\geq 2$ be an integer, $R$ an integral domain with quotient field $Q$ and $A=[a_{ij}]_{i,j=1,\dots,n}\in R^{n\times n}$ with $a_{11}\neq 0$. Then
\begin{align}\label{eq:Chio}
|A|=\frac{1}{a_{11}^{n-2}}\begin{vmatrix}
\begin{vmatrix}
a_{11} & a_{12}\\
a_{21} & a_{22}
\end{vmatrix} & \begin{vmatrix}
a_{11} & a_{13}\\
a_{21} & a_{23}
\end{vmatrix} &\cdots  & \begin{vmatrix}
a_{11} & a_{1n}\\
a_{21} & a_{2n}
\end{vmatrix}\\
\addstackgap{
\begin{vmatrix}
a_{11} & a_{12}\\
a_{31} & a_{32}
\end{vmatrix}} & \begin{vmatrix}
a_{11} & a_{13}\\
a_{31} & a_{33}
\end{vmatrix} &  & \begin{vmatrix}
a_{11} & a_{1n}\\
a_{31} & a_{3n}
\end{vmatrix}\\
\cdots &  &  & \cdots\\
\begin{vmatrix}
a_{11} & a_{12}\\
a_{n2} & a_{n2}
\end{vmatrix} & \begin{vmatrix}
a_{11} & a_{13}\\
a_{n2} & a_{n3}
\end{vmatrix} & \cdots & \begin{vmatrix}
a_{11} & a_{1n}\\
a_{n2} & a_{nn}
\end{vmatrix}
\end{vmatrix}.
\end{align}
\end{theorem}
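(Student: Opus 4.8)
The plan is to produce the matrix of $2\times 2$ minors on the right-hand side of \eqref{eq:Chio} by a sequence of elementary row operations on $A$, and to bookkeep the effect of these operations on $|A|$. The key observation is that the $(i,j)$ entry of the displayed matrix, namely $a_{11}a_{ij}-a_{1j}a_{i1}$, is precisely the entry one obtains in position $(i,j)$ from the combination $a_{11}R_i - a_{i1}R_1$, where $R_k$ denotes the $k$-th row of $A$. This suggests performing, for each $i=2,\dots,n$, the operation that replaces $R_i$ by $a_{11}R_i - a_{i1}R_1$.

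First I would carry out these $n-1$ operations and call the resulting matrix $A'$. By construction the $(i,1)$ entry of $A'$ for $i\geq 2$ equals $a_{11}a_{i1}-a_{i1}a_{11}=0$, so the first column of $A'$ has $a_{11}$ in the top-left position and zeros below it; moreover the lower-right $(n-1)\times(n-1)$ block of $A'$ is exactly the matrix of minors whose determinant appears in \eqref{eq:Chio}, which I denote $M$. Expanding $\det A'$ along the first column (equivalently, invoking the block-triangular determinant formula) then gives $\det A' = a_{11}\,|M|$.

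Next I would track how $\det A'$ relates to $|A|$. Each operation decomposes as scaling $R_i$ by $a_{11}$, which multiplies the determinant by $a_{11}$, followed by adding a multiple of $R_1$, which leaves the determinant unchanged; since these rules for the behaviour of determinants under row operations are valid over an arbitrary commutative ring, performing the operations for all $i=2,\dots,n$ yields $\det A' = a_{11}^{\,n-1}|A|$. Combining the two expressions for $\det A'$ gives the identity $a_{11}^{\,n-1}|A| = a_{11}\,|M|$.

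The last step, and the only place where the hypotheses genuinely enter, is the cancellation of one factor of $a_{11}$. Since $R$ is an integral domain and $a_{11}\neq 0$, I may cancel to obtain $a_{11}^{\,n-2}|A| = |M|$ as an identity in $R$, and then divide by $a_{11}^{\,n-2}$ in the quotient field $Q$ to recover \eqref{eq:Chio}. I expect this cancellation to be the only subtle point: the intermediate relation lives in $R$ rather than $Q$, so one cannot simply divide, and it is exactly the integral-domain hypothesis together with $a_{11}\neq 0$ that licenses removing the spurious factor of $a_{11}$. If one prefers to sidestep cancellation, the entire computation can be run with the $a_{ij}$ treated as indeterminates over $\Z$, where $a_{11}^{\,n-2}|A| = |M|$ becomes a polynomial identity in $\Z[a_{ij}]$ that then specializes to $R$ under the evaluation homomorphism.
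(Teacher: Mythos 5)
Your proof is correct, but it follows a genuinely different route from the paper's. The paper reproduces an argument (from Abeles) based on Sylvester's block factorization
\begin{align*}
A=\begin{bmatrix}
A_{11} & A_{12}\\
A_{21} & A_{22}
\end{bmatrix} =\begin{bmatrix}
A_{11} & \boldsymbol{0}\\
A_{21} & \boldsymbol{1}
\end{bmatrix}\begin{bmatrix}
\boldsymbol{1} & A_{11}^{-1} A_{12}\\
\boldsymbol{0} & A_{22} -A_{21} A_{11}^{-1} A_{12}
\end{bmatrix},
\end{align*}
which after taking determinants yields the Schur-complement identity $|A|=|A_{11}|\,|A_{22}-A_{21}A_{11}^{-1}A_{12}|$ and then specializes to a $1\times 1$ pivot block $A_{11}=a_{11}$; this computation takes place in $Q$, where $a_{11}$ is invertible, and its payoff is generality: it proves pivotal condensation with respect to an arbitrary invertible $k\times k$ pivot block, not just a single entry. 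Your argument is instead the classical row-operation proof (which the paper cites from Eves but deliberately does not reproduce): replacing $R_i$ by $a_{11}R_i-a_{i1}R_1$ for $i=2,\dots,n$, comparing the two evaluations $\det A'=a_{11}|M|$ and $\det A'=a_{11}^{\,n-1}|A|$, and cancelling one factor of $a_{11}$. What your route buys is that it is more elementary — only multilinearity of the determinant and cofactor expansion are used — and that the intermediate identity $a_{11}^{\,n-1}|A|=a_{11}|M|$ holds over any commutative ring, so you isolate precisely where the integral-domain hypothesis enters (the cancellation step); your remark that one can instead work with indeterminates in $\Z[a_{ij}]$ and specialize is a clean way to dispense with even that. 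What it does not give you is the block-pivot generalization, which is the reason the paper prefers the Sylvester decomposition.
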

Formally this expression makes only sense in $Q$, but of course $|A|\in R$. A straight forward proof of Equation \eqref{eq:Chio} in terms of row operations can be found in \cite[Paragraph 3.6.1]{Eves}. I am reproducing here an argument from \cite{Abeles} generalizing Equation \eqref{eq:Chio}.
The generalization is based on a block matrix decomposition of Sylvester:
\begin{align*}
    A=\begin{bmatrix}
    A_{11} & A_{12}\\
A_{21} & A_{22}
\end{bmatrix} =\begin{bmatrix}
A_{11} & \boldsymbol{0}\\
A_{21} & \boldsymbol{1}
\end{bmatrix}\begin{bmatrix}
\boldsymbol{1} & A_{11}^{-1} A_{12}\\
\boldsymbol{0} & A_{22} -A_{21} A_{11}^{-1} A_{12}
\end{bmatrix}.
\end{align*}
Here $n=k+l$, $A_{11}\in R^{k\times k}$ is assumed to be invertible, $A_{12}\in R^{k\times l}$, $A_{21}\in R^{l\times k}$ and $A_{22}\in R^{l\times l}$.
Taking determinants we deduce $|A|=\left|A_{11}\right|\left|  A_{22} -A_{21} A_{11}^{-1} A_{12}\right|$. For $k=1$ this means $|A|=A_{11}^{-n+2}\left|  A_{11}A_{22} -A_{21} A_{12}\right|$ since $A_{11}$ is central, proving \eqref{eq:Chio}.
\section{Determinant}
\label{subsec:Chio}

The pc for evaluating determinants, referred to as \emph{Chiò's pc}, consists of repeated applications of the following loop in order to reduce the size of the determinant.

\noindent\textbf{Step 1 (preparation).} If $a_{11} =0$ search for an $i$ with $a_{i1} \neq 0$. Switch the $i$th row and the $1$st row  and keep track of the sign. If $a_{i1} =0$ for all $i=1,\dotsc ,n$ then stop; in fact, $|A|=0$.

\noindent\textbf{Step 2 (pc).} Apply formula \eqref{eq:Chio} to write $|A|$ as a scalar multiple of a $( n-1) \times ( n-1)$-determinant $|A'|$.

\noindent\textbf{Step 3 (cleaning up).} Pull out of the determinant $|A'|$ common factors of rows or columns (step 3 is not compulsory). 

Continue the process recursively with the $(n-1) \times (n-1)$-determinant etc., i.e., apply repeatedly Steps 1,2,3, until obtaining a $2\times 2$-determinant, which
 in turn is to be evaluated. For clarity I put in Step 1 a box around the pivot $a_{i1}$. 

\begin{example}
To give an example, let us calculate the area $\mathcal A$ of the triangle with the sides $2,3,4$ using the Cayley-Menger determinant
(see, e.g., \cite[Appendix A.1]{Fiedler} )
\begin{align*}
\begin{vmatrix}
0 & 1 & 1 & 1\\
1 & 0 & 2^2 & 3^{3}\\
1 & 2^{2} & 0 & 4^{2}\\
1 & 3^{2} & 4^{2} & 0
\end{vmatrix}\overset{\text{Step 1}}{=} -\begin{vmatrix}
\boxed{1} & 9 & 16 & 0\\
1 & 0 & 4 & 9\\
1 & 4 & 0 & 16\\
0 & 1 & 1 & 1
\end{vmatrix}\overset{\text{Step 2}}{=} -\frac{1}{1^{4-2}}\begin{vmatrix}
-9 & -12 & 9\\
-5 & -16 & 16\\
1 & 1 & 1
\end{vmatrix}\overset{\text{Step 3}}{=} -3\begin{vmatrix}
\boxed{-3} & -4 & 3\\
-5 & -16 & 16\\
1 & 1 & 1
\end{vmatrix}\\
\overset{\text{Step 2}}{=} -\frac{3}{( -3)^{3-2}}\begin{vmatrix}
28 & -33\\
1 & -6
\end{vmatrix}\overset{\text{Step 3}}{=} 3\begin{vmatrix}
28 & -11\\
1 & -2
\end{vmatrix} =3( -45) =-135=-( 2!)^2 2^2{\mathcal A}^{2}.
\end{align*}
Here the ring $R$ can be taken to be the integers $\Z$.
We conclude that $ \mathcal A=\sqrt{135}/4$.
\end{example}
The scheme provides a compact and safe means to evaluate determinants by hand. Typically the symmetry of the matrix can be preserved in the process, cutting the workload almost in half. It combines excellently with Cramer's rule as most of the minors that appear in the denominator show up in the numerator as well.

\section{Kernel}
\label{sec:kernel}
When one is confronted with a quadratic matrix $A\in R^{n\times n}$ it is oftentimes unclear if it is regular or singular. In the latter case one is possibly expected to work out the kernel $\ker_Q(A) $ as well. Throughout the paper we use the notation
\begin{align*}
\ker_R(A)&=\{\vec v\in R^n\ | \ A\vec v=\vec 0\},  \ \ \ \operatorname{im}_R(A)=\{A\vec v\in R^n\ | \vec v\in R^m\}=AR^n,\\
    \ker_Q(A)&=\{\vec v\in Q^n\ | \ A\vec v=\vec 0\}\simeq Q\otimes_R \ker_R(A),  \ \ \ \operatorname{im}_Q(A)=\{A\vec v\in Q^n\ | \vec v\in Q^m\}=AQ^n.
\end{align*}
I present here a version of the pc, referred to as the \emph{detker pc}, that allows to re-purpose findings of Chiò's pc for $|A^T|$ to determine the kernel $\ker_Q(A)$ in case the matrix $A$ turns out to be singular.

At the start of the $(l+1)$st condensation, $l=0,1,\dots,\Rank(A)-2$, one has the pattern 
\begin{align*}
\Delta_l | X_l |Y_l, 
\end{align*}
where $\Delta_l\in Q$, $X_l\in R^{(n-l)\times(n-l)}$ and $Y_l\in R^{(n-l)\times n}$. Here $\Delta_l$ is the product of all the prefactors occuring in the previous steps of the pc, i.e., the signs arising from switching rows, the fractions from Equation \eqref{eq:Chio}, and the common factors pulled out from rows of the matrix. The matrix $X_l$ is the one arising at Chiò's $l$th condensation. The matrix $Y_l$ is capturing the relevant information of the elementary row operations that are involved. At the start of the recursion we put $\Delta_0=1$, $X_0:=A^T$ and $Y_0:=\boldsymbol 1_n$. At the $(l+1)$st condensation the procedure consists of the following.

\noindent\textbf{Step 1 (preparation).} If $X_l=\boldsymbol 0$ then stop; $\Rank(A^T)=l$ and $\ker_Q(A^T)=Y_l^TR^{n-l}$.  Let $k_{l+1}+1$ be the smallest column index of a nonzero column in $X_l$. This means that the first $k_{l+1}$ columns of $X$ are zero. Pick a row index $i$ with $x_{ik_{l+1}}\neq 0$ and replace $\Delta_l\mapsto \Delta_l'=(-1)^{|\sigma|}\Delta_l$, $X_l\mapsto X_l'$, $Y_l\mapsto Y_l'$ where $X_l'$ and $Y_l'$ are obtained from $X_l$ and $Y_l$, respectively, by applying a row permutation $\sigma$ to move the $i$th row to the first row. The resulting pattern is $\Delta_l' | X_l' |Y_l'$. For clarity I put a box around the pivot $x_{ik_{l+1}}$.

\noindent\textbf{Step 2 (pc).} The matrix $Z_l'':=[X_l'' |Y_l'']$  is obtained from $Z_l':=[X_l' |Y_l']$ as follows.
The entry of $Z_{l}''\in R^{( n-l-k_l-1) \times ( 2n-l-1)}$ at the $r$th row and $s$th column is 
\begin{align}
\label{eq:zij}
\begin{vmatrix}
z_{1\, k_{l+1}+1} & z_{1\, k_{l+1}+s}\\
z_{r+1\, k_{l+1}+1} & z_{r+1\, k_{l+1}+s}
\end{vmatrix}, 
\end{align}
where $Z_l'=:[z_{ij}]_{\substack{
i=1,\dotsc ,n-l\\
j=1,\dotsc ,2n-l}}$. The resulting pattern is $\Delta_l''| X_l'' |Y_l''$  with $\Delta_l'':=\Delta_l'z_{1\, k_{l+1}+1}^{-m+l+1}$.

\noindent\textbf{Step 3 (cleaning up).}\footnote{This is optional up to the last iteration.} Let $L_i$ be the $i$th row of $Z_l'':=[X_l'' |Y_l'']$ and $c_i$ be the gcd of the entries of $L_i$ if $L_i$ is nonzero and put $c_i=1$  otherwise. Note that for a general integral domain $R$ the existence of the gcd is not guaranteed. If $R$ is a PID it can be calculated using Euclid's algorithm. Let $Z_{l+1}:=[X_{l+1} |Y_{l+1}]$ be the matrix whose rows are $L_i/c_i$ and put $\Delta_{l+1}:=\Delta_{l}''\prod_i c_i$. 
The resulting pattern is $\Delta_{l+1} | X_{l+1} |Y_{l+1}$, completing the $(l+1)$th condensation.

For each $0\le l\le n$ we have $|A|=\Delta_{l}|X_{l}|$. If $X_{n}\in R$ is nonzero then $\ker_Q(A^T)$ is trivial. In practical terms it seems reasonable to work out the $\Delta_l | X_l |$'s first and leave some blank space to fill in the $Y_l$'s later in case the matrix turns out to be singular.

\begin{example} Let $R=\Z$ and consider the following pc
\begin{align*}
&\begin{array}{ c|c c c c|c c c c }
 & \boxed{1} & 2 & 3 & 4 & 1 & 0 & 0 & 0\\
 & 5 & 6 & 7 & 8 & 0 & 1 & 0 & 0\\
 & 9 & 10 & 11 & 12 & 0 & 0 & 1 & 0\\
1 & 13 & 14 & 15 & 16 & 0 & 0 & 0 & 1
\end{array} \mapsto \begin{array}{ c|c c c|c c c c }
 & \boxed{-4} & -8 & -12 & -5 & 1 & 0 & 0\\
 & -8 & -16 & -24 & -9 & 0 & 1 & 0\\
1 & -12 & -24 & -36 & -13 & 0 & 0 & 1
\end{array}\\
&\mapsto \begin{array}{ c|c c|c c c c }
 & 0 & 0 & -4 & 8 & -4 & 0\\
-1/4 & 0 & 0 & -8 & 12 & 0 & -4
\end{array} \mapsto \begin{array}{ c|c c|c c c c }
 & 0 & 0 & -1 & 2 & -1 & 0\\
-1/4 & 0 & 0 & -2 & 3 & 0 & -1
\end{array}.
\end{align*}
It shows that $\ker_\Q\left(\begin{bmatrix}
1 & 5 & 9 & 13\\
2 & 6 & 10 & 14\\
3 & 7 & 11 & 15\\
4 & 8 & 12 & 16
\end{bmatrix}\right) =\mathbb{Q}\begin{bmatrix}
-1\\
2\\
-1\\
0
\end{bmatrix} \oplus \mathbb{Q}\begin{bmatrix}
-2\\
3\\
0\\
-1
\end{bmatrix}$, which by accident coincides with the kernel of the transposed matrix. As the gcd $\Delta_2$ of the $2\times 2$-minors of 
\begin{align*}
\begin{bmatrix}
   -1&-2\\
2&3\\
-1&0\\
0 &-1
\end{bmatrix}
\end{align*}
is a unit in $\Z$ the above generators also form a basis of the $\Z$-module $\ker_\Z(A)$ (see Theorem \ref{thm:characterization}).
\end{example}
\begin{example} Let $R$ be the ring of convergent power series $\C\{z\}$ in the variable $z$. The following pc
\begin{align*}
&    \begin{array}{ c|c c c|c c c }
 & \boxed{1} & \cos( z) & \cos( 2z) & 1 & 0 & 0\\
 & \cos( z) & 1 & \cos( z) &  0& 1 & 0\\
1 & \cos( 2z) & \cos( z) & 1 & 0 & 0 & 1
\end{array}\\
&\mapsto \begin{array}{ c|c c|c c c c }
 & \sin^{2}( z) & \cos( z)( 1-\cos( 2z)) & -\cos( z) & 1 & 0 & 0\\
1 & \cos( z)( 1-\cos( 2z)) & \sin^{2}( 2z) & -\cos( 2z) & 0 & 1 & 1
\end{array}\\
&=\begin{array}{ c|c c|c c c }
 & \boxed{\sin^{2}( z)} & 2\cos( z)\sin^{2}( z) & -\cos( z) & 1 & 0\\
1 & 2\cos( z)\sin^{2}( z) & 4\sin^{2}( z)\cos^{2}( z) & -\cos( 2z) & 0 & 1
\end{array}\mapsto\\
& \begin{array}{ c|c|c c c }
\sin^{-2}( z) & 4\sin^{4}( z)\cos^{2}( z) -4\sin^{4}( z)\cos^{2}( z) & -\sin^{2}( z)\cos( 2z) +2\cos^{2}( z)\sin^{2}( z) & -2\cos( z)\sin^{2}( z) & \sin^{2}( z)
\end{array}\\
&=\begin{array}{ c|c|c c c }
1 & 0 & 1+\sin^{2}( z) & -2\cos( z) & 1
\end{array}
\end{align*}
shows that $\ker_{\C\{z\}}\left(\begin{bmatrix}
1 & \cos( z) & \cos( 2z)\\
\cos( z) & 1 & \cos( z)\\
\cos( 2z) & \cos( z) & 1
\end{bmatrix}\right) =\C\{z\}\begin{bmatrix}
1+\sin^{2}( z)\\
-2\cos( z)\\
1
\end{bmatrix}\subset \C\{z\}^3$. Here we use the fact that since
$\C\{z\}$ is a PID our vector generates $\ker_{\C\{z\}}$ if and only if it generates $\ker_Q$, where $Q$ is the field of convergent Laurent series (see Remark \ref{rem:1dim}).
\end{example}

If the matrix $A^T\in R^{n\times m}$ is not quadratic I suggest to apply the same procedure to calculate $\ker_Q(A)$ while not annotating $\Delta_l$ and the first $|$. The scheme is referred to as the \emph{ker pc}.  The input of the $(l+1)$th condensation, $l=0,1,\dots,\Rank(A)-2$, one is the pattern 
\begin{align*}
X_l |Y_l, 
\end{align*}
where $X_l\in R^{(m-l)\times(n-l)}$ and $Y_l\in R^{(n-l)\times n}$. At the start of the recursion we put, $X_0:=A^T$ and $Y_0:=\boldsymbol 1_n$. The $(l+1)$th condensation consists of the following procedure.

\noindent\textbf{Step 1 (preparation).} 
If $X_l=\boldsymbol 0$ or empty then stop; $\Rank(A^T)=l$ and $\ker_Q(A^T)=Y_l^TR^{n-l}$.  Let $k_{l+1}+1$ be the smallest column index of a nonzero column in $X_l$. This means that the first $k_{l+1}$ columns of $X$ are zero. Pick a row index $i$ with $x_{ik_{l+1}}\neq 0$ and replace  $X_l\mapsto X_l'$, $Y_l\mapsto Y_l'$, where $X_l'$ and $Y_l'$ are obtained from $X_l$ and $Y_l$, respectively, by applying a row permutation $\sigma$ to move the $i$th row to the first row. For clarity I put a box around the pivot $x_{ik_{l+1}}$.

\noindent\textbf{Step 2 (pc).} 
 The matrix $Z_l'':=[X_l'' |Y_l'']$  is obtained from $Z_l':=[X_l' |Y_l']$ as follows.
The entry of $Z_{l}''\in R^{( m-l-k_l-1) \times ( n+m-l-1)}$ at the $r$th row and $s$th column is 
\begin{align}
\label{eq:zij}
\begin{vmatrix}
z_{1\, k_{l+1}+1} & z_{1\, k_{l+1}+s}\\
z_{r+1\, k_{l+1}+1} & z_{r+1\, k_{l+1}+s}
\end{vmatrix}, 
\end{align}
where $Z_l'=:[z_{ij}]_{\substack{
i=1,\dotsc ,n-l\\
j=1,\dotsc ,2n-l}}$. The resulting pattern is $X_l'' |Y_l''$.

\noindent\textbf{Step 3 (cleaning up).}\footnote{This is optional up to the last iteration.} Let $L_i$ be the $i$th row of $Z_l'':=[X_l'' |Y_l'']$ and $c_i$ be the gcd of the entries of $L_i$ if $L_i$ is nonzero and put $c_i=1$  otherwise.  Let $Z_{l+1}:=[X_{l+1} |Y_{l+1}]$ be the matrix whose rows are $L_i/c_i$. 
The resulting pattern is $X_{l+1} |Y_{l+1}$, completing the $l$th condensation.

If $X_{m}$ is neither zero nor empty then $\ker_Q(A^T)$ is trivial. Let us look into two examples; more examples are elaborated in Section \ref{sec:CB}.

\begin{example}
Let $ R$ be the polynomial ring $\mathbb{Z}[ x,y,z]$ with quotient field $\mathbb{Q}( x,y,z)$ the field of rational functions in $x,y,z$ and coefficients in $\Q$. From the pc
\begin{align*}
&\begin{array}{ c c c|c c c c }
\boxed{1} & 1 & 1 & 1 & 0 & 0 & 0\\
x & y & z & 0 & 1 & 0 & 0\\
x^{2} & y^{2} & z^{2} & 0 & 0 & 1 & 0\\
x^{3} & y^{3} & z^{3} & 0 & 0 & 0 & 1
\end{array} \mapsto \begin{array}{ c c|c c c c }
\boxed{y-x} & z-x & -x & 1 & 0 & 0\\
y^{2} -x^{2} & z^{2} -x^{2} & -x^{2} & 0 & 1 & 0\\
y^{3} -x^{3} & z^{3} -x^{3} & -x^{3} & 0 & 0 & 1
\end{array}\\
&\mapsto \begin{array}{ c|c c c c }
( y-x)( z-x)( z-y) & ( y-x) xy & -\left( y^{2} -x^{2}\right) & y-x & 0\\
( y-x)( z-x)( z-y)( x+y+z) & \left( y^{2} -x^{2}\right) xy & -\left( y^{3} -x^{3}\right) & 0 & y-x
\end{array}\\
&=\begin{array}{ c|c c c c }
\boxed{( z-x)( z-y) }& xy & -y- x & 1 & 0\\
( z-x)( z-y)( x+y+z) & ( y+x) xy & -y^{2} -xy-x^{2} & 0 & 1
\end{array}\\
&\mapsto \begin{array}{ c|c c c c }
 & -( z-x)( z-y) xyz & ( z-x)( z-y)( xy+yz+xz) & -( z-x)( z-y)( x+y+z) & ( z-x)( z-y)
\end{array}\\
&=\begin{array}{ c|c c c c }
 & -xyz & xy+yz+xz & -x-y-z & 1
\end{array}
\end{align*}
we see that $\ker_{\mathbb{Q}( x,y,z)}\left(\begin{bmatrix}
1 & x & x^{2} & x^{3}\\
1 & y & y^{2} & y^{3}\\
1 & z & z^{2} & z^{3}
\end{bmatrix}\right) =\mathbb{Q}( x,y,z)\begin{bmatrix}
-xyz\\
xy+yz+xz\\
-x-y-z\\
1
\end{bmatrix} \subset \mathbb{Q}(x,y,z)^{4}$.
This is a special case of the identities $\sum _{i=0}^{n}( -x_{j})^{n-i} e_{i}( x_{1} ,\dotsc ,x_{n}) =0$ for elementary symmetric functions $e_{i}( x_{1} ,\dotsc ,x_{n})$, which can in turn easily be checked from the generating function $\prod _{i=1}^{n}( y-x_{i}) =\sum _{i=0}^{n}( -1)^{i} y^{n-i} e_{i}( x_{1} ,\dotsc ,x_{n})$.
I would like to point out some subtleties here. The ring $ R=\mathbb{Z}[ x,y,z]$ is not a PID and it is not guaranteed that the method produces a generating set for the whole kernel as an $R$-module. It turns out that for this it is sufficient to have a block of a permutation matrix in the matrix whose columns are our generators (more on this in Section \ref{sec:Rbases}). In the example at hand this is just the $1$ in the last entry of the generator.
\end{example}

\begin{example}
    To see that something can go wrong when $R$ is not a PID let us look into an example from physics. Let us consider $R=\mathbb R[ q_{1} ,q_{2} ,q_{3} ,p_{1} ,p_{2} ,p_{3}]$ and define 
    \begin{align*}
    \vec{J} =\vec{q} \times \vec{p} =\begin{bmatrix}
q_{2} p_{3} -q_{3} p_{2}\\
q_{3} p_{1} -q_{1} p_{3}\\
q_{1} p_{2} -q_{2} p_{1}
\end{bmatrix}.
\end{align*}
In physics the vectors $\vec q=\begin{bmatrix}
    q_1&q_2&q_3
\end{bmatrix}^T, \vec p=\begin{bmatrix}
    p_1&p_2&p_3
\end{bmatrix}^T$ and $\vec J$ are called position, momentum and angular momentum, respectively. Let us apply ker pc to the $1\times 3$-matrix $\vec{J}^{T}$:
\begin{align*}
\begin{array}{ c|c c c }
\boxed{J_{1}} & 1 & 0 & 0\\
J_{2} & 0 & 1 & 0\\
J_{3} & 0 & 0 & 1
\end{array} \mapsto \begin{array}{|c c c }
-J_{2} & J_{1} & 0\\
-J_{3} & 0 & J_{1}
\end{array}.
\end{align*}
The vectors $\vec v_1:=\begin{bmatrix}
-J_{2}\\
J_{1}\\
0
\end{bmatrix} ,\ \vec v_2:=\begin{bmatrix}
-J_{3}\\
0\\
J_{1}
\end{bmatrix} $ generate a rank two free submodule of $\ker_R\left(\vec{J}^{T}\right)$. But evidently $\vec{q}$ and $\vec{p}$ generate another rank two free submodule of $\ker_R\left(\vec{J}^{T}\right)$, and for degree reasons $\vec{q}$, $\vec{p}$ cannot be in $R\vec v_1+R\vec v_2$. People typically employ Gröbner bases for determining such kernels (see e.g. \cite{Greuel}). We will not discuss this topic here.
\end{example}

\begin{remark}
In this paper, the focus is on the case when $R=\Z$. Let me make some observations that arise when comparing the ker pc to Gaussian elimination of $A\in\Z^{n\times m}$:
\begin{enumerate}
    \item The ker pc of $[A^T|\boldsymbol 1_m]$, being equivalent to the first half of Gaussian elimination, requires only half as many steps as the Gaussian elimination of $A$ for determining $\ker_Q(A)$. On the other hand, $[A^T|\boldsymbol 1_m]$ is considerably bigger than $A$.
    \item In ker pc one can effortlessly avoid fractions\footnote{According to L.~Kronecker rational numbers are \emph{Menschenwerk} and require as such a lot of attention, i.e., brain capacity.}.
    \item  During pc the size of the matrices shrinks step-wise.
    \item When organized properly in situations such as chemical balancing the first steps of the ker pc are typically almost trivial and can be done quickly.
    \item The only minor drawback of the ker pc that I see is that for the determination of the image $A\Q^m$ one has to keep track of the permutations of the rows (see Subsection \ref{subsec:4pc}). This is irrelevant for chemical balancing.
    \item There is also a pc version of the Gaussian elimination of $A$, but the 2nd half of the process has more tricky signs and the notational protocols are less compact.
\end{enumerate}
\end{remark}

\label{subsec:ker}
\section{Consistency checks}
\label{subsec:hunt}

 A simple method to spot errors are check sums for rows\footnote{I learnt this from my dad.}. Here we put at the end of the pattern of ker  pc a double bar and another column that records the row sums. This column is included in the condensation procedure. If no error occurred the row sums have to match with the last column at each condensation step. For example,
\begin{align*}
&\begin{array}{ c c c|c c c c||c }
1 & 1 & 1 & 1 & 0 &0  & 0 & 4\\
1 & 2 & 3 & 0 & 1 & 0 & 0 & 7\\
1 & 3 & 6 & 0 & 0 & 1 & 0 & 11\\
1 & 4 & 10 & 0 & 0 & 0 & 1 & 16
\end{array} \mapsto \begin{array}{ c c|c c c c||c }
1 & 2 & -1 & 1 & 0 & 0 & 3\\
2 & 5 & -1 & 0 & 1 & 0 & 7\\
3 & 9 & -1 & 0 & 0 & 1 & 12
\end{array} \mapsto \begin{array}{ c|c c c c||c }
1 & 1 & -2 & 1 & 0 & 1\\
3 & 2 & -3 & 0 & 1 & 3
\end{array}\\
&\mapsto \begin{array}{ c|c c c c||c }
 & -1 & 3 & -3 & 1 & 0
\end{array}
\end{align*}
probably calculates $\ker_\Q\left( \begin{bmatrix}
1 & 1 & 1 & 1\\
1 & 2 & 3 & 4\\
1 & 3 & 6 & 10
\end{bmatrix}\right) =\mathbb{Q}\begin{bmatrix}
-1\\
3\\
-3\\
1
\end{bmatrix}$ correctly as the check sums match at each step.

If the errors persist one can also use check sums for columns. Here one has to be careful however: rescaling individual rows (cf. Step 3) messes up the check sums.

\section{Chemical balancing}
\label{sec:CB}
Before we take a closer look into mathematical stoichiometry (originating from \cite{Richter}) let me make a suggestion how to annotate the calculations when using pen and paper.
The matrices in chemical balancing are typically sparse, which means one has to write a lot of zeros. I will suppress them in the calculations and therefore it seems appropriate to use checkered paper. Alternatively, one can put dots instead of $0$ to keep track of the grid.

In chemical balancing one has given two finite sets: the set of atoms $I$ and the set of compounds $J$ together with an adjacency matrix $A=[ a_{ij}]_{i\in I,j\in J} \in \mathbb{Z}^{I\times J}$ whose integer entries are given by the rule that $a_{ij}$ counts the number of occurences of atom $i$ in compound $j$. A vector in the $\mathbb{Z}$-module 
\begin{align*}
    \ker_\Q( A)=\{\vec v=[v_j]_{j\in J}\in \mathbb Z^J\mid \forall i\in I: \ \sum_{j\in J} a_{ij}v_j=0\} 
\end{align*}
is symbolizing a mathematically possible chemical reaction involving the compounds of $J$. 
For practical matters it is better to work with generators of  the  $\ker_\Q( A)$ instead of the infinite set. If $\dim(\ker_\Q( A))=1$ our generator is automatically a generator for $\ker_\Z( A)$. For more information on how to determine the generators of $\ker_\Z( A)$ see Section \ref{sec:Rbases}.
The sets $I$ and $J$ come with an arbitrary chosen total order. The ker pc computations depend on the orders of $I$ and $J$, some orders being more convenient than others. In elimination the order on $I$ is merely used for presenting $A$ as a table. 
Typically, the entries of $A$ are from $\mathbb{N} =\{0,1,2,\dotsc \}$. 
If the compounds are ions it is convenient to deal also with negative entries (see Example \ref{ex:ionic} below).
If $\dim(\ker_\Q( A)) =1$ the generator of the $\mathbb{Z}$-module $\ker_\Z( A)$ is unique up to multiplication with a unit $\mathbb{Z}^{\times } =\{\pm 1\}$. The two options correspond to the two theoretically possible directions of the chemical reaction. Which one is more likely to be realized in nature is determined by the chemistry.

More generally, the absolute values of the entries of the generators of $\ker_\Z( A)$ are called the \emph{stoichiometric factors}. Our convention is that negative entries symbolize the reactants and positive entries symbolize reaction products. Mathematically, the choice of the global factor from $\mathbb{Z}^{\times } =\{\pm 1\}$ in front of each generator is arbitrary.

Let us now look into some examples. The general principle is to apply ker pc to the pattern $A^T|\boldsymbol 1$, the matrix $A$ being the adjacency matrix of the reaction. To avoid mistakes it makes sense to arrange the compounds into two groups: the reactants and the reaction products, if this grouping is known beforehand. Which one comes first can be decided according to which pivot is more simple. The ordering of the compounds within each group is also to be decided. I find it convenient to order the atoms lexicographically and view the compounds as monomials in  the set of atoms, forgetting about indices (i.e. all exponents in the monomial are $0$ or $1$). 
The monomials I order lexicographically if I do not see advantages in another ordering.\footnote{This does not decide if $\mathrm{CO}$ comes before $\mathrm{CO_2}$. If you want a definite rule take your favourite term order (see, e.g., \cite{Greuel, Yengui}), for example.}
The choice of ordering is of course only a matter of taste.

\begin{example}
Not every reaction is feasible. For example there is no reaction possible among the compounds $\mathrm{FeS}, \mathrm{H_{2}SO_{4}}, \mathrm{FeSO_{4}} ,\mathrm{H_{2} O}$. The reason is that the kernel of the adjacency matrix 
\begin{align*}
 \begin{array}{ c|c c c c }
 & \mathrm{FeS} & \mathrm{H_{2} SO_{4}} & \mathrm{FeSO_{4}} & \mathrm{H_{2} O}\\
\hline
\mathrm{Fe} & 1 &  & 1 & \\
\mathrm{H} &  & 2 &  & 2\\
\mathrm{O} &  & 4 & 4 & 1\\
\mathrm{S} & 1 & 1 & 1 & 
\end{array}
\end{align*}
is trivial. Let us check this using ker pc
\begin{align*}
&\begin{array}{ c c c c|c c c c }
\boxed{1} &  &  & 1 & 1 &  &  & \\
 & 2 & 4 & 1 &  & 1 &  & \\
1 &  & 4 & 1 &  &  & 1 & \\
 & 2 & 1 &  &  &  &  & 1
\end{array} \mapsto \begin{array}{ c c c|c c c c }
\boxed{2} & 4 & 1 &  & 1 &  & \\
 & 4 &  & -1 &  & 1 & \\
2 & 1 &  &  &  &  & 1
\end{array} \mapsto \begin{array}{ c c|c c c c }
\boxed{8} &  & -2 &  & 2 & \\
-6 & -2 &  & -2 &  & 2
\end{array}\\
&\mapsto \begin{array}{ c|c c c c }
\underline{-16} & -12 & -16 & 12 & 16
\end{array}.
\end{align*}
There are not enough compounds in the list. Maybe one should add some rust.
\end{example}

\begin{example}
\label{ex:rost}
There can also be too many compounds. Then the balanced reaction is not unique up to sign. To see an example with two generators let us look into the adjacency matrix 
\begin{align*}
\begin{array}{ c|c c c c }
 & \mathrm{Fe} & \mathrm{O_{2}} & \mathrm{FeO} & \mathrm{Fe_{2} O_{3}}\\
\hline
\mathrm{Fe} & 1 &  & 1 & 2\\
\mathrm{O} &  & 2 & 1 & 3
\end{array}.
\end{align*}
The pivotal condensation gives
\begin{align*}
\begin{array}{ c c|c c c c }
\boxed{1} &  & 1 &  &  & \\
 & 2 &  & 1 &  & \\
1 & 1 &  &  & 1 & \\
2 & 3 &  &  &  & 1
\end{array} \mapsto \begin{array}{ c|c c c c }
\boxed{2} &  & 1 &  & \\
1 & -1 &  & 1 & \\
3 & -2 &  &  & 1
\end{array} \mapsto \begin{array}{|c c c c }
-2 & -1 & 2 & \\
-4 & -3 &  & 2
\end{array}.
\end{align*}
The reactions corresponding to the generators above are
$2\mathrm{Fe+O_{2}\rightarrow 2FeO}$ and $4\mathrm{Fe+3O_{2}\rightarrow 2Fe_{2} O_{3}}$. 

There is a catch here, however. The reader may readily check that the linear independent vectors 
\begin{align*}    
\begin{bmatrix}
-2\\
-1\\
2\\
0
\end{bmatrix}, \ \ \ \begin{bmatrix}
1\\
0\\
-3\\
1
\end{bmatrix}
\end{align*}
are in the kernel of the adjacency matrix. On the other hand, 
\begin{align*}\begin{bmatrix}
-2 & 1\\
-1 & 0\\
2 & -3\\
0 & 1
\end{bmatrix} =\begin{bmatrix}
-2 & -4\\
-1 & -3\\
2 & 0\\
0 & 2
\end{bmatrix}\begin{bmatrix}
1 & -3/2\\
0 & 1/2\\
\end{bmatrix}\end{align*}
is not representing an integer combination of the generators obtained from ker pc. 
Accordingly, a more fundamental set of generators for the chemical reactions is actually $2\mathrm{Fe+O_{2}\rightarrow 2FeO}$ and $3\mathrm{Fe0}\rightarrow \mathrm{Fe_{2} O_{3}+Fe}$. These resulting generators are unique up the action
of $\operatorname{GL_{2}}(\mathbb{Z})$.

In fact, the problem is visible in the gcd $\Delta _{2}$ of the $2\times 2$-minors of $X:=\begin{bmatrix}
-2 & -4\\
-1 & -3\\
2 & 0\\
0 & 2
\end{bmatrix}$, which is $2$. The image $\operatorname{im}_\Z( X) =X\mathbb{Z}^{2}$ of $X$ cannot be saturated (see Theorem \ref{thm:characterization}), and the columns of this matrix cannot form an $\mathbb{Z}$-basis for the kernel of $A$. If $R$ is a PID, $A\in R^{n\times m}$ is of rank $p$, $AX=0$ and $X$ has rank $n-p$, then $\ker_R( A) =\operatorname{im}_R( X)$ if and only if the gcd $\Delta _{n-p}$ of the $( n-p) \times ( n-p)$-minors is a unit. Sometimes ker pc produces generators whose $\Delta _{n-p}$ is not a unit. For details on how one can repair this see Section \ref{sec:Rbases}.

\end{example}

\begin{example} \label{ex:Fischer-Tropsch}
Out of a head of a chemist come mostly adjacency matrices that have just one chemical equation. For example,
the Fischer-Tropsch reaction for the synthesis of paraffin involves the following adjacency matrix 
\begin{align}\label{eq:FT}
\begin{array}{ c|c c c c }
 & \mathrm{CO} & \mathrm{H_{2}} & \mathrm{C_{n} H_{2n+2}} & \mathrm{H_{2} O}\\
\hline
\mathrm{C} & 1 &  & n & \\
\mathrm{H} &  & 2 & 2n+2 & 2\\
\mathrm{O} & 1 &  &  & 1
\end{array}.
\end{align}
We work symbolically, i.e., over the ring $\mathbb{Z}[ n]$. The ker pc is
\begin{align*}
&\begin{array}{ c c c|c c c c }
\boxed{1} &  & 1 & 1 &  &  & \\
 & 2 &  &  & 1 &  & \\
n & 2n+2 &  &  &  & 1 & \\
 & 2 & 1 &  &  &  & 1
\end{array} \mapsto \begin{array}{ c c|c c c c }
\boxed{2} &  &  & 1 &  & \\
2n+2 & -n & -n &  & 1 & \\
2 & 1 &  &  &  & 1
\end{array} 
\mapsto 
\begin{array}{ c|c c c c }
-2n & -2n & -2n-2 & 2 & \\
2 &  & -2 &  & 2
\end{array}\\
&=\begin{array}{ c|c c c c }
\boxed{n} & n & n+1 & -1 & \\
1 &  & -1 &  & 1
\end{array} \mapsto \begin{array}{ c|c c c c }
 & -n & -2n-1 & 1 & n
\end{array}
\end{align*}
and hence $n\mathrm{CO} +( 2n+1)\mathrm{H_{2}}\rightarrow \mathrm{C_{n} H_{2n+2}} +n\mathrm{H_{2} O}$. As $\begin{bmatrix}
 0& -n & -2n-1 & 1 & n
\end{bmatrix}^T$ has a unit entry it actually generates the kernel as a $\Z[n]$-module (see Section \ref{sec:Rbases})\footnote{The ring $\Z[n]$ is not a PID and hence Theorem \ref{thm:characterization} cannot be applied.}.
We will come back to this example in Section \ref{sec:quivered}.
\end{example}

\begin{example}
\label{ex:ionic}
In ionic reactions I treat the electric charge as an atom, as in the following example:
\begin{align*}
\begin{array}{ c|c c c c c c }
 & \mathrm{H_{2} O} & \mathrm{MnO_{4}^{-}} & \mathrm{SO_{3}^{2-}} & \mathrm{OH^{-}} & \mathrm{MnO_{2}} & \mathrm{SO_{4}^{2-}}\\
\hline
\mathrm{H} & 2 &  &  & 1 &  & \\
\mathrm{Mn} &  & 1 &  &  & 1 & \\
\mathrm{S} &  &  & 1 &  &  & 1\\
\mathrm{O} & 1 & 4 & 3 & 1 & 2 & 4\\
\mbox{charge} &  & -1 & -2 & -1 &  & -2
\end{array}
\end{align*}
If during ker pc in a row under the pivot there is a zero the corresponding row (with first zero deleted) is simply multiplied with the pivot.
This in turn is then divided out. I take the liberty to combine the two steps and simply copy over the row, omitting the first entry. This occurs in the first step of the following ker pc:
\begin{align*}
&\begin{array}{ c c c c c|c c c c c c }
\boxed{2} &  &  & 1 &  & 1 &  &  &  &  & \\
 & 1 &  & 4 & -1 &  & 1 &  &  &  & \\
 &  & 1 & 3 & -2 &  &  & 1 &  &  & \\
1 &  &  & 1 & -1 &  &  &  & 1 &  & \\
 & 1 &  & 2 &  &  &  &  &  & 1 & \\
 &  & 1 & 4 & -2 &  &  &  &  &  & 1
\end{array} \mapsto \begin{array}{ c c c c|c c c c c c }
\boxed{1} &  & 4 & -1 &  & 1 &  &  &  & \\
 & 1 & 3 & -2 &  &  & 1 &  &  & \\
 &  & 1 & -2 & -1 &  &  & 2 &  & \\
1 &  & 2 &  &  &  &  &  & 1 & \\
 & 1 & 4 & -2 &  &  &  &  &  & 1
\end{array}\\
&\mapsto \begin{array}{ c c c|c c c c c c }
\boxed{1} & 3 & -2 &  &  & 1 &  &  & \\
 & 1 & -2 & -1 &  &  & 2 &  & \\
 & -2 & 1 &  & -1 &  &  & 1 & \\
1 & 4 & -2 &  &  &  &  &  & 1
\end{array} \mapsto \begin{array}{ c c|c c c c c c }
\boxed{1} & -2 & -1 &  &  & 2 &  & \\
-2 & 1 &  & -1 &  &  & 1 & \\
1 &  &  &  & -1 &  &  & 1
\end{array}\\
&\mapsto \begin{array}{ c|c c c c c c }
\boxed{-3} & -2 & -1 &  & 4 & 1 & \\
2 & 1 &  & -1 & -2 &  & 1
\end{array} \mapsto \begin{array}{ c|c c c c c c }
 & 1 & 2 & 3 & -2 & -2 & -3
\end{array} \mapsto \begin{array}{ c|c c c c c c }
 & -1 & -2 & -3 & 2 & 2 & 3
\end{array}.
\end{align*}
The balanced reaction is $\mathrm{H_{2} O} +2\mathrm{MnO_{4}^{-}} +3\mathrm{SO_{3}^{2-}}\rightarrow 2\mathrm{OH^{-}} +2\mathrm{MnO_{2}} +3\mathrm{SO_{4}^{2-}}$.
\end{example}

\begin{example}
An important related question in the physical sciences is if it is possible to form a rational dimensionless quantity out of a finite set of physical quantities. Such a dimensionless quantity serves as a characteristic number of the corresponding physical system and can be fed into transcendental functions, such as $\exp$. For example, we may ask: Can we make a dimensionless quantity out of viscosity $\eta $, density $\rho $, velocity $v$ and length $l$? Now physical quantities are compounds of the fundamental units (the analogue of the atoms above). We can form an adjacency matrix, which can also have negative entries: 
\begin{align*}
\begin{array}{ c|c c c c }
 & [ \eta ] =kg\ m^{-1} s^{-1} & [ \rho ] =kg\ m^{-3} & [ v] =m\ s^{-1} & [ l] =m\\
\hline
kg & 1 & 1 &  & \\
m & -1 & -3 & 1 & 1\\
s & -1 &  & -1 & 
\end{array}
\end{align*}
The ker pc produces
\begin{align*}
&\begin{array}{ c c c|c c c c }
\boxed{1} & -1 & -1 & 1 &  &  & \\
1 & -3 &  &  & 1 &  & \\
 & 1 & -1 &  &  & 1 & \\
 & 1 &  &  &  &  & 1
\end{array} \mapsto \begin{array}{ c c|c c c c }
\boxed{-2} & 1 & -1 & 1 &  & \\
1 & -1 &  &  & 1 & \\
1 &  &  &  &  & 1
\end{array} \mapsto \begin{array}{ c|c c c c }
\boxed{1} & 1 & -1 & -2 & \\
-1 & 1 & -1 &  & -2
\end{array} \\
&\mapsto \begin{array}{ c|c c c c }
 & 2 & -2 & -2 & -2
\end{array}\mapsto \begin{array}{ c|c c c c }
 & -1 & 1 & 1 & 1
\end{array}.
\end{align*}
This vector of exponents appears in the famous Reynolds number $\mathrm{Re} =\eta ^{-1} \rho vl$. \ 
\end{example}

\begin{example} I am taking an example from \cite{Stout} to illustrate that the ker pc method can be used to balance chemical equations that are regarded to be challenging.
In the following we use the shorthand $\mathrm{Cr_{7} N_{66} H_{96} C_{42} O_{24}}$ for the complex $\mathrm{[ Cr( N_{2} H_{4} CO)_{6}]_{4}[ Cr( CN)_{6}]_{3}}$. We would like to analyze the adjacency matrix 
\begin{align*}
\begin{array}{ c|c c c c c c c c c }
 & \mathrm{CO_{2}} & \mathrm{KMnO_{4}} & \mathrm{MnSO_{4}} & \mathrm{K_{2} Cr_{2} O_{7}} & \mathrm{KNO_{3}} & \mathrm{H_{2} SO_{4}} & \mathrm{K_{2} SO_{4}} & \mathrm{H_{2} O} & \mathrm{Cr_{7} N_{66} H_{96} C_{42} O_{24}}\\
\hline
\mathrm{C} & 1 &  &  &  &  &  &  &  & 42\\
\mathrm{Mn} &  & 1 & 1 &  &  &  &  &  & \\
\mathrm{Cr} &  &  &  & 2 &  &  &  &  & 7\\
\mathrm{N} &  &  &  &  & 1 &  &  &  & 66\\
\mathrm{S} &  &  & 1 &  &  & 1 & 1 &  & \\
\mathrm{H} &  &  &  &  &  & 2 &  & 2 & 96\\
\mathrm{K} &  & 1 &  & 2 & 1 &  & 2 &  & \\
\mathrm{O} & 2 & 4 & 4 & 7 & 3 & 4 & 4 & 1 & 24
\end{array}
\end{align*}
Here I choose orderings of $I$ and $J$ that seem adequate for keeping the computation as simple as possible (see the remark at the end of Section \ref{sec:quivered}). Using the policy explained in Example \ref{ex:ionic} the first steps involve mostly copying and pasting. 

\begin{align*}
& \begin{array}{ c c c c c c c c|c c c c c c c c c }
\boxed{1} &  &  &  &  &  &  & 2 & 1 &  &  &  &  &  &  &  & \\
 & 1 &  &  &  &  & 1 & 4 &  & 1 &  &  &  &  &  &  & \\
 & 1 &  &  & 1 &  &  & 4 &  &  & 1 &  &  &  &  &  & \\
 &  & 2 &  &  &  & 2 & 7 &  &  &  & 1 &  &  &  &  & \\
 &  &  & 1 &  &  & 1 & 3 &  &  &  &  & 1 &  &  &  & \\
 &  &  &  & 1 & 2 &  & 4 &  &  &  &  &  & 1 &  &  & \\
 &  &  &  & 1 &  & 2 & 4 &  &  &  &  &  &  & 1 &  & \\
 &  &  &  &  & 2 &  & 1 &  &  &  &  &  &  &  & 1 & \\
42 &  & 7 & 66 &  & 96 &  & 24 &  &  &  &  &  &  &  &  & 1
\end{array}\\
& \mapsto \begin{array}{ c c c c c c c|c c c c c c c c c }
\boxed{1} &  &  &  &  & 1 & 4 &  & 1 &  &  &  &  &  &  & \\
1 &  &  & 1 &  &  & 4 &  &  & 1 &  &  &  &  &  & \\
 & 2 &  &  &  & 2 & 7 &  &  &  & 1 &  &  &  &  & \\
 &  & 1 &  &  & 1 & 3 &  &  &  &  & 1 &  &  &  & \\
 &  &  & 1 & 2 &  & 4 &  &  &  &  &  & 1 &  &  & \\
 &  &  & 1 &  & 2 & 4 &  &  &  &  &  &  & 1 &  & \\
 &  &  &  & 2 &  & 1 &  &  &  &  &  &  &  & 1 & \\
 & 7 & 66 &  & 96 &  & -60 & -42 &  &  &  &  &  &  &  & 1
\end{array}\\
&\mapsto \begin{array}{ c c c c c c|c c c c c c c c c }
 &  & 1 &  & -1 &  &  & -1 & 1 &  &  &  &  &  & \\
2 &  &  &  & 2 & 7 &  &  &  & 1 &  &  &  &  & \\
 & 1 &  &  & 1 & 3 &  &  &  &  & 1 &  &  &  & \\
 &  & 1 & 2 &  & 4 &  &  &  &  &  & 1 &  &  & \\
 &  & 1 &  & 2 & 4 &  &  &  &  &  &  & 1 &  & \\
 &  &  & 2 &  & 1 &  &  &  &  &  &  &  & 1 & \\
7 & 66 &  & 96 &  & -60 & -42 &  &  &  &  &  &  &  & 1
\end{array}\\
&\mapsto \begin{array}{ c c c c c c|c c c c c c c c c }
\boxed{2} &  &  &  & 2 & 7 &  &  &  & 1 &  &  &  &  & \\
 & 1 &  &  & 1 & 3 &  &  &  &  & 1 &  &  &  & \\
 &  & 1 & 2 &  & 4 &  &  &  &  &  & 1 &  &  & \\
 &  & 1 &  & 2 & 4 &  &  &  &  &  &  & 1 &  & \\
 &  &  & 2 &  & 1 &  &  &  &  &  &  &  & 1 & \\
 &  & 1 &  & -1 &  &  & -1 & 1 &  &  &  &  &  & \\
7 & 66 &  & 96 &  & -60 & -42 &  &  &  &  &  &  &  & 1
\end{array}\\
&\mapsto \begin{array}{ c c c c c|c c c c c c c c c }
\boxed{1} &  &  & 1 & 3 &  &  &  &  & 1 &  &  &  & \\
 & 1 & 2 &  & 4 &  &  &  &  &  & 1 &  &  & \\
 & 1 &  & 2 & 4 &  &  &  &  &  &  & 1 &  & \\
 &  & 2 &  & 1 &  &  &  &  &  &  &  & 1 & \\
 & 1 &  & -1 &  &  & -1 & 1 &  &  &  &  &  & \\
132 &  & 192 & -14 & -169 & -84 &  &  & -7 &  &  &  &  & 2
\end{array}\\
&\mapsto \begin{array}{ c c c c|c c c c c c c c c }
\boxed{1} & 2 &  & 4 &  &  &  &  &  & 1 &  &  & \\
1 &  & 2 & 4 &  &  &  &  &  &  & 1 &  & \\
 & 2 &  & 1 &  &  &  &  &  &  &  & 1 & \\
1 &  & -1 &  &  & -1 & 1 &  &  &  &  &  & \\
 & 192 & -146 & -565 & -84 &  &  & -7 & -132 &  &  &  & 2
\end{array}
\\
&\mapsto \begin{array}{ c c c|c c c c c c c c c }
\boxed{-2} & 2 &  &  &  &  &  &  & -1 & 1 &  & \\
2 &  & 1 &  &  &  &  &  &  &  & 1 & \\
-2 & -1 & -4 &  & -1 & 1 &  &  & -1 &  &  & \\
192 & -146 & -565 & -84 &  &  & -7 & -132 &  &  &  & 2
\end{array}\\
&\mapsto \begin{array}{ c c|c c c c c c c c c }
-4 & -2 &  &  &  &  &  & 2 & -2 & -2 & \\
6 & 8 &  & 2 & -2 &  &  &  & 2 &  & \\
-92 & 1130 & 168 &  &  & 14 & 264 & 192 & -192 &  & -4
\end{array}\\
\end{align*}
\begin{align*}
&\mapsto \begin{array}{ c c|c c c c c c c c c }
\boxed{-2} & -1 &  &  &  &  &  & 1 & -1 & -1 & \\
3 & 4 &  & 1 & -1 &  &  &  & 1 &  & \\
-46 & 565 & 84 &  &  & 7 & 132 & 96 & -96 &  & -2
\end{array}\\
&\mapsto \begin{array}{ c|c c c c c c c c c }
-5 &  & -2 & 2 &  &  & -3 & 1 & 3 & \\
-1176 & -168 &  &  & -14 & -264 & -146 & 146 & -46 & 4
\end{array}\\
&\mapsto \begin{array}{ c|c c c c c c c c c }
\boxed{-5} &  & -2 & 2 &  &  & -3 & 1 & 3 & \\
-588 & -84 &  &  & -7 & 132 & -73 & 73 & -23 & 2
\end{array}\\
& \mapsto \begin{array}{ c|c c c c c c c c c }
 & 420 & -1176 & 1176 & 35 & 660 & -1399 & 223 & 1879 & -10
\end{array}
\end{align*}
The balanced reaction is\footnote{I now deserve a grade A from Professor Stout (Central College, Pella, IA) for the entire semester!}
\begin{align*}
&10\mathrm{Cr_{7} N_{66} H_{96} C_{42} O_{24}} +1176\mathrm{KMnO_{4}} +1399\mathrm{H_{2} SO_{4}}\\
&\rightarrow 420\mathrm{CO_{2}} +1176\mathrm{MnSO_{4}} +35\mathrm{K_{2} Cr_{2} O_{7}} +660\mathrm{KNO_{3}} +223\mathrm{K_{2} SO_{4}} +1879\mathrm{H_{2} O}.
\end{align*}
As $4$ of $8$ atoms in this example occur only in $2$ compounds respectively it would also make sense to apply the quivering method of the next section.
\end{example}

\section{Quivered ker pc}
\label{sec:quivered}
When balancing reactions chemists often exploit substitutions arising from all the atoms that only occur in just two compounds. In this way it is often possible 
to avoid the formalism as the resulting linear systems are quite simple. I am trying here to elaborate the chemist's method as an algorithmic tool to preprocess the linear system of the reaction. The preprocessing is referred to as \emph{quivering} as there are a quivers (i.e. directed graphs) involved. The difficulty is to find all of the obvious reasons to put variables to zero.

We are aiming to solve the system 
\begin{align}\label{eq:ls}
\sum _{i\in I} a_{ij} v_{j} =0
\end{align} for all $j\in J$ with $A=[ a_{ij}] \in R^{I\times J}$. Here $I$ and $J$ are finite sets and $I,J$ are understood to carry a total order.
For the quivering only the order on $J$ is relevant.
The plan is to construct minimal subsets $\hat{I} \subseteq I,\ \hat{J} \subseteq J$ and the quivered linear system $\sum _{i\in \hat{I}}\hat{a}_{ij} v_{j} =0$ for all $j\in \hat{J}$ such that all solutions of \eqref{eq:ls} arise from solutions of the quivered system by substitutions.

To this end I am introducing a process referred to as \emph{pruning} of the subsets $I'\subseteq I$ and $J'\subseteq J$.
For each integer $n\geq 0$ we put
\begin{align*}
I'_{n} :=\{i\in I'\mid n=\left|\{j\in J'\mid a_{ij} \neq 0\}\right|\}.
\end{align*}
For each $i\in I'_{1}$ there is a unique $j\in J'$ such that $a_{ij} \neq 0$ this defines a map $\phi':I'\to J'$. We put $I'':=I'\backslash(I_0'\cup I_1')$ and $J'':=J'\backslash \phi'(I')$. Now $I''_2:=\{i\in I''\mid 2=\left|\{j\in J''\mid a_{ij} \neq 0\}\right|\}$ is a quiver. In fact, we put  $s(i)$ for $i\in I''_2$ to be the smaller of the two $j$'s such that $a_{ij} \neq 0$, the other one is called $t(i)$. This provides source and target the maps $s,t:I''_2\to J''$. 
A path $\gamma=(i_1,\dots i_{|\gamma|})$ of length $|\gamma|$ in $I''_2$ is a sequence of edges $i_1,\dots i_{|\gamma|})\in I''_2$ such that for each $p=1,\dots,|\gamma|-1$ we have $t(i_p)=s(i_{p+1})$.
Two paths $\gamma=(i_1,\dots ,i_{|\gamma|})$ and $\gamma'=(i'_1,\dots ,i'_{|\gamma'|})$ in $I''_2$ are called \emph{equivalent}, $\gamma\sim\gamma'$,  if
\begin{enumerate}
    \item $s(i_1)=s(i'_1)$ and $t(i_{|\gamma|})=s(i'_{|\gamma'|})$,
    \item and 
    \begin{align}\label{eq: consistent}
        ( -1)^{|\gamma |}\prod _{p=1}^{|\gamma |}\frac{a_{i_{p} s( i_{p})}}{a_{i_{p} t( i_{p})}} =( -1)^{|\gamma '|}\prod _{q=1}^{|\gamma '|}\frac{a_{i'_{q} s( i'_{q})}}{a_{i'_{q} t( i'_{q})}}.
    \end{align}
\end{enumerate}
If (1) holds and (2) is violated they are called \emph{inconsistent}
, $\gamma\perp\gamma'$. The set of problematic edges is
\begin{align}
    I_2''^!:=\{i\mid i\mbox{ an edge of }\gamma \mbox{ or } \gamma', \  \gamma\perp\gamma'\}.
\end{align}
In the end by Eqn. \eqref{eq:ls} all the $v_j$ for $j\in s(I_2''^!)\cup t(I_2''^!)\cup \phi'(I')$ have to be put zero.
The pruned sets are $\mathrm{pr}(I'):=I_2''\backslash I_2''^!$ and $\mathrm{pr}(J'):=J''\backslash ( s(I_2''^!)\cup t(I_2''^!))$. 
The quiver $s,t: (\mathrm{pr}(I'))_2\to \mathrm{pr}(J)$ has no oriented cycles, since an edge increases the order by construction. All paths in $s,t: (\mathrm{pr}(I'))_2\to \mathrm{pr}(J')$ with the same initial and terminal vertex are equivalent. 
Unfortunately, the pruning should be iterated in general, to get an optimal result. But starting with $I$, $J$ there must be a smallest $k$ such that $\mathrm{pr}^{k+1}(I)=\mathrm{pr}^{k}(I)$ and $\mathrm{pr}^{k+1}(J)=\mathrm{pr}^{k}(J)$; it is referred to as the \emph{pruning depth} $k$.

Let $\gamma$ be a maximal path (possibly trivial) in the quiver $s,t: (\mathrm{pr}^{k}(I))_2\to \mathrm{pr}^{k}(J)$. 
Let us denote by $P$ a set of representatives of equivalence classes of such paths. The quivered system is obtained as follows. We put
$\hat{I} :=\operatorname{pr}^{k}( I) ,\ \hat{J} :=\left\{\operatorname{in}( \gamma ) \mid \gamma \in P\right\}$ and define for $i\in \hat{I} ,j\in \hat{J}$:
\begin{align}
     b_{ij} :=a_{ij} +\sum _{\gamma =(i_{1} ,i_{2} ,\dotsc ,i_{|\gamma |}) \in P: |\gamma|\geq 1, \operatorname{in}(\gamma)=j}( -1)^{|\gamma |} a_{it(i_{|\gamma |})}\prod _{p=1}^{|\gamma |}\frac{a_{i_{p} s( i_{p})}}{a_{i_{p} t( i_{p})}} \in Q.
\end{align}
We get a matrix $B=[ b_{ij}] \in Q^{\hat{I} \times \hat{J}}$. We multiply the rows in $B$ appropriately with scalars in $R$ to obtain a matrix $\hat{A} \in R^{\hat{I} \times \hat{J}}$. 

To reconstruct a solution of \eqref{eq:ls} we proceed as follows. 
By ker pc we find $\ker_Q\hat{A} =\hat{W} R^m$ for some matrix $\hat{W} =[w_{j\mu}] \in R^{\hat{J}\times m}$ with $m:=|\hat{J} |-\Rank(\hat{A}) =|J|-\Rank( A)$. We put for $\mu=\{1,\dots,m\}$
\begin{align}\label{eq:reconstr}
w_{j\mu} :=\begin{cases}
0 & \mbox{if }j\in J\backslash\operatorname{pr}^k(J) \\
w_{\operatorname{in}( \gamma )\mu}( -1)^{|\gamma |}\prod _{p=1}^{|\gamma |}\frac{a_{i_{p} s( i_{p})}}{a_{i_{p} t( i_{p})}} & \mbox{if there is a } \gamma \in P( j) \ 
\end{cases}
\end{align}
to obtain $W=[w_{j\mu}]\in Q^{J\times m}$. We multiply appropriately columns of $W$ with elements of $R$ to arrive at $V\in R^{J}$ with coprime columns (this is always possible if $R$ is a gcd domain). The columns of $V$ form a basis of the $R$-module $\ker_R A$.
\begin{example}
We look once again into Example \ref{ex:Fischer-Tropsch}.
The pruning depth of the Fischer-Tropsch adjacency matrix \eqref{eq:FT} is $0$. Reactions that make sense chemically should not need pruning. We have $I_0=I_1=\emptyset$ and the quiver $s,t:I_2\to J$ 
\begin{align*}
    \mathrm{H_{2} O\xleftarrow{\ \ \ O\ \ \ } CO\xrightarrow{\ \ \ C\ \ \ } C_{n} H_{2n+2}}.
\end{align*}
The quivered linear system is given by $\hat{J}=\{\mathrm{CO,H_2}\}$ and $\hat{I}=\{\mathrm{H}\}$ with $\hat{A}=[-2n-1,n]$ with quivered solution $[-n,-2n-1]^T$. Using Eqn. \eqref{eq:reconstr} we reproduce the result of Example \ref{ex:Fischer-Tropsch}.
\end{example}

\begin{example}
To avoid abuse of terrestrial chemistry (it seems important, see \cite{Forester,whynots,IceB}) I take a look at an adjacency matrix proposed by a chemistry student from Betelgeuse:
\begin{align*}
   \begin{array}{ c|c c c c c c c c }
 & ACD & ABDE & B_{2} C_{3} DE & DEFGH & E_{2} H & E_{6} F & FG_{3} & G H\\
\hline
A & 1 & 1 &  &  &  &  &  & \\
B &  & 1 & 2 &  &  &  &  & \\
C & 1 &  & 3 &  &  &  &  & \\
D & 1 & 1 & 1 & 1 &  &  &  & \\
E &  & 1 & 1 & 1 & 2 & 6 &  & \\
F &  &  &  & 1 &  & 1 & 1 & \\
G &  &  &  & 1 &  &  & 3 & 1\\
H &  &  &  & 1 & 1 &  &  & 1
\end{array}.
\end{align*}
Note that $I_{0} =I_{1} =\emptyset $ and the two paths 
\begin{center}
\tikzset{every picture/.style={line width=0.75pt}} 
\begin{tikzpicture}[x=0.75pt,y=0.75pt,yscale=-1,xscale=1]

\draw    (67.33,37.33) -- (139.38,37.71) ;
\draw [shift={(141.38,37.72)}, rotate = 180.3] [color={rgb, 255:red, 0; green, 0; blue, 0 }  ][line width=0.75]    (10.93,-3.29) .. controls (6.95,-1.4) and (3.31,-0.3) .. (0,0) .. controls (3.31,0.3) and (6.95,1.4) .. (10.93,3.29)   ;
\draw    (168.63,50.73) -- (124.01,98.02) ;
\draw [shift={(122.63,99.47)}, rotate = 313.34] [color={rgb, 255:red, 0; green, 0; blue, 0 }  ][line width=0.75]    (10.93,-3.29) .. controls (6.95,-1.4) and (3.31,-0.3) .. (0,0) .. controls (3.31,0.3) and (6.95,1.4) .. (10.93,3.29)   ;
\draw    (47,52) -- (90.25,97.03) ;
\draw [shift={(91.63,98.47)}, rotate = 226.16] [color={rgb, 255:red, 0; green, 0; blue, 0 }  ][line width=0.75]    (10.93,-3.29) .. controls (6.95,-1.4) and (3.31,-0.3) .. (0,0) .. controls (3.31,0.3) and (6.95,1.4) .. (10.93,3.29)   ;

\draw (25,31) node [anchor=north west][inner sep=0.75pt]    {$ACD$};
\draw (165,65.4) node [anchor=north west][inner sep=0.75pt]    {$B$};
\draw (82,102.4) node [anchor=north west][inner sep=0.75pt]    {$B_{2} C_{3} DE$};
\draw (100.33,16) node [anchor=north west][inner sep=0.75pt]    {$A$};
\draw (150,31) node [anchor=north west][inner sep=0.75pt]    {$ABDE$};
\draw (38,70.4) node [anchor=north west][inner sep=0.75pt]    {$C$};

\end{tikzpicture}
\end{center}
\noindent are inconsistent and $v_{ACD}=v_{B_2C_3DE}=v_{ABDE}=0$. Hence 
\begin{align*}
\operatorname{pr}( I) =\{D,E,F,G\} \mbox{ and }\operatorname{pr}( J) =\{DEFGH,E_{2} G,E_{6} F,FG_{3} ,G H\}.     
\end{align*}
While $\operatorname{pr}( I)_{0} =\emptyset $ we have $\operatorname{pr}( I)_{1} =\{D\}$ and $v_{DEFGH} =0$. On the other hand, the two paths
\begin{center}
\tikzset{every picture/.style={line width=0.75pt}} 
\begin{tikzpicture}[x=0.75pt,y=0.75pt,yscale=-1,xscale=1]

\draw    (67.33,32.33) -- (139.38,32.71) ;
\draw [shift={(141.38,32.72)}, rotate = 180.3] [color={rgb, 255:red, 0; green, 0; blue, 0 }  ][line width=0.75]    (10.93,-3.29) .. controls (6.95,-1.4) and (3.31,-0.3) .. (0,0) .. controls (3.31,0.3) and (6.95,1.4) .. (10.93,3.29)   ;
\draw    (174.63,42.73) -- (175.11,91.02) ;
\draw [shift={(175.13,93.02)}, rotate = 269.43] [color={rgb, 255:red, 0; green, 0; blue, 0 }  ][line width=0.75]    (10.93,-3.29) .. controls (6.95,-1.4) and (3.31,-0.3) .. (0,0) .. controls (3.31,0.3) and (6.95,1.4) .. (10.93,3.29)   ;
\draw    (41,43) -- (42.09,92.02) ;
\draw [shift={(42.13,94.02)}, rotate = 268.73] [color={rgb, 255:red, 0; green, 0; blue, 0 }  ][line width=0.75]    (10.93,-3.29) .. controls (6.95,-1.4) and (3.31,-0.3) .. (0,0) .. controls (3.31,0.3) and (6.95,1.4) .. (10.93,3.29)   ;
\draw    (80,108.99) -- (143.13,109.02) ;
\draw [shift={(78,108.98)}, rotate = 0.03] [color={rgb, 255:red, 0; green, 0; blue, 0 }  ][line width=0.75]    (10.93,-3.29) .. controls (6.95,-1.4) and (3.31,-0.3) .. (0,0) .. controls (3.31,0.3) and (6.95,1.4) .. (10.93,3.29)   ;

\draw (20,25) node [anchor=north west][inner sep=0.75pt]    {$E_{2} H$};
\draw (182,56.4) node [anchor=north west][inner sep=0.75pt]    {$F$};
\draw (23,101.4) node [anchor=north west][inner sep=0.75pt]    {$G H$};
\draw (100.33,12) node [anchor=north west][inner sep=0.75pt]    {$E$};
\draw (157,25) node [anchor=north west][inner sep=0.75pt]    {$E_{6} F$};
\draw (21,61.4) node [anchor=north west][inner sep=0.75pt]    {$H$};
\draw (156,100.38) node [anchor=north west][inner sep=0.75pt]    {$FG_{3}$};
\draw (103,116.38) node [anchor=north west][inner sep=0.75pt]    {$G$};

\end{tikzpicture}
\end{center}

\noindent are equivalent. The pruning depth is $2$. The diagram represents the quiver $\operatorname{pr}^{2}( I)_{2}\rightarrow \operatorname{pr}^{2}( J)$.
We have $\hat{I} =\emptyset $, $\hat{J} =\{E_{2} H\}$ and the quivered linear system is empty. The solution of the quivered system is without loss of generality $w_{E_{2} H} =1$. The coprime integer solution is $v_{E_{2} H} =-3$, $v_{GH} =3$, $v_{E_{6} F} =1$ and $v_{FG_{3}} =-1$.
To avoid mistakes it is convenient to annotate on paper the $v$'s in the diagram in front of the vertices, the sign has to change along an edge.
The balanced reaction is $3E_{2}H+FG_3\to E_{6} F+ 3GH$.\footnote{The Betelgeusian civilization has been saved: all the dangerous $FG_3$ could be removed  from the atmosphere and replaced by the toxic $E_{6} F$.}
\end{example}

Instead of quivering the practitioner of ker pc can do the following: order $I$ such that 1.) for each $p\geq 1$ and $I_p<I_{p+1}$ and 2.) $I_p<I_{0}$ and order $J$ in some lexicographic order compatible with that of $J$. The order of $I$ ensures that the ker pc start trivially at $I_1$ and the complexity of the condensations grows when moving from $I_p$ to $I_{p+1}$. The order of $J$ ensures that there are no unnecessary row swaps needed.
The cost savings of this strategy and quivering seem similar.

\section{Miscellaneous investigations}
\label{sec:misc}

I would like to point out that other fundamental questions in linear algebra can be solved with pc as well. In fact, with pc one can re-purpose the calculations of less specific investigations for the more specific ones.  One can, e.g., calculate the rank of a quadratic matrix with pc. If it turns out to be full then one fills in the necessary details to calculate the determinant or the inverse. Or one figures out the rank of an arbitrary matrix with pc and later decides to determine the four subspaces, or to solve a linear system, just by filling out some extra blocks. I call such a type of analysis \emph{modular}: you can always build on what you already have achieved.

\subsection{Inhomogeneous systems}
\label{subsec:inh}

Here I am indicating how the ker pc can be adapted to solve systems of inhomogeneous linear equations
\begin{align}\label{eq:inh} 
A\vec{v} =\vec{w}
\end{align}
for $A\in R^{n\times m}$ and $\vec{w}\in R^n$.
The solution $\vec{v}$ is in $Q^m$ (actually it can be assumed to be in $(S^{-1}R)^m$ for a certain localization $S^{-1}R$ of $R$.) 
In principle, we only have to add one new row to the pattern of ker pc. One advantage is that we only have to divide at the very last step.
The basic idea is that
\begin{align*}
A\vec{v} =\vec{w} \ \ \Longleftrightarrow [ A|\vec{w}]\begin{bmatrix}
\vec{v}\\
-1
\end{bmatrix} =\vec{0},\end{align*} so that the solution of an inhomogeneous system is equivalent to an appropriately normalized vector in $\ker_Q[ A|\vec{w}]$.

In more detail, the pattern at the start of the $(l+1)$st condensation is
\begin{align*}
\begin{array}{ c|c|c }
X_{l} & Y_{l} & \vec{0}\\
\hline
(\vec{w}_{l})^{T} & (\vec{p}_{l})^{T} & \lambda _{l}
\end{array}.
\end{align*}
The initial condition is $X_{0} =A^{T}$, $Y_{0} =\boldsymbol{1}$, $\vec{w}_{0} =\vec{w}$,  $\vec{p}_{0} =\vec{0}$, $\lambda _{0} =-1$. The ker pc algorithm of Section \ref{sec:kernel} is applied to this pattern. 
The pivots however are only be chosen in the upper left block.
The recursion stops at $l=p$ when $X_{p}$ is zero or empty, which means $\Rank A=p$. If in this case  $(\vec{w}_{p})^{T} $ is nonzero, then the system has no solution. If $(\vec{w}_{p})^{T} $ is zero or empty the general solution $\vec{v}\in Q^m$ of Equation \eqref{eq:inh} is an element of
\begin{align*}
  \lambda _{p}^{-1} \vec{p}_{p}+Y_{p}^TQ^{n-p}.
\end{align*}
Note that by construction $\lambda _{p}\neq 0$ and $Y_{p}^T$ is injective.
Similarly, a solution $\vec{v}\in R^m_{\lambda_{p}}$  of Equation \eqref{eq:inh} is in $\lambda _{p}^{-1} \vec{p}_{p}+Y_{p}^TR^{n-p}_{\lambda_{p}}.$
Here $R_{\lambda_{p}}$ denotes the localization of $R$ at the multiplicative subset $S=\{\lambda_{p}^k \ | \ k\geq 1 \}$.
\begin{example}
From the pc
\begin{align*}
&\begin{array}{ c c c c|c c c c|c }
\boxed{1} & 1 & 1 & 1 & 1 &  &  &  & \\
2 & 3 & 4 & 5 &  & 1 &  &  & \\
3 & 5 & 7 & 9 &  &  & 1 &  & \\
4 & 7 & 10 & 13 &  &  &  & 1 & \\
\hline
10 & 16 & 22 & 28 &  &  &  &  & -1
\end{array} \mapsto \begin{array}{ c c c|c c c c|c }
\boxed{1}& 2 & 3 & -2 & 1 &  &  & \\
2 & 4 & 6 & -3 &  & 1 &  & \\
3 & 6 & 9 & -4 &  &  & 1 & \\
\hline
6 & 12 & 18 & -10 &  &  &  & -1
\end{array} \mapsto \begin{array}{ c c|c c c c|c }
 &  & 1 & -2 & 1 &  & \\
 &  & 2 & -3 &  & 1 & \\
\hline
 &  & 2 & -6 &  &  & -1
\end{array}\\
&=\begin{array}{ c c|c c c c|c }
 &  & 1 & -2 & 1 &  & \\
 &  & 2 & -3 &  & 1 & \\
\hline
 &  & -2 & 6 &  &  & 1
\end{array}
\end{align*}
we deduce that
$\left\{\begin{bmatrix}
v_{1}\\
v_{2}\\
v_{3}\\
v_{4}
\end{bmatrix} \in \mathbb{Q}^{4} \biggr\rvert\ \begin{bmatrix}
1 & 2 & 3 & 4\\
1 & 3 & 5 & 7\\
1 & 4 & 7 & 10\\
1 & 5 & 9 & 13
\end{bmatrix}\begin{bmatrix}
v_{1}\\
v_{2}\\
v_{3}\\
v_{4}
\end{bmatrix} =\begin{bmatrix}
10\\
16\\
22\\
28
\end{bmatrix}\right\} =\begin{bmatrix}
-2\\
6\\
0\\
0
\end{bmatrix} +\begin{bmatrix}
1&2\\
-2&-3\\
1&0\\
0&1
\end{bmatrix}\mathbb{Q}^2$. A similar statement holds over $\Z$.
\end{example}

\subsection{Matrix inversion}
\label{subsec:inversion}
Another typical problem is to find a matrix $V\in Q^{m\times k}$ that solves $AV=W$ for given matrices $A\in R^{n\times m}$ and $W\in R^{n\times k}$. Here the idea is to do the procedure of Subsection \ref{subsec:inh} simultaneously for all the columns of $W$. In other words, the pattern at the start of the $(l+1)$st condensation is
\begin{align*}
\begin{array}{ c|c|c }
X_{l} & Y_{l} & \vec{0}\\
\hline
W_l^{T} & P_{l}^{T} & \vec{\lambda}_{l}
\end{array}.
\end{align*}
The initial condition is $X_{0} =A^{T}$, $Y_{0} =\boldsymbol{1}_m$, $W_{0} =W^T$,  $P_{0} =\boldsymbol{0}$, $\vec{\lambda _{l}}=[-1,\dots,-1]^T$.
Again, the pivots can only be chosen in the upper left block.
For simplicity I assume that $A$ is injective, i.e., $n\geq m$ and $\Rank A=m$.
If $W_{m}^{T}$ is zero or empty the solution exists. In this case $V\in Q^{m\times k}$ of Equation \eqref{eq:inh} is 
  $V=P_{m}D$, where $D\in Q^{k\times k}$ is the diagonal matrix whose entries are the mutiplicative inverses of entries of $\vec{\lambda}_{m}$.  
  
In the special case when  $n=m=k$, $A$ invertible and $W=\boldsymbol 1$ the solution $P_{n}D$ exists and equals $A^{-1}$. Alternatively, if one starts the process with the invertible $X_{0} =A$ then $A^{-1}=DP_{n}^T$.

\begin{example} Let $R$ be the ring of integers $\Z$. The following pc 
\begin{align*}
&\begin{array}{ c c c c|c c c c|c }
\boxed{1} & 2 & 3 & 5 & 1 &  &  &  & \\
2 & 3 & 5 & 7 &  & 1 &  &  & \\
3 & 5 & 7 & 11 &  &  & 1 &  & \\
5 & 7 & 11 & 13 &  &  &  & 1 & \\
\hline
1 &  &  &  &  &  &  &  & -1\\
 & 1 &  &  &  &  &  &  & -1\\
 &  & 1 &  &  &  &  &  & -1\\
 &  &  & 1 &  &  &  &  & -1
\end{array} \mapsto \begin{array}{ c c c|c c c c|c }
\boxed{-1} & -1 & -3 & -2 & 1 &  &  & \\
-1 & -2 & -4 & -3 &  & 1 &  & \\
-3 & -4 & -12 & -5 &  &  & 1 & \\
\hline
-2 & -3 & -5 & -1 &  &  &  & -1\\
1 &  &  &  &  &  &  & -1\\
 & 1 &  &  &  &  &  & -1\\
 &  & 1 &  &  &  &  & -1
\end{array}\\
&\mapsto \begin{array}{ c c|c c c c|c }
\boxed{1} & 1 & 1 & 1 & -1 &  & \\
1 & 3 & -1 & 3 &  & -1 & \\
\hline
1 & -1 & -3 & 2 &  &  & 1\\
1 & 3 & 2 & -1 &  &  & 1\\
-1 &  &  &  &  &  & 1\\
 & -1 &  &  &  &  & 1
\end{array} \mapsto \begin{array}{ c|c c c c|c }
\boxed{2} & -2 & 2 & 1 & -1 & \\
\hline
-2 & -4 & 1 & 1 &  & 1\\
2 & 1 & -2 & 1 &  & 1\\
1 & 1 & 1 & -1 &  & 1\\
-1 &  &  &  &  & 1
\end{array} \mapsto \begin{array}{|c c c c|c }
\hline
-12 & 6 & 4 & -2 & 2\\
6 & -8 &  & 2 & 2\\
4 &  & -3 & 1 & 2\\
-2 & 2 & 1 & -1 & 2
\end{array}
\end{align*}
shows\footnote{To me it looks like the matrix is stretching out its arms and crawling under the fraction bar.} that 
\begin{align*}
\begin{bmatrix}
1 & 2 & 3 & 5\\
2 & 3 & 5 & 7\\
3 & 5 & 7 & 11\\
5 & 7 & 11 & 13
\end{bmatrix}^{-1} =\frac{1}{2}\begin{bmatrix}
-12 & 6 & 4 & -2\\
6 & -8 & 0 & 2\\
4 & 0 & -3 & 1\\
-2 & 2 & 1 & -1
\end{bmatrix}\in\Q^{4\times 4}.
\end{align*}
During the process there were no row swaps and the symmetry of the matrix could be maintained, which makes the procedure more effective. I suggest to call this procedure \emph{inv pc}.
\end{example}

\subsection{The four subspaces}
\label{subsec:4pc}
Given a matrix $A\in R^{n\times m}$ (or $\in Q^{n\times m}$) we can use a version of ker pc to actually determine bases for $\ker_Q(A)$, $\operatorname{im}_Q(A)$,
$\ker_Q(A^T)$ and $\operatorname{im}_Q(A^T)$ in just one calculation.

The procedure is similar to that inv pc of the previous subsection and it goes as follows. Write $A=\begin{bmatrix}
\vec{v}_{1} & \cdots  & \vec{v}_{m}
\end{bmatrix}$ \ and \ $A^{T} =\begin{bmatrix}
\vec{u}_{1} & \cdots  & \vec{u}_{m}
\end{bmatrix}$ and put $p:=\operatorname{rank}( A)$.
Before step $l+1$ of the condensation the pattern is
\begin{align*}
 \begin{array}{ c|c }
X_{l} & Y_{l}\\
\hline
W_{l}^{T} & 
\end{array}.
\end{align*}
What is in the lower right block is irrelevant for the four subspaces and we leave it blank. The starting values are $X_{0} =A^{T} ,\  Y_{0} =\boldsymbol{1}_{m} ,\ W_{0}^{T} =\boldsymbol{1}_{n}$. The ker pc algorithm of Section \ref{sec:kernel}
is applied to this pattern, while the pivots can only be chosen in the upper left block. (What appears in the lower right block is irrelvant for the recursion; the starting value is by default $\boldsymbol{0}$.)
If for $l=0$ there is a pivot in the first column add $\vec{u}_{1}$ to the list of generators of $\operatorname{im}_Q\left( A^{T}\right)$ and denote by $\sigma_{1}$ row permutation used to move the pivot to the first row. Otherwise, there is a maximal zero block of size $m\times k_{1}$ in the first columns $X_{0}$. Add the first $k_{1}$ columns in $W_{0}^{T}$ (that is the columns below the zero block) to the list of generators of $\ker_Q\left( A^{T}\right)$. There is a pivot in the $( k_{1} +1)$th column of $X_{0}$. Apply now \textbf{Step 2 (pc)} and \textbf{Step 3 (cleaning up)} and put $l=1$. The rescaling of rows  \textbf{Step 3 (cleaning up)} must not be applied below the bar. If there is a pivot in the first column add $\vec{u}_{2}$ to the list of generators of $\operatorname{im}_Q\left( A^{T}\right)$ and denote by $\sigma_{2}$ row permutation used to move the pivot to the first row. Otherwise, there is a maximal zero block of size $( m-1) \times k_{2}$ in the first columns $X_{1}$. Add the first $k_{2}$ columns in $W_{1}^{T}$ (that is the columns below the zero block) to the list of generators of $\ker_Q\left( A^{T}\right)$. There is a pivot in the $( k_{2} +1)$th column of $X_{1}$ in the first row. 
Apply now \textbf{Step 2 (pc)} and \textbf{Step 3 (cleaning up)} and put $l=2$.
In this version of pc, applying \textbf{Step 3} is not be allowed below the bar.
Continue in that manner until $l=p$. Now $X_{p}$ is zero or empty. Add the columns of $W_{p}^{T}$  to the list of generators of $\ker_Q\left( A^{T}\right)$. The transposes of the rows of $Y_{p}$ form a basis for $\ker_Q( A)$. Putting $\sigma:=\sigma_{p} \cdots \sigma_{1}$, a basis for $\operatorname{im}_Q( A)$ is formed by $\vec{v}_{\sigma^{-1}( 1)} ,\vec{v}_{\sigma^{-1}( 1)} ,\dotsc ,\vec{v}_{\sigma^{-1}( p)}$. In this way we obtain simultaneously bases for all the four $Q$-subspaces $\ker_Q( A) ,\ \operatorname{im}_Q( A) ,\ker_Q\left( A^{T}\right)$ and $\operatorname{im}_Q\left( A^{T}\right)$.

I suggest for this procedure the name \emph{4pc}.
It is actually a pretty quick and  clean way to deduce such bases. Let us look at some examples.

\begin{example}
    Let me put sort of an academic example that hopefully allows to get an impression what is going on with the row permutations:
\begin{align*}
    A=\begin{bmatrix}
0 & 0 & 0 & 1\\
0 & 0 & 1 & 1\\
0 & 0 & 1 & 1\\
1 & 0 & 1 & 1
\end{bmatrix} =\begin{bmatrix}
\vec{v}_{1} & \vec{v}_{2} & \vec{v}_{3} & \vec{v}_{4}
\end{bmatrix}, \ \ \ A^{T} =\begin{bmatrix}
0 & 0 & 0 & 1\\
0 & 0 & 0 & 0\\
0 & 1 & 1 & 1\\
1 & 1 & 1 & 1
\end{bmatrix} =\begin{bmatrix}
\vec{u}_{1} & \vec{u}_{2} & \vec{u}_{3} & \vec{u}_{4}
\end{bmatrix}.
\end{align*}

In the first step of the 4pc we apply the permutation $\sigma =( 1,3,2,4)$ to the rows of $A^{T}$. Non-mathematicians might prefer a graphical notation for $\sigma$ instead of the cycle notation used above. It makes it easy to keep track of compositions and inversions of permutations.
\begin{center}

\tikzset{every picture/.style={line width=0.75pt}} 

\begin{tikzpicture}[x=0.5pt,y=0.5pt,yscale=-1,xscale=1]

\draw    (67.36,82.63) .. controls (94.46,85.77) and (117.69,111.51) .. (134.39,114.32) ;
\draw [shift={(65.26,82.44)}, rotate = 4.09] [color={rgb, 255:red, 0; green, 0; blue, 0 }  ][line width=0.75]    (10.93,-3.29) .. controls (6.95,-1.4) and (3.31,-0.3) .. (0,0) .. controls (3.31,0.3) and (6.95,1.4) .. (10.93,3.29)   ;
\draw    (66.95,142.37) .. controls (91.94,133.39) and (103.04,96.16) .. (133.26,85.44) ;
\draw [shift={(65,143)}, rotate = 343.77] [color={rgb, 255:red, 0; green, 0; blue, 0 }  ][line width=0.75]    (10.93,-3.29) .. controls (6.95,-1.4) and (3.31,-0.3) .. (0,0) .. controls (3.31,0.3) and (6.95,1.4) .. (10.93,3.29)   ;
\draw    (65.3,52.45) .. controls (108.88,53.8) and (96.7,130.75) .. (133.26,143.44) ;
\draw [shift={(63.26,52.44)}, rotate = 358.75] [color={rgb, 255:red, 0; green, 0; blue, 0 }  ][line width=0.75]    (10.93,-3.29) .. controls (6.95,-1.4) and (3.31,-0.3) .. (0,0) .. controls (3.31,0.3) and (6.95,1.4) .. (10.93,3.29)   ;
\draw    (67.22,114.83) .. controls (98.3,104.43) and (90.25,71.96) .. (132.39,54.32) ;
\draw [shift={(65.26,115.44)}, rotate = 343.83] [color={rgb, 255:red, 0; green, 0; blue, 0 }  ][line width=0.75]    (10.93,-3.29) .. controls (6.95,-1.4) and (3.31,-0.3) .. (0,0) .. controls (3.31,0.3) and (6.95,1.4) .. (10.93,3.29)   ;
\draw  [fill={rgb, 255:red, 0; green, 0; blue, 0 }  ,fill opacity=0.14 ] (80,165.99) -- (105.43,165.99) -- (105.43,156.32) -- (122.39,175.66) -- (105.43,195) -- (105.43,185.33) -- (80,185.33) -- cycle ;
\draw  [fill={rgb, 255:red, 0; green, 0; blue, 0 }  ,fill opacity=0.14 ] (114.39,14.39) -- (84.39,14.39) -- (84.39,5.32) -- (64.39,23.47) -- (84.39,41.61) -- (84.39,32.54) -- (114.39,32.54) -- cycle ;
\draw    (422.01,59.92) .. controls (465.25,80.1) and (460.88,105.51) .. (493.39,118.32) ;
\draw [shift={(420,59)}, rotate = 24.12] [color={rgb, 255:red, 0; green, 0; blue, 0 }  ][line width=0.75]    (10.93,-3.29) .. controls (6.95,-1.4) and (3.31,-0.3) .. (0,0) .. controls (3.31,0.3) and (6.95,1.4) .. (10.93,3.29)   ;
\draw    (421.78,116.06) .. controls (450.12,100.64) and (447.12,81.02) .. (489.26,60.44) ;
\draw [shift={(420,117)}, rotate = 332.79] [color={rgb, 255:red, 0; green, 0; blue, 0 }  ][line width=0.75]    (10.93,-3.29) .. controls (6.95,-1.4) and (3.31,-0.3) .. (0,0) .. controls (3.31,0.3) and (6.95,1.4) .. (10.93,3.29)   ;
\draw    (421.01,90.23) .. controls (434.24,91.53) and (476.2,92.16) .. (492.26,87.44) ;
\draw [shift={(419,90)}, rotate = 7.97] [color={rgb, 255:red, 0; green, 0; blue, 0 }  ][line width=0.75]    (10.93,-3.29) .. controls (6.95,-1.4) and (3.31,-0.3) .. (0,0) .. controls (3.31,0.3) and (6.95,1.4) .. (10.93,3.29)   ;
\draw    (421.01,147.23) .. controls (434.24,148.53) and (476.2,149.16) .. (492.26,144.44) ;
\draw [shift={(419,147)}, rotate = 7.97] [color={rgb, 255:red, 0; green, 0; blue, 0 }  ][line width=0.75]    (10.93,-3.29) .. controls (6.95,-1.4) and (3.31,-0.3) .. (0,0) .. controls (3.31,0.3) and (6.95,1.4) .. (10.93,3.29)   ;
\draw    (227.86,82.74) .. controls (269.47,88.54) and (274.64,139.69) .. (299.26,146.44) ;
\draw [shift={(225.26,82.44)}, rotate = 5.19] [color={rgb, 255:red, 0; green, 0; blue, 0 }  ][line width=0.75]    (10.93,-3.29) .. controls (6.95,-1.4) and (3.31,-0.3) .. (0,0) .. controls (3.31,0.3) and (6.95,1.4) .. (10.93,3.29)   ;
\draw    (226.27,145.31) .. controls (269.54,141.77) and (278.59,87.36) .. (300.26,82.44) ;
\draw [shift={(224.26,145.44)}, rotate = 357.46] [color={rgb, 255:red, 0; green, 0; blue, 0 }  ][line width=0.75]    (10.93,-3.29) .. controls (6.95,-1.4) and (3.31,-0.3) .. (0,0) .. controls (3.31,0.3) and (6.95,1.4) .. (10.93,3.29)   ;
\draw    (225.34,55.01) .. controls (238.26,52.95) and (282.28,53.56) .. (298.26,55.44) ;
\draw [shift={(223.26,55.44)}, rotate = 344.54] [color={rgb, 255:red, 0; green, 0; blue, 0 }  ][line width=0.75]    (10.93,-3.29) .. controls (6.95,-1.4) and (3.31,-0.3) .. (0,0) .. controls (3.31,0.3) and (6.95,1.4) .. (10.93,3.29)   ;
\draw    (226.99,115.43) .. controls (245.83,110.24) and (280.89,107.65) .. (298.26,113.44) ;
\draw [shift={(225,116)}, rotate = 343.06] [color={rgb, 255:red, 0; green, 0; blue, 0 }  ][line width=0.75]    (10.93,-3.29) .. controls (6.95,-1.4) and (3.31,-0.3) .. (0,0) .. controls (3.31,0.3) and (6.95,1.4) .. (10.93,3.29)   ;
\draw    (323.87,55.24) .. controls (365.78,62.04) and (378.64,138.16) .. (403.26,144.91) ;
\draw [shift={(321.26,54.91)}, rotate = 5.19] [color={rgb, 255:red, 0; green, 0; blue, 0 }  ][line width=0.75]    (10.93,-3.29) .. controls (6.95,-1.4) and (3.31,-0.3) .. (0,0) .. controls (3.31,0.3) and (6.95,1.4) .. (10.93,3.29)   ;
\draw    (319.27,145.29) .. controls (362.92,140.98) and (384.59,60.83) .. (406.26,55.91) ;
\draw [shift={(317.26,145.44)}, rotate = 357.46] [color={rgb, 255:red, 0; green, 0; blue, 0 }  ][line width=0.75]    (10.93,-3.29) .. controls (6.95,-1.4) and (3.31,-0.3) .. (0,0) .. controls (3.31,0.3) and (6.95,1.4) .. (10.93,3.29)   ;
\draw    (326.38,81.49) .. controls (339.73,79.48) and (387.28,80.56) .. (403.26,82.44) ;
\draw [shift={(324.26,81.91)}, rotate = 344.54] [color={rgb, 255:red, 0; green, 0; blue, 0 }  ][line width=0.75]    (10.93,-3.29) .. controls (6.95,-1.4) and (3.31,-0.3) .. (0,0) .. controls (3.31,0.3) and (6.95,1.4) .. (10.93,3.29)   ;
\draw    (321.01,115.43) .. controls (340.38,110.41) and (382.89,110.12) .. (400.26,115.91) ;
\draw [shift={(319,116)}, rotate = 343.06] [color={rgb, 255:red, 0; green, 0; blue, 0 }  ][line width=0.75]    (10.93,-3.29) .. controls (6.95,-1.4) and (3.31,-0.3) .. (0,0) .. controls (3.31,0.3) and (6.95,1.4) .. (10.93,3.29)   ;

\draw (137,45.4) node [anchor=north west][inner sep=0.75pt]    {$1$};
\draw (139,75.4) node [anchor=north west][inner sep=0.75pt]    {$2$};
\draw (48,75.4) node [anchor=north west][inner sep=0.75pt]    {$2$};
\draw (48,45.4) node [anchor=north west][inner sep=0.75pt]    {$1$};
\draw (139,106.4) node [anchor=north west][inner sep=0.75pt]    {$3$};
\draw (47,105.4) node [anchor=north west][inner sep=0.75pt]    {$3$};
\draw (140,135.4) node [anchor=north west][inner sep=0.75pt]    {$4$};
\draw (48,134.4) node [anchor=north west][inner sep=0.75pt]    {$4$};
\draw (83,162) node [anchor=north west][inner sep=0.75pt]    {$\sigma ^{-1}$};
\draw (86,17) node [anchor=north west][inner sep=0.75pt]    {$\sigma $};
\draw (496,49.4) node [anchor=north west][inner sep=0.75pt]    {$1$};
\draw (499,77.4) node [anchor=north west][inner sep=0.75pt]    {$2$};
\draw (407,79.4) node [anchor=north west][inner sep=0.75pt]    {$2$};
\draw (407,49.4) node [anchor=north west][inner sep=0.75pt]    {$1$};
\draw (499,108.4) node [anchor=north west][inner sep=0.75pt]    {$3$};
\draw (406,109.4) node [anchor=north west][inner sep=0.75pt]    {$3$};
\draw (500,137.4) node [anchor=north west][inner sep=0.75pt]    {$4$};
\draw (407,138.4) node [anchor=north west][inner sep=0.75pt]    {$4$};
\draw (211,75.4) node [anchor=north west][inner sep=0.75pt]    {$2$};
\draw (211,45.4) node [anchor=north west][inner sep=0.75pt]    {$1$};
\draw (210,105.4) node [anchor=north west][inner sep=0.75pt]    {$3$};
\draw (211,134.4) node [anchor=north west][inner sep=0.75pt]    {$4$};
\draw (305,75.4) node [anchor=north west][inner sep=0.75pt]    {$2$};
\draw (305,45.4) node [anchor=north west][inner sep=0.75pt]    {$1$};
\draw (304,105.4) node [anchor=north west][inner sep=0.75pt]    {$3$};
\draw (305,134.4) node [anchor=north west][inner sep=0.75pt]    {$4$};
\draw (162,95.4) node [anchor=north west][inner sep=0.75pt]    {$=$};
\end{tikzpicture}

\end{center}
Written in cycle form this is $ \sigma =( 1,3,2,4) =( 2,4)( 1,4)( 1,3)$ and $\sigma^{-1} =( 4,2,3,1)$.
The 4pc calculation goes as follows
\begin{align*}
&\begin{array}{ c c c c|c c c c }
 &  &  & 1 & 1 &  &  & \\
 &  &  &  &  & 1 &  & \\
 & 1 & 1 & 1 &  &  & 1 & \\
1 & 1 & 1 & 1 &  &  &  & 1\\
\hline
1 &  &  &  &  &  &  & \\
 & 1 &  &  &  &  &  & \\
 &  & 1 &  &  &  &  & \\
 &  &  & 1 &  &  &  & 
\end{array} \ \ \ \ \ \ \ \ \ \ \ \ \ \ \ \left(\begin{bmatrix}
0\\
0\\
0\\
1
\end{bmatrix}\rightarrow \operatorname{im}_\Q\left( A^{T}\right)\right)\\
&\overset{\sigma }{\mapsto }\begin{array}{ c c c c|c c c c }
\boxed{1} & 1 & 1 & 1 &  &  &  & 1\\
 & 1 & 1 & 1 &  &  & 1 & \\
 &  &  & 1 & 1 &  &  & \\
 &  &  &  &  & 1 &  & \\
\hline
1 &  &  &  &  &  &  & \\
 & 1 &  &  &  &  &  & \\
 &  & 1 &  &  &  &  & \\
 &  &  & 1 &  &  &  & 
\end{array} \ \ \ \ \ \ \ \mapsto \begin{array}{ c c c|c c c c }
\boxed{1} & 1 & 1 &  &  & 1 & \\
 &  & 1 & 1 &  &  & \\
 &  &  &  & 1 &  & \\
\hline
-1 & -1 & -1 &  &  &  & \\
1 &  &  &  &  &  & \\
 & 1 &  &  &  &  & \\
 &  & 1 &  &  &  & 
\end{array} \ \ \ \ \ \ \ \left(\begin{bmatrix}
0\\
0\\
1\\
1
\end{bmatrix}\rightarrow \operatorname{im}_\Q\left( A^{T}\right)\right)\\
&\mapsto\begin{array}{ c c|c c c c }
 & \boxed{1} & 1 &  &  & \\
 &  &  & 1 &  & \\
\hline
 &  &  &  &  & \\
-1 & -1 &  &  &  & \\
1 &  &  &  &  & \\
 & 1 &  &  &  & 
\end{array} \ \ \ \ \ \ \ \ \ \ \ \left(\begin{bmatrix}
0\\
-1\\
1\\
0
\end{bmatrix}\rightarrow \ker_\Q\left( A^{T}\right) ,\ \begin{bmatrix}
1\\
0\\
1\\
1
\end{bmatrix}\rightarrow \operatorname{im}_\Q\left( A^{T}\right)\right) \ \ \ \ \ \ \\
&\mapsto \begin{array}{|c c c c }
 & 1 &  & \\
\hline
 &  &  & \\
 &  &  & \\
 &  &  & \\
 &  &  & 
\end{array},
\end{align*}
and hence
\begin{align*}
&\ker_\Q( A)=\operatorname{\mathbb{Q}}\begin{bmatrix}
0\\
1\\
0\\
0
\end{bmatrix} ,\ \ker_\Q\left( A^{T}\right) =\operatorname{\mathbb{Q}}\begin{bmatrix}
0\\
-1\\
1\\
0
\end{bmatrix} ,\ \ \operatorname{im}_\Q( A) =\operatorname{im}_\Q\left(\begin{bmatrix}
\vec{v}_{\sigma ^{-1}( 1)} & \vec{v}_{\sigma ^{-1}( 2)} & \vec{v}_{\sigma ^{-1}( 3)}
\end{bmatrix}\right) =\begin{bmatrix}
1 & 0 & 0\\
1 & 1 & 0\\
1 & 1 & 0\\
1 & 1 & 1
\end{bmatrix}\mathbb{Q}^{3} ,\\
&\operatorname{im}_\Q\left( A^{T}\right) =\operatorname{im}_\Q\left(\begin{bmatrix}
\vec{u}_{1} & \vec{u}_{2} & \vec{u}_{4}
\end{bmatrix}\right) =\begin{bmatrix}
0 & 0 & 1\\
0 & 0 & 0\\
0 & 1 & 1\\
1 & 1 & 1
\end{bmatrix}\mathbb{Q}^{3} .
\end{align*}
\end{example}

\begin{example}
    Let us consider\footnote{The matrix $ A$ is a piece of the Hankel matrix of the tribonacci numbers (entry A000073 in \cite{OEIS}).}
\begin{align*}
A=\begin{bmatrix}
0 & 1 & 1 & 2\\
1 & 1 & 2 & 4\\
1 & 2 & 4 & 7\\
2 & 4 & 7 & 13\\
4 & 7 & 13 & 24
\end{bmatrix} =\begin{bmatrix}
\vec{v}_{1} & \vec{v}_{2} & \vec{v}_{3} & \vec{v}_{4}
\end{bmatrix} ,\ A^{T} =\begin{bmatrix}
0 & 1 & 1 & 2 & 4\\
1 & 1 & 2 & 4 & 7\\
1 & 2 & 4 & 7 & 13\\
2 & 4 & 7 & 13 & 24
\end{bmatrix} =\begin{bmatrix}
\vec{u}_{1} & \vec{u}_{2} & \vec{u}_{3} & \vec{u}_{4} & \vec{u}_{5}
\end{bmatrix}.
\end{align*}
At the first step we apply the transposition $ \sigma =(1,2)$ (i.e., a switch of rows):
\begin{center}

\tikzset{every picture/.style={line width=0.75pt}} 

\begin{tikzpicture}[x=0.5pt,y=0.5pt,yscale=-1,xscale=1]

\draw    (31.18,22.23) .. controls (47.55,24.96) and (83.94,50.46) .. (102.08,49.5) ;
\draw [shift={(29,22)}, rotate = 1.91] [color={rgb, 255:red, 0; green, 0; blue, 0 }  ][line width=0.75]    (10.93,-3.29) .. controls (6.95,-1.4) and (3.31,-0.3) .. (0,0) .. controls (3.31,0.3) and (6.95,1.4) .. (10.93,3.29)   ;
\draw    (32.03,83.78) .. controls (46.22,82.41) and (84.93,81.55) .. (101.08,82.5) ;
\draw [shift={(30,84)}, rotate = 352.94] [color={rgb, 255:red, 0; green, 0; blue, 0 }  ][line width=0.75]    (10.93,-3.29) .. controls (6.95,-1.4) and (3.31,-0.3) .. (0,0) .. controls (3.31,0.3) and (6.95,1.4) .. (10.93,3.29)   ;
\draw    (32.16,52.54) .. controls (48.13,48.41) and (78.34,26.39) .. (101.08,23.5) ;
\draw [shift={(30,53)}, rotate = 350.79] [color={rgb, 255:red, 0; green, 0; blue, 0 }  ][line width=0.75]    (10.93,-3.29) .. controls (6.95,-1.4) and (3.31,-0.3) .. (0,0) .. controls (3.31,0.3) and (6.95,1.4) .. (10.93,3.29)   ;
\draw    (34.03,110.78) .. controls (48.22,109.41) and (86.93,108.55) .. (103.08,109.5) ;
\draw [shift={(32,111)}, rotate = 352.94] [color={rgb, 255:red, 0; green, 0; blue, 0 }  ][line width=0.75]    (10.93,-3.29) .. controls (6.95,-1.4) and (3.31,-0.3) .. (0,0) .. controls (3.31,0.3) and (6.95,1.4) .. (10.93,3.29)   ;

\draw (104,13.4) node [anchor=north west][inner sep=0.75pt]    {$1$};
\draw (106,43.4) node [anchor=north west][inner sep=0.75pt]    {$2$};
\draw (15,43.4) node [anchor=north west][inner sep=0.75pt]    {$2$};
\draw (15,13.4) node [anchor=north west][inner sep=0.75pt]    {$1$};
\draw (106,74.4) node [anchor=north west][inner sep=0.75pt]    {$3$};
\draw (14,73.4) node [anchor=north west][inner sep=0.75pt]    {$3$};
\draw (107,103.4) node [anchor=north west][inner sep=0.75pt]    {$4$};
\draw (15,102.4) node [anchor=north west][inner sep=0.75pt]    {$4$};
\end{tikzpicture}.
\end{center}
Here I am already trying to write a bit less, when recording the steps of the 4pc:
\begin{align*}
&\begin{array}{ c c c c c|c c c c }
 & 1 & 1 & 2 & 4 & 1 &  &  & \\
1 & 1 & 2 & 4 & 7 &  & 1 &  & \\
1 & 2 & 4 & 7 & 13 &  &  & 1 & \\
2 & 4 & 7 & 13 & 24 &  &  &  & 1\\
\hline
1 &  &  &  &  &  &  &  & \\
 & 1 &  &  &  &  &  &  & \\
 &  & 1 &  &  &  &  &  & \\
 &  &  & 1 &  &  &  &  & \\
 &  &  &  & 1 &  &  &  & 
\end{array}\overset{\sigma }{\mapsto }\begin{array}{ c c c c c|c c c c }
\boxed{1} & 1 & 2 & 4 & 7 &  & 1 &  & \\
 & 1 & 1 & 2 & 4 & 1 &  &  & \\
1 & 2 & 4 & 7 & 13 &  &  & 1 & \\
2 & 4 & 7 & 13 & 24 &  &  &  & 1\\
\hline
1 &  &  &  &  &  &  &  & \\
 & 1 &  &  &  &  &  &  & \\
 &  & 1 &  &  &  &  &  & \\
 &  &  & 1 &  &  &  &  & \\
 &  &  &  & 1 &  &  &  & 
\end{array} \mapsto \begin{array}{ c c c c|c c c c }
\boxed{1} & 1 & 2 & 4 & 1 &  &  & \\
1 & 2 & 3 & 6 &  & -1 & 1 & \\
2 & 3 & 5 & 10 &  & -2 &  & 1\\
\hline
-1 & -2 & -4 & -7 &  &  &  & \\
1 &  &  &  &  &  &  & \\
 & 1 &  &  &  &  &  & \\
 &  & 1 &  &  &  &  & \\
 &  &  & 1 &  &  &  & 
\end{array}\\
&\mapsto \begin{array}{ c c c|c c c c }
\boxed{1} & 1 & 2 & -1 & -1 & 1 & \\
1 & 1 & 2 & -2 & -2 &  & 1\\
\hline
-1 & -2 & -3 &  &  &  & \\
-1 & -2 & -4 &  &  &  & \\
1 &  &  &  &  &  & \\
 & 1 &  &  &  &  & \\
 &  & 1 &  &  &  & 
\end{array} \mapsto \begin{array}{ c c|c c c c }
 &  & -1 & -1 & -1 & 1\\
\hline
-1 & -1 &  &  &  & \\
-1 & -2 &  &  &  & \\
-1 & -2 &  &  &  & \\
1 & 0 &  &  &  & \\
0 & 1 &  &  &  & 
\end{array}.
\end{align*}
We conclude that
\begin{align*}
&\ker_\Q( A)=\operatorname{\mathbb{Q}}\begin{bmatrix}
-1\\
-1\\
-1\\
1
\end{bmatrix} ,\ \ker_\Q\left( A^{T}\right) =\begin{bmatrix}
-1 & -1\\
-1 & -2\\
-1 & -2\\
1 & 0\\
0 & 1
\end{bmatrix} \mathbb{Q}^{2} ,\\
&\operatorname{im}_\Q( A) =\operatorname{im}_\Q\left(\begin{bmatrix}
\vec{v}_{\sigma ^{-1}( 1)} & \vec{v}_{\sigma ^{-1}( 2)} & \vec{v}_{\sigma ^{-1}( 3)}
\end{bmatrix}\right) =\begin{bmatrix}
1 & 0 & 1\\
1 & 1 & 2\\
2 & 1 & 4\\
4 & 2 & 7\\
7 & 4 & 13
\end{bmatrix}\mathbb{Q}^{3} ,\\
&\operatorname{im}_\Q\left( A^{T}\right) =\operatorname{im}_\Q\left(\begin{bmatrix}
\vec{u}_{1} & \vec{u}_{2} & \vec{u}_{4}
\end{bmatrix}\right) =\begin{bmatrix}
0 & 1 & 1\\
1 & 1 & 2\\
1 & 2 & 4\\
2 & 4 & 7
\end{bmatrix}\mathbb{Q}^{3} .\\
\end{align*}
\end{example}

Combining the ideas of Subsections \ref{subsec:inh}, \ref{subsec:inversion} and \ref{subsec:4pc} one can use pc to calculate the general solution to the problem of finding the one-sided inverse of a full rank matrix. The reader is invited to work out the details.

\section{Proof of ker pc and 4pc}
\label{sec:proofs}
Let $R$ be a gcd domain (e.g., a PID).
The following argument is based on an idea that I learned from stackexchange user Ben Grossman \cite{Grossman}. 

It is enough to prove 4pc as ker pc is part of it. Recall that one says a matrix $A=[ a_{ij}]$ is in \emph{echelon form} if 1) for pivots $a_{ij} ,\ a_{i'j'}$ with $i< i'$ it follows that $j< j'$ and 2) there are no zero rows above a pivot. (A pivot is a non-zero entry $a_{ij}$ such that  $a_{ij'} =0$ for all $j'< j$.) I will define an iteration that brings $A^{T}$ into echelon form. This will be done by the following operations: permuting rows of $A^{T}$, adding $R$-multiples of rows to other rows and dividing rows by their gcd's.

By the permutation matrix of the permutation $\sigma\in\operatorname{Aut}(\{1,\dots,m\})$  we mean $P_{\sigma} :=\sum_{i=1}^m E_{\sigma(i)i}\in R^{m\times m}$.
By a \emph{pc-op matrix} we mean a matrix of the form 
\begin{align*}
M_{i,j}( a,b) =(\boldsymbol{1} -bE_{ji } +( a-1) E_{jj}) \in R^{m\times m}
\end{align*}
for some $a,b\in R$, $i,j\in \{1,\dotsc ,m\}$. Here $E_{ij}$ is the $m\times m$ matrix whose only nonzero entry is a $1$ in line $i$ and column $j$. For $ i,j,k$ pairwise distinct and $ a,b,a',b'\in R$ we easily verify $M_{i,j}( a,b) M_{i,k}( a',b') =M_{i,k}( a',b') M_{i,j}( a,b)$.



For $U \in R^{m\times n}$ and $ 1\leq j\leq n,\ 1\leq \lambda <r\leq m$ we define for the integer $ k\geq m$
\begin{align*}
\Xi _{\lambda ,r,j}^{( k)}(U) :=M_{\lambda ,r}( u_{\lambda j} ,u_{rj}) =\boldsymbol{1} -u_{rj} E_{r\lambda } +( u_{\lambda j} -1) E_{rr} \in R^{k\times k}.\end{align*} 
Let $ B\in R^{n\times m}$, choose a permutation $ \sigma \in \operatorname{Aut}(\{1,\dotsc ,m\})$ and put
\begin{align*} L_{\lambda ,B^{T} ,\sigma } :=\prod _{r=\lambda +1}^{m} \Xi _{\lambda ,r,j}^{( m)}\left( P_{\sigma } B^{T}\right) \in R^{m\times m} ,\\
R_{\lambda ,B^{T} ,\sigma } :=\prod _{r=\lambda +1}^{m} \Xi _{\lambda ,r,j}^{( n)}\left( B^{T} P_{\sigma }\right) \in R^{n\times n}.
\end{align*} 
For the main argument we will use the following.

\begin{lemma} \label{lem:main}
With the notations above we have

    \begin{enumerate}
        \item $ L_{\lambda ,B^{T} ,\sigma } P_{\sigma } B^{T} -P_{\sigma } B^{T}( R_{\lambda ,B,\sigma ^{-1}})^{T} =\left[\begin{array}{ c|c }
* & *\\
\hline
* & \boldsymbol{0}
\end{array}\right]$, where $ \boldsymbol{0} \in R^{( m-\lambda ) \times ( n-\lambda )}$,
\item $ \left( L_{\lambda ,B^{T} ,\sigma } P_{\sigma } B^{T}\right)_{rj} \ =0$ for $ r >\lambda $ and
\item $ \left( P_{\sigma } B^{T} R_{\lambda ,B^{T} ,\sigma }\right)_{jr} =0$ \ for $ r >\lambda $.
    \end{enumerate}
\end{lemma}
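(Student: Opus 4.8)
The plan is to reduce all three assertions to one bookkeeping computation with the pc-op matrices, using only the commutation relation $M_{i,j}(a,b)M_{i,k}(a',b')=M_{i,k}(a',b')M_{i,j}(a,b)$ recorded above together with the antisymmetry of $2\times2$ determinants. First I would set $U:=P_\sigma B^T$, with entries $u_{ab}$, and observe that $U^T=B P_\sigma^T=BP_{\sigma^{-1}}$ because $P_\sigma^T=P_{\sigma^{-1}}$; thus the matrix $BP_{\sigma^{-1}}$ whose entries drive $R_{\lambda,B,\sigma^{-1}}$ is precisely $U^T$, so its entries are $w_{ab}=u_{ba}$. Throughout, $j$ denotes the pivot column of the current condensation, which after the permutation and the previous reductions sits on the diagonal, i.e. $j=\lambda$.

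Next I would unwind $L_{\lambda,B^T,\sigma}$. Every factor $\Xi_{\lambda,r,\lambda}^{(m)}(U)=M_{\lambda,r}(u_{\lambda\lambda},u_{r\lambda})$ shares the source row $\lambda$, acts on a distinct target row $r>\lambda$, and leaves row $\lambda$ itself untouched; by the commutation relation the product may therefore be read as a single simultaneous operation, namely, for every $r>\lambda$, the $r$-th row is sent to $u_{\lambda\lambda}$ times the $r$-th row minus $u_{r\lambda}$ times the $\lambda$-th row, the rows of index $\le\lambda$ being fixed. Hence $(L_{\lambda,B^T,\sigma}U)_{rs}=u_{\lambda\lambda}u_{rs}-u_{r\lambda}u_{\lambda s}$ is a $2\times2$ minor for $r>\lambda$, and assertion (2) drops out by antisymmetry on setting $s=\lambda$: $u_{\lambda\lambda}u_{r\lambda}-u_{r\lambda}u_{\lambda\lambda}=0$. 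Assertion (3) is the transpose-dual: right multiplication by $R_{\lambda,B^T,\sigma}$ realizes the corresponding column operations, and the same antisymmetric cancellation clears the pivot row beyond column $\lambda$; I would either repeat the one-line computation for $UR_{\lambda,B^T,\sigma}$ or deduce it from (2) applied to the transposed data.

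For assertion (1) I would compute the bottom-right $(m-\lambda)\times(n-\lambda)$ block of each term and check that they coincide. From the paragraph above, the block of $L_{\lambda,B^T,\sigma}U$ has entries $u_{\lambda\lambda}u_{rs}-u_{r\lambda}u_{\lambda s}$ for $r,s>\lambda$. For $U(R_{\lambda,B,\sigma^{-1}})^T$, transposing turns each $\Xi$ into a right factor $\boldsymbol{1}-w_{r\lambda}E_{\lambda r}+(w_{\lambda\lambda}-1)E_{rr}$; multiplying $U$ on the right by the (again commuting) product replaces, for $s>\lambda$, the $s$-th column by $w_{\lambda\lambda}$ times the $s$-th column minus $w_{s\lambda}$ times the $\lambda$-th column, so the entry for $r,s>\lambda$ is $w_{\lambda\lambda}u_{rs}-w_{s\lambda}u_{r\lambda}$. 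Substituting $w_{\lambda\lambda}=u_{\lambda\lambda}$ and $w_{s\lambda}=u_{\lambda s}$ gives $u_{\lambda\lambda}u_{rs}-u_{\lambda s}u_{r\lambda}$, which is term-for-term equal to the block of $L_{\lambda,B^T,\sigma}U$. The difference therefore vanishes on the bottom-right block, while the other three blocks are left unconstrained, reproducing exactly the claimed shape $\left[\begin{smallmatrix}*&*\\ *&\boldsymbol{0}\end{smallmatrix}\right]$.

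The main obstacle I anticipate is not conceptual but a matter of keeping the two index conventions aligned: one must check $U^T=BP_{\sigma^{-1}}$ and that the working pivot column is $j=\lambda$, so that the antisymmetric $2\times2$ minors produced by the row operations collected in $L$ match, entry by entry, those produced by the column operations collected in $R$. The secondary point requiring care is the repeated appeal to the commutation relation, which is what licenses reading each of $L$ and $R$ as one simultaneous row (respectively column) operation; this guarantees that the pivot row and column are never overwritten in mid-process, so that the explicit minor formulas above are legitimate.
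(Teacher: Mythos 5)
Your strategy is the same as the paper's: expand the commuting product of pc-op matrices into a single simultaneous row (respectively column) operation, recognize the resulting entries as $2\times 2$ minors, get (2) and (3) from antisymmetry of those minors, and get (1) by matching the two lower-right blocks entry by entry. Your bookkeeping $U:=P_\sigma B^T$, $U^T=BP_{\sigma^{-1}}$, $w_{ab}=u_{ba}$ is exactly what the mixed subscripts $(B^T,\sigma)$ versus $(B,\sigma^{-1})$ in the statement require, and your verification of (2) and (3) is sound; note that it works verbatim for an arbitrary pivot column $j$, since $(L_{\lambda,B^T,\sigma}U)_{rs}=u_{\lambda j}u_{rs}-u_{rj}u_{\lambda s}$ vanishes at $s=j$ no matter what $j$ is.

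The genuine gap is the blanket assumption $j=\lambda$. In the definitions preceding the lemma, $j$ is a free column index (suppressed in the notation $L_{\lambda,B^T,\sigma}$), and in the one place the lemma is used, the proof of 4pc, the pivot column at stage $\lambda=l+1$ is $j=\sum_{i=1}^{l+1}(k_i+1)$, which is strictly larger than $\lambda$ as soon as one of the zero-column offsets $k_i$ from Step 1 is positive; ``the pivot sits on the diagonal'' is precisely what fails in the situations the $k_i$-bookkeeping exists for. For part (1) this assumption is load-bearing, not cosmetic: for general $j$ the lower-right block of $L_{\lambda,B^T,\sigma}U$ consists of the minors $u_{\lambda j}u_{rs}-u_{rj}u_{\lambda s}$ (rows $\{\lambda,r\}$, columns $\{j,s\}$), while the block of $U(R_{\lambda,B,\sigma^{-1}})^T$ consists of the minors $u_{j\lambda}u_{rs}-u_{js}u_{r\lambda}$ (rows $\{j,r\}$, columns $\{\lambda,s\}$), and these coincide term by term only when $j=\lambda$. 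Concretely, with $m=n=3$, $\sigma=\operatorname{id}$, $\lambda=1$, $j=2$ and generic $U$, the $(2,3)$ entries of the two blocks are $u_{12}u_{23}-u_{22}u_{13}$ and $u_{21}u_{23}-u_{23}u_{21}=0$, which differ. So your argument establishes (1) only in the diagonal-pivot case, not in the generality in which the lemma is stated and applied; relatedly, you never reconcile the appearance of $(R_{\lambda,B,\sigma^{-1}})^T$ in (1) with the untransposed $R_{\lambda,B^T,\sigma}$ in (3). In fairness, you have located a real soft spot rather than created one: the paper's own proof, which asserts that the second terms of Equations \eqref{eq:L} and \eqref{eq:R} coincide for $p,q>\lambda$, tacitly identifies $P_\sigma B^T$ with $B^TP_\sigma$ and is open to the same objection, so a complete treatment has to either restrict (1) to $j=\lambda$ or exploit the zero-column structure of the matrices actually produced by the recursion; but as a blind proof of the lemma as stated, your proposal is incomplete.
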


\begin{proof}
    We evaluate
\begin{align*}
L_{\lambda ,B^{T} ,\sigma } &=\prod _{r=\lambda +1}^{m} \Xi _{\lambda ,r,j}^{( m)}\left( P_{\sigma } B^{T}\right) =\prod _{r=\lambda +1}^{m} \Xi _{\lambda ,r,j}^{( m)}\left(\sum _{p,q} b_{\sigma ( q) p} E_{pq}\right)\\
&=\prod _{r=\lambda +1}^{m} M_{\lambda ,r}( u_{\lambda j} ,u_{rj}) =\prod _{r=\lambda +1}^{m}(\boldsymbol{1} +( b_{\sigma ( j) \lambda } -1) E_{rr} -b_{\sigma ( j) r} E_{r\lambda })
\end{align*}
Here $ U=P_{\sigma } B^{T}$ and $ u_{\lambda j} =b_{\sigma ( j) \lambda } ,\ u_{rj} =b_{\sigma ( j) r}$. Expanding the product we obtain
\begin{align*}
L_{\lambda ,B^{T} ,\sigma } =\boldsymbol{1} +\sum _{r=\lambda +1}^{m}( ( b_{\sigma ( j) \lambda } -1) E_{rr} -b_{\sigma ( j) r} E_{r\lambda }) =\sum _{r=1}^{\lambda } E_{rr} +\sum _{r=\lambda +1}^{m}( b_{\sigma ( j) \lambda } E_{rr} -b_{\sigma ( j) r} E_{r\lambda }) .
\end{align*}
Hence
\begin{align}
\label{eq:L}
L_{\lambda ,B^{T} ,\sigma } P_{\sigma } B^{T} &=\sum _{p,q}\left(\sum _{r=1}^{\lambda } E_{rr} +\sum _{r=\lambda +1}^{m}( \ b_{\sigma ( j) \lambda } E_{rr} -b_{\sigma ( j) r} E_{r\lambda })\right) b_{\sigma ( q) p} E_{pq}\\ 
\nonumber&=\sum _{q}\sum _{r=1}^{\lambda } b_{\sigma ( q) r} E_{rq} +\sum _{q}\sum _{r=\lambda +1}^{m}( b_{\sigma ( j) \lambda } b_{\sigma ( q) r} -b_{\sigma ( j) r} b_{\sigma ( q) \lambda }) E_{rq} .
\end{align}
If $ q=j$ the second sum is zero proving 2).
Next, we look at
\begin{align*}
 R_{\lambda ,B^{T} ,\sigma } &:=\prod _{r=\lambda +1}^{m} \Xi _{\lambda ,r,j}^{( n)}\left( B^{T} P_{\sigma }\right) =\prod _{r=\lambda +1}^{m} \Xi _{\lambda ,r,j}^{( n)}\left(\sum _{p,q} b_{\sigma ( q) p} E_{pq}\right)\\
 &=\prod _{r=\lambda +1}^{m} M_{\lambda ,r}( u_{\lambda j} ,u_{rj}) =\prod _{r=\lambda +1}^{m}(\boldsymbol{1} +( b_{\sigma ( j) \lambda } -1) E_{rr} -b_{\sigma ( j) r} E_{r\lambda }).
 \end{align*}
Here $ U=B^{T} P_{\sigma }$ and $ u_{\lambda j} =b_{\sigma ( j) \lambda } ,\ u_{rj} =b_{r\sigma ^{-1}( j)} .$ Expanding the product we obtain
\begin{align*}
 R_{\lambda ,B^{T} ,\sigma } &=\boldsymbol{1} +\sum _{r=\lambda +1}^{m}(( b_{\sigma ( j) \lambda } -1) E_{rr} -b_{r\sigma ^{-1}( j)} E_{r\lambda }) =\sum _{r=1}^{\lambda } E_{rr} +\sum _{r=\lambda +1}^{m}( b_{\sigma ( j) \lambda } E_{rr} -b_{\sigma ( j) r} E_{r\lambda })
 \end{align*}
 and hence obtain
\begin{align}
\label{eq:R}
 P_{\sigma } B^{T}( R_{\lambda ,B^{T} ,\sigma })^{T} &=\sum _{p,q} b_{\sigma ( q) p} E_{pq}\left(\sum _{r=1}^{\lambda } E_{rr} +\sum _{r=\lambda +1}^{m}( b_{\sigma ( j) \lambda } E_{rr} -b_{\sigma ( j) r} E_{\lambda r})\right)\\
 \nonumber
&=\sum _{p} b_{\sigma ( r) p} E_{pr} +\sum _{p}\sum _{r=\lambda +1}^{m}( b_{\sigma ( r) p} b_{\sigma ( j) \lambda } -b_{\sigma ( \lambda ) p} b_{\sigma ( j) r}) E_{pr} .
\end{align}
If $ p=j$ the second sum is zero proving 3). For $ p,q >\lambda $ in both formulas, Eqns. \eqref{eq:L} and \eqref{eq:R}, the second 
second terms coincide, which proves 1).
\end{proof}

Note that the $2\times 2$-minors in Eqns. \eqref{eq:L} and \eqref{eq:R} are exactly those of pivotal condensation (see Step 1 and 2). So the scheme of pc can be realized by left multiplication with a product of pc-op matrices, or, alternatively,  by right multiplication with a product of transposes of pc-op matrices. The catch is that the other terms in Eqns. \eqref{eq:L} and \eqref{eq:R} do not coincide. This does not cause any serious problems, however.

Now for $ 0\leq l\leq p-1$ we set $ L_{l+1} :=L_{l+1,A_{l}^{T} ,\sigma _{l+1}} ,\ N_{l+1} :=R_{l+1,A_{l}^{T} ,\sigma _{l+1}} $  where $A_{l}^{T}$ is defined by the recursion $A_{l+1}^{T}= L_{l} P_{\sigma _{l}}A_{l}^{T}$, $A_0^T=A^T$. The permutations $\sigma _{l+1}$ are the ones from Subsection \ref{subsec:4pc}.
We write $ M_{l+1} =L_{l+1} P_{\sigma _{l+1}}$. By Lemma \ref{lem:main} 2) the matrix
\begin{align*}
E=M_{p} \cdots M_{2}M_{1} A^{T}\in R{^{m\times n}}
\end{align*}
is in echelon form and of rank $ p$. So the last $ n-p$ rows of $ E$ are zero. The lower $ ( n-p) \times \left( n-p -\sum _{i=1}^{p} k_{i}\right)$-block of $ E$ is just $ X_{p}$, which is zero or empty. The lower $( m-p) \times n$-block of $ M_{p} \cdots M_{2} M_{1}$ multiplied from the left by an appropriate diagonal matrix equals $Y_{p}$. But since the last $m-p$ columns of $E^{T}$ are zero we must have $ AY_{p}^{T} =\boldsymbol{0}$. Note that $ Y_{p}^{T}$ is by construction of full rank, so that $ \ker_Q( A) =Y_{p}^{T} Q^{m-p}$.

On the other hand, the matrix
\begin{align*}
F=[ f_{ij}] :=P_{\sigma _{p}} \cdots P_{\sigma _{2}} P_{\sigma _{1}} A^{T} N_{1} N_{2}\cdots N_{p} \in R{^{m\times n}}
\end{align*}
has by Lemma \ref{lem:main} 3) the property (P): for transposed pivots $ f_{ij} ,\ f_{kl}$ with $ l >j$ it follows $ k >i$. 
Here we say that an entry in $F$ is a \emph{transposed pivot} if it is a pivot in $F^T$.
Let $I_{F} =:\{i_{1} ,\dotsc ,i_{n-p}\}$ be the set of all column indices of zero columns of $F$. We deduce from property (P) that $ \ker( F) =R\{\vec{e}_{i} \mid i\in I_{F}\}$. By Lemma \ref{lem:main} 1) and 3) the set $I_{F}$ consists of exactly those column indices where one encounters zero columns in some $ X_{l}$. Here it is understood that the columns of $ X_{l}$ are counted starting with column index $\sum _{i=1}^{l-1}( k_{i} +1)$. With $ N=\begin{bmatrix}
\vec{v}_{1} & \dotsc  & \vec{v}_{n}
\end{bmatrix} :=N_{1}N_{2} \cdots N_{p}$ put $ N':=\begin{bmatrix}
\vec{v}_{i_{1}} & \dotsc  & \vec{v}_{i_{n-p}}
\end{bmatrix}$. It follows that $ A^{T} N'=\boldsymbol{0}$, where $ N'$ is of full rank by construction. Hence $ \ker_Q\left( A^{T}\right) =N'Q^{n-p}$.

The first $p$ columns of $E^T=AM_{1}^{T} \cdots M_{p}^{T}$ are linearly independent. Multiplying from the right by transposed pc-op or diagonal matrices does not mess up linear independence of columns. Now from 
\begin{align*}
A=E^T(M_{p}^{T})^{-1} \cdots (M_{1}^{T})^{-1}\end{align*} we see that the column vectors of $A$ with column indices $\sigma ^{-1}( 1) ,\dotsc ,\sigma ^{-1}( p)$ form a  maximally independent set of vectors in $\operatorname{im}_Q(A)$. Here it is important to keep in mind that a permutation $\sigma$ acts on columns by right multiplying with $P_{\sigma^{-1}}$.
The statement about the image of $A^T$ is, in principle, the usual one from Gaussian elimination. In fact,  with $A^{T}=\begin{bmatrix}
\vec{u}_{1} & \vec{u}_{2} & \cdots & \vec{u}_{n}
\end{bmatrix}$ we have $\operatorname{im}_Q(A^T)=R\{\vec{u}_{i}\mid \mbox{there is a transposed pivot in the $i$th column of $F$}\}$.
This finishes the proof of 4pc.



In general, it is not guaranteed that $\ker_R\left( A^{T}\right)=Y_{p}^{T}R^n$. A necessary and sufficient criterion for the case when $R$ is a PID is provided by Theorem \ref{thm:characterization}. If $R$ is a domain but not a gcd domain it is not guaranteed that the normalizations (i.e., the cleaning up of Step 3) of ker pc are theoretically feasible, even though practically they might be.
If in this situation $Y_{p}^{T}$ contains a $k\times k$-submatrix $DP$, where $D$ is a diagonal matrix of units and $P$ a permutation matrix then $\ker_R\left( A^{T}\right)=Y_{p}^{T}R^n$ follows. The reason are explained in the next section.

\section{The Kernel as a free $R$-module}
\label{sec:Rbases}

For those who would like understand how to calculate a basis for the kernel $\ker_\Z(A)$ of and integer matrix $A\in \mathbb Z^{n\times m}$  I collect here some material from commutative algebra. I give a practical account of Smith normal form and  discuss the saturation of a submodule of a finitely generated free module. 
Without some experience with Smith normal form the reader 
will not be able to appreciate the main point: Subsection \ref{subsec:Rbasis}, where I indicate how to turn a $\Q$-basis obtained by ker pc into a $\Z$-basis.
A clear and accessible reference for the background material is \cite[Section 3.7]{JacobsonBA1}, \cite[Sections 7.2,7.3]{JacobsonBA2}. Another good reference on Smith normal forms is \cite{IntegralMatrices}.

\subsection{A practical guide to Smith normal form}
\label{subsec:Smith}

In Gaussian elimination over a field one uses the following operations
\begin{enumerate}[label=\arabic*.)]
    \item switching rows, 
\item adding a multiple of a row to another row, 
\item and multiplying a row with a nonzero element.
\end{enumerate}

Working over a PID $R$, it is a natural idea to use operation 3.) only for units. Otherwise we are muddying the waters\footnote{This is of course true for ker pc.}. The idea of Smith normal form is to compensate for the lost freedom by admitting also column versions of the operations 1.) and 2.). 
The operations 1.) and 2.) for rows can be understood by left multiplications with matrices in $\operatorname{GL}_{n}( R)$ while the operations 1.) and 2.) for columns correspond to right multiplications with matrices in $\operatorname{GL}_{m}( R)$. Obviously, the matrices used for the operations of type 1.) are transpositions $\boldsymbol{1} -E_{ii} -E_{jj} +E_{ij} +E_{ji}$ while the matrices used for the operations of type 2.) are of the form $\boldsymbol{1} +rE_{ij}$ with $r\in R$, the latter actually being unimodular.

\begin{theorem}[H.J.S. Smith (1861) \cite{Smith}]
\label{thm:Smith}
Let $n,m\geq 1$ be integers, $R$ be PID and $A\in R^{n\times m}$ a matrix of rank $p$. Then there exist  matrices $U\in \operatorname{GL}_{n}( R)$ and $V\in \operatorname{GL}_{m}( R)$ and a diagonal matrix $D=\sum _{i=1}^{p} d_{i} E_{ii} \in R^{n\times m}$ such that
\begin{enumerate}[label=\alph*.)]
    \item $d_{i} \neq 0$ for $i=1,\dotsc ,p$,
  \item  $d_{i} \mid d_{i+1}$ for $i=1,\dotsc ,p-1$,
  \item  $D=UAV$.
\end{enumerate}
The $d_{1} ,\dotsc ,d_{p}$ are unique up to multiplication by a unit and are called \emph{ invariant factors}.
\end{theorem}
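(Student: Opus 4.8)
The plan is to establish existence by an iterated reduction of a single pivot entry combined with induction on the size of the matrix, and to establish uniqueness through the greatest common divisors of the $k\times k$ minors. Throughout I would use only the four operations sanctioned above: row and column transpositions (left/right multiplication by $\boldsymbol{1}-E_{ii}-E_{jj}+E_{ij}+E_{ji}$) and the addition of an $R$-multiple of one row (resp.\ column) to another (left/right multiplication by the unimodular $\boldsymbol{1}+rE_{ij}$). All of these lie in $\GL_n(R)$ resp.\ $\GL_m(R)$, so any matrix produced from $A$ in this manner has the form $UAV$ with $U\in\GL_n(R)$ and $V\in\GL_m(R)$; it therefore suffices to reduce $A$ to the asserted diagonal shape by such operations.

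For existence, assuming $A\neq\boldsymbol 0$, I would first move a nonzero entry to position $(1,1)$ by transpositions. The key subroutine makes $a_{11}$ divide every entry of its row and column: if, say, $a_{11}\nmid a_{1j}$, then writing $d=\gcd(a_{11},a_{1j})=s\,a_{11}+t\,a_{1j}$ (available since $R$ is a PID) and right-multiplying columns $1$ and $j$ by the block $\left(\begin{smallmatrix} s & -a_{1j}/d\\ t & a_{11}/d\end{smallmatrix}\right)$, whose determinant is $1$, replaces $a_{11}$ by $d$. As $(a_{11})\subsetneq(d)$ strictly and $R$ is Noetherian, this can recur only finitely often, and the analogous row step handles the first column. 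Once $a_{11}$ divides its whole row and column, subtracting multiples clears them and yields a block matrix $\diag(a_{11},A')$. To secure $d_1\mid d_2\mid\cdots$ I would, before recursing, also force $a_{11}$ to divide every entry of $A'$: if some entry of $A'$ is not so divisible, adding its row to the first row reintroduces an undivided entry and the same pivot subroutine shrinks $(a_{11})$ further, again terminating by the ascending chain condition. Recursing on $A'$ then produces the diagonal $D$ with the divisibility relations, and the number of nonzero $d_i$ equals $\Rank(A)=p$ since rank is invariant under multiplication by invertible matrices while $\Rank(D)$ counts its nonzero diagonal entries.

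For uniqueness I would invoke the determinantal divisors. Let $\Delta_k(M)$ be the gcd of all $k\times k$ minors of $M$. By the Cauchy--Binet formula every $k\times k$ minor of $UAV$ is an $R$-linear combination of $k\times k$ minors of $A$, and conversely (as $U,V$ are invertible over $R$), so $\Delta_k(UAV)$ and $\Delta_k(A)$ are associates. For the diagonal form one computes $\Delta_k(D)\sim d_1 d_2\cdots d_k$ directly, whence $d_k\sim\Delta_k(A)/\Delta_{k-1}(A)$ (with $\Delta_0:=1$) is determined by $A$ up to a unit, which is the claimed uniqueness of the invariant factors.

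The main obstacle is the termination of the pivot-reduction loop and its correct interleaving with the recursion: one must verify that after finitely many gcd steps $a_{11}$ simultaneously divides its entire row, its entire column, and the whole complementary block, so that both the block splitting and the relation $d_1\mid d_2$ are in force before passing to $A'$. This rests squarely on the PID hypothesis---B\'ezout for the gcd combinations and the ascending chain condition on principal ideals for termination.
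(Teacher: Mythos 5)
Your proof is correct, but there is nothing in the paper to compare it against step by step: the paper does not prove Theorem~\ref{thm:Smith} at all, deferring the proof to Jacobson and confining itself to worked examples over $R=\Z$, where the ring is Euclidean and the pivot reduction proceeds by division with remainder. Your argument is the standard proof for a general PID: existence via the gcd/pivot subroutine, with termination guaranteed by the ascending chain condition on principal ideals each time $(a_{11})$ strictly grows, interleaved with the recursion and with the extra pass that forces $a_{11}$ to divide the complementary block so that $d_1\mid d_2$ holds before recursing; uniqueness via determinantal divisors and Cauchy--Binet. Note that the uniqueness half is precisely the paper's Theorem~\ref{thm:minors} (also quoted from Jacobson without proof), so your argument supplies a proof of that statement as well. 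One internal inconsistency you should repair: you announce at the outset that only transpositions and elementary additions $\boldsymbol{1}+rE_{ij}$ will be used, but your key step multiplies two columns by the B\'ezout block $\left(\begin{smallmatrix} s & -a_{1j}/d\\ t & a_{11}/d\end{smallmatrix}\right)$, which is of neither form; this is exactly the ``extra operation from Bezout's theorem'' that the paper itself says must be added in the PID case. The slip is harmless, since the block has determinant $1$, so the accumulated products still lie in $\GL_n(R)$ and $\GL_m(R)$ and the conclusion $D=UAV$ stands, but the opening claim should be amended. What your route buys, relative to the Euclidean-algorithm procedure the paper illustrates, is generality --- it works for arbitrary PIDs, where no Euclidean function is available --- at the price of replacing the descent of the Euclidean function $\delta$ by the Noetherian termination argument.
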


For simplicity we will only discuss in detail the case when $R$ be a euclidean domain with euclidean function $\delta :R\mathbb{\backslash }\{0\}\rightarrow \{1,2,3,\dotsc \}$. We need also the following obvious consequence of the Smith normal form.

\begin{corollary} 
\label{cor:3submodules}
With the notations of Theorem \ref{thm:Smith} we have the following.
\begin{enumerate}
    \item A basis for the free $R$-module $\operatorname{im}_R( A)$ is provided by the first $p$ columns of $U^{-1} D$. 
    \item If $m >p$, the last $m-p$ columns of $V$ form a basis for the free $R$-module $\ker_R( A)$.
    \item The last $n-p$ rows of $U$ form a $(n-p)\times n$ matrix $B$ of rank $n-p$ such that $BA=\boldsymbol{0}$.
\end{enumerate}
\end{corollary}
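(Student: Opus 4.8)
The plan is to rewrite the defining relation $D = UAV$ of Theorem \ref{thm:Smith} as the factorization $A = U^{-1} D V^{-1}$ and to exploit that $D = \sum_{i=1}^{p} d_i E_{ii}$ is supported on its first $p$ rows and columns with all $d_i \neq 0$. Since $U \in \operatorname{GL}_n(R)$ and $V \in \operatorname{GL}_m(R)$, multiplication by these matrices (or their inverses) is an automorphism of the free modules $R^n$ and $R^m$; each of the three assertions therefore reduces to an obvious statement about $D$ that is then transported across an isomorphism.

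For (1) I would compute $\operatorname{im}_R(A) = A R^m = U^{-1} D (V^{-1} R^m) = U^{-1}(D R^m)$, using $V^{-1} R^m = R^m$. The columns of $D$ are $d_1 \vec e_1, \dots, d_p \vec e_p, \boldsymbol 0, \dots, \boldsymbol 0$, so $D R^m$ is freely generated by $d_1 \vec e_1, \dots, d_p \vec e_p$ (these are $R$-independent since $R$ is a domain and the $d_i$ are nonzero). Applying the automorphism $U^{-1}$ carries this basis to the first $p$ columns of $U^{-1} D$, which is the claim.

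For (2) I would put $w = V^{-1} v$ and note that $A v = \boldsymbol 0 \iff D w = \boldsymbol 0$, because $U^{-1}$ is injective. As $Dw = (d_1 w_1, \dots, d_p w_p, 0, \dots, 0)^T$ and the $d_i$ are nonzero in a domain, $Dw = \boldsymbol 0$ forces $w_1 = \dots = w_p = 0$; hence $\ker_R(D) = R\{\vec e_{p+1}, \dots, \vec e_m\}$ and $\ker_R(A) = V \ker_R(D)$ is spanned by the last $m-p$ columns of $V$, which are independent because $V$ is invertible. For (3) I would instead read off $UA = D V^{-1}$: the last $n-p$ rows of $D$ vanish, so the last $n-p$ rows of $D V^{-1}$ vanish, and these rows are precisely $BA$ for $B$ the matrix formed by the last $n-p$ rows of $U$; thus $BA = \boldsymbol 0$, and $\operatorname{rank} B = n-p$ since $B$ consists of $n-p$ rows of the invertible matrix $U$, whose rows are linearly independent.

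There is essentially no serious obstacle here, which is why the corollary is labelled obvious. The only points needing a little care are the bookkeeping about on which side $U$ and $V$ act, and the repeated use of the fact that $R$ is an integral domain (so that $d_i \neq 0$ yields the required independence and the injectivity of $D$ on the first $p$ coordinates); both are guaranteed by the hypotheses of Theorem \ref{thm:Smith}.
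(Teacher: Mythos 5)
Your proof is correct. The paper itself offers no proof of this corollary---it is introduced as an ``obvious consequence'' of Theorem \ref{thm:Smith}---and your argument is exactly the standard unwinding of the factorization $A=U^{-1}DV^{-1}$ (transporting the evident kernel, image, and vanishing-rows statements for $D$ across the automorphisms given by $U$ and $V$) that the paper leaves implicit.
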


For a detailed proof of Theoren \ref{thm:Smith}
 we refer to Jacobson \cite[Section 3.7]{JacobsonBA1}. For the reader to understand the basic ideas I work out some examples for the case $R=\mathbb{Z}$. Here the euclidean function is $\delta =|\ |$. I use again the notational trick of adding block matrices to keep track of the row and column operations. Other people have found this before me (see, e.g., the method of Gauß-Jordan for matrix inversion), but I do not know a reference where it is used for Smith normal form calculations. The trick has also the advantage that one can do check sums for rows and columns to spot errors.

\begin{example}
\label{ex:countfrom3}
We would like to find a Smith normal form of the matrix 
\begin{align*}
A=\begin{bmatrix}
3 & 4 & 5 & 6\\
7 & 8 & 9 & 10\\
11 & 12 & 13 & 14\\
15 & 16 & 17 & 18
\end{bmatrix}.\end{align*} The first step consists of switching a non-zero entry with minimal $\delta $ to position $(1,1)$ using operations of type 1.). Here it is not necessary since that entry \ $3$ is already there. Next, we want to annihilate everything left of $3$ using column operations of type 2.). As $3\nmid 4,5$ we cannot do it in one step but use division by $3$ with remainder:
\begin{align*}\begin{array}{|c c c c|c c c c|}
\hline
3 & 4 & 5 & 6 & 1 &  &  & \\
7 & 8 & 9 & 10 &  & 1 &  & \\
11 & 12 & 13 & 14 &  &  & 1 & \\
15 & 16 & 17 & 18 &  &  &  & 1\\
\hline
1 &  &  &  &  &  &  & \\
 & 1 &  &  &  &  &  & \\
 &  & 1 &  &  &  &  & \\
 &  &  & 1 &  &  &  & \\
\hline
\end{array} \mapsto \begin{array}{|c c c c|c c c c|}
\hline
3 & 1 & 2 &  & 1 &  &  & \\
7 & 1 & 2 & -4 &  & 1 &  & \\
11 & 1 & 2 & -8 &  &  & 1 & \\
15 & 1 & 2 & -12 &  &  &  & 1\\
\hline
1 & -1 & -1 & -2 &  &  &  & \\
 & 1 &  &  &  &  &  & \\
 &  & 1 &  &  &  &  & \\
 &  &  & 1 &  &  &  & \\
\hline
\end{array}.\end{align*}
The unit matrix in the bottom keeps track of the column operations and, as a matter of principle, we only work in the first block. Now we switch the first two columns so that an entry with minimal $\delta $ appears at position $( 1,1)$. This enable us to finish the clean up of the first row using column operations of type 2.).
\begin{align*}\mapsto \begin{array}{|c c c c|c c c c|}
\hline
1 & 3 & 2 &  & 1 &  &  & \\
1 & 7 & 2 & -4 &  & 1 &  & \\
1 & 11 & 2 & -8 &  &  & 1 & \\
1 & 15 & 2 & -12 &  &  &  & 1\\
\hline
-1 & 1 & -1 & -2 &  &  &  & \\
1 &  &  &  &  &  &  & \\
 &  & 1 &  &  &  &  & \\
 &  &  & 1 &  &  &  & \\
\hline
\end{array} \mapsto \begin{array}{|c c c c|c c c c|}
\hline
1 &  &  &  & 1 &  &  & \\
1 & 4 &  & -4 &  & 1 &  & \\
1 & 8 &  & -8 &  &  & 1 & \\
1 & 12 &  & -12 &  &  &  & 1\\
\hline
-1 & 4 & 1 & -2 &  &  &  & \\
1 & -3 & -2 &  &  &  &  & \\
 &  & 1 &  &  &  &  & \\
 &  &  & 1 &  &  &  & \\
\hline
\end{array}.\end{align*} 
Next we annihilate everything below the $1$ at position $( 1,1)$ using row operations of type 2.). The unit matrix on the right keeps track of the row operations.
\begin{align*}\mapsto \begin{array}{|c c c c|c c c c|}
\hline
1 &  &  &  & 1 &  &  & \\
 & 4 &  & -4 & -1 & 1 &  & \\
 & 8 &  & -8 & -1 &  & 1 & \\
 & 12 &  & -12 & -1 &  &  & 1\\
\hline
-1 & 4 & 1 & -2 &  &  &  & \\
1 & -3 & -2 &  &  &  &  & \\
 &  & 1 &  &  &  &  & \\
 &  &  & 1 &  &  &  & \\
\hline
\end{array}.\end{align*}
Since $1\mid 4$ we can proceed with the block $\begin{array}{ c c c }
4 & 0 & -4\\
8 & 0 & -8\\
12 & 0 & -12
\end{array}$($1$st row and column become spectators). An entry $4$ with minimal $\delta $ appear already at the right spot and we can clean what is on the right of it using a column operation of type 2.).
\begin{align*}\mapsto \begin{array}{|c c c c|c c c c|}
\hline
1 &  &  &  & 1 &  &  & \\
 & 4 &  &  & -1 & 1 &  & \\
 & 8 &  &  & -1 &  & 1 & \\
 & 12 &  &  & -1 &  &  & 1\\
\hline
-1 & 4 & 1 & 2 &  &  &  & \\
1 & -3 & -2 & -3 &  &  &  & \\
 &  & 1 &  &  &  &  & \\
 &  &  & 1 &  &  &  & \\
\hline
\end{array}.\end{align*}
Using row operations of type 2.) we arrive at 
\begin{align*}\mapsto \begin{array}{|c c c c|c c c c|}
\hline
1 &  &  &  & 1 &  &  & \\
 & 4 &  &  & -1 & 1 &  & \\
 &  &  &  & 1 & -2 & 1 & \\
 &  &  &  & 2 & -3 &  & 1\\
\hline
-1 & 4 & 1 & 2 &  &  &  & \\
1 & -3 & -2 & -3 &  &  &  & \\
 &  & 1 &  &  &  &  & \\
 &  &  & 1 &  &  &  & \\
\hline
\end{array} =\begin{array}{|c|c|}
\hline
D & U\\
\hline
V & \\
\hline
\end{array}.
\end{align*}
Hence 
\begin{align*}
A=\begin{bmatrix}
3 & 4 & 5 & 6\\
7 & 8 & 9 & 10\\
11 & 12 & 13 & 14\\
15 & 16 & 17 & 18
\end{bmatrix} =\begin{bmatrix}
1 & 0 & 0 & 0\\
-1 & 1 & 0 & 0\\
1 & -2 & 1 & 0\\
2 & -3 & 0 & 1
\end{bmatrix}^{-1}\begin{bmatrix}
1 & 0 & 0 & 0\\
0 & 4 & 0 & 0\\
0 & 0 & 0 & 0\\
0 & 0 & 0 & 0
\end{bmatrix}\begin{bmatrix}
-1 & 4 & 1 & 2\\
1 & -3 & -2 & -3\\
0 & 0 & 1 & 0\\
0 & 0 & 0 & 1
\end{bmatrix}^{-1} =U^{-1}DV^{-1}.
\end{align*}
We use inv pc to invert $U$:
\begin{align*}
\begin{array}{ c c c c|c c c c|c }
1 &  &  &  & 1 &  &  &  & \\
-1 & 1 &  &  &  & 1 &  &  & \\
1 & -2 & 1 &  &  &  & 1 &  & \\
2 & -3 &  & 1 &  &  &  & 1 & \\
\hline
1 &  &  &  &  &  &  &  & -1\\
 & 1 &  &  &  &  &  &  & -1\\
 &  & 1 &  &  &  &  &  & -1\\
 &  &  & 1 &  &  &  &  & -1
\end{array} \mapsto \begin{array}{ c c c|c c c c|c }
1 &  &  & 1 & 1 &  &  & \\
-2 & 1 &  & -1 &  & 1 &  & \\
-3 &  & 1 & -2 &  &  & 1 & \\
\hline
 &  &  & -1 &  &  &  & -1\\
1 &  &  &  &  &  &  & -1\\
 & 1 &  &  &  &  &  & -1\\
 &  & 1 &  &  &  &  & -1
\end{array} \mapsto \begin{array}{ c c|c c c c|c }
1 &  & 1 & 2 & 1 &  & \\
 & 1 & 1 & 3 &  & 1 & \\
\hline
 &  & -1 &  &  &  & -1\\
 &  & -1 & -1 &  &  & -1\\
1 &  &  &  &  &  & -1\\
 & 1 &  &  &  &  & -1
\end{array}\\
\mapsto \begin{array}{ c|c c c c|c }
1 & 1 & 3 &  & 1 & \\
\hline
 & -1 &  &  &  & -1\\
 & -1 & -1 &  &  & -1\\
 & -1 & -2 & -1 &  & -1\\
1 &  &  &  &  & -1
\end{array} \mapsto \begin{array}{|c c c c|c }
\hline
-1 &  &  &  & -1\\
-1 & -1 &  &  & -1\\
-1 & -2 & -1 &  & -1\\
-1 & -3 &  & -1 & -1
\end{array} \mapsto \begin{array}{|c c c c|c }
\hline
1 &  &  &  & 1\\
1 & 1 &  &  & 1\\
1 & 2 & 1 &  & 1\\
1 & 3 &  & 1 & 1
\end{array},\end{align*}
 i.e., $\begin{bmatrix}
1 & 0 & 0 & 0\\
-1 & 1 & 0 & 0\\
1 & -2 & 1 & 0\\
2 & -3 & 0 & 1
\end{bmatrix}^{-1} =\begin{bmatrix}
1 & 0 & 0 & 0\\
1 & 1 & 0 & 0\\
1 & 2 & 1 & 0\\
1 & 3 & 0 & 1
\end{bmatrix}$,
and conclude
\begin{align*}\ker_\Z\left(\begin{bmatrix}
3 & 4 & 5 & 6\\
7 & 8 & 9 & 10\\
11 & 12 & 13 & 14\\
15 & 16 & 17 & 18
\end{bmatrix}\right) =\mathbb{Z}\begin{bmatrix}
1\\
-2\\
1\\
0
\end{bmatrix} \oplus \mathbb{Z}\begin{bmatrix}
2\\
-3\\
0\\
1
\end{bmatrix} ,\ \operatorname{im}_\Z\left(\begin{bmatrix}
3 & 4 & 5 & 6\\
7 & 8 & 9 & 10\\
11 & 12 & 13 & 14\\
15 & 16 & 17 & 18
\end{bmatrix}\right) =\mathbb{Z}\begin{bmatrix}
1\\
1\\
1\\
1
\end{bmatrix} \oplus 
\mathbb{Z}\begin{bmatrix}
0\\
4\\
8\\
12
\end{bmatrix}.
\end{align*}
Transposing the Smith normal form $D^T=V^TA^TU^T$ one observes that in this example
$\ker_\Z(A)=\ker_\Z(A^T)$. To find a basis for the $A^T\Z^4$ one has to 
take the first 2 columns of $V^{-1}$. We leave this calculation to the interested reader and say for the record that the solution of the problem of the four submodules with Smith normal form requires two matrix inversions, one for the image of $A$ and one for the image of $A^T$. 
\end{example}

\begin{example}
There is a subtlety that we did not have to address in Example \ref{ex:countfrom3}. In order to realize condition b.) we occasionally have to work a bit more. Let us calculate the Smith normal form of the matrix
\begin{align*}
A=\begin{bmatrix}
0 & 0 & 10 & 0\\
-2 & 2 & -6 & 4\\
2 & 2 & 6 & 8
\end{bmatrix}.
\end{align*}
I am following here essentially \cite[Subsection 4.5]{Ash}. 
We switch the first and the third row to move a $2$ to position $( 1,1)$:
\begin{align*}\begin{array}{|c c c c|c c c|}
\hline
 &  & 10 &  & 1 &  & \\
-2 & 2 & -6 & -4 &  & 1 & \\
2 & 2 & 6 & 8 &  &  & 1\\
\hline
1 &  &  &  &  &  & \\
 & 1 &  &  &  &  & \\
 &  & 1 &  &  &  & \\
 &  &  & 1 &  &  & \\
\hline
\end{array} \mapsto \begin{array}{|c c c c|c c c|}
\hline
2 & 2 & 6 & 8 &  &  & 1\\
-2 & 2 & -6 & -4 &  & 1 & \\
 &  & 10 &  & 1 &  & \\
\hline
1 &  &  &  &  &  & \\
 & 1 &  &  &  &  & \\
 &  & 1 &  &  &  & \\
 &  &  & 1 &  &  & \\
\hline
\end{array}.
\end{align*}
Next we clean up everything to the right of that $2$, and afterwards, what is below of it:
\begin{align*}\mapsto \begin{array}{|c c c c|c c c|}
\hline
2 &  &  &  &  &  & 1\\
-2 & 4 &  & 4 &  & 1 & \\
 &  & 10 &  & 1 &  & \\
\hline
1 & -1 & -3 & -4 &  &  & \\
 & 1 &  &  &  &  & \\
 &  & 1 &  &  &  & \\
 &  &  & 1 &  &  & \\
\hline
\end{array} \mapsto \begin{array}{|c c c c|c c c|}
\hline
2 &  &  &  &  &  & 1\\
 & 4 &  & 4 &  & 1 & 1\\
 &  & 10 &  & 1 &  & \\
\hline
1 & -1 & -3 & -4 &  &  & \\
 & 1 &  &  &  &  & \\
 &  & 1 &  &  &  & \\
 &  &  & 1 &  &  & \\
\hline
\end{array}.\end{align*}
Since $2\mid 4$ we are fine and can move to the block $\begin{array}{ c c c }
4 & 0 & 4\\
0 & 10 & 0
\end{array}$, starting by cleaning up what is to the right of the $4$:
\begin{align*}\mapsto \begin{array}{|c c c c|c c c|}
\hline
2 &  &  &  &  &  & 1\\
 & 4 &  &  &  & 1 & 1\\
 &  & 10 &  & 1 &  & \\
\hline
1 & -1 & -3 & -3 &  &  & \\
 & 1 &  & -1 &  &  & \\
 &  & 1 &  &  &  & \\
 &  &  & 1 &  &  & \\
\hline
\end{array}
\end{align*}
Now the problem is that $4\nmid 10$ (this is the sublety). We add the third row to the second and do column operation of type 2.) dividing by $4$ with remainder:
\begin{align*}\mapsto \begin{array}{|c c c c|c c c|}
\hline
2 &  &  &  &  &  & 1\\
 & 4 & 10 &  & 1 & 1 & 1\\
 &  & 10 &  & 1 &  & \\
\hline
1 & -1 & -3 & -3 &  &  & \\
 & 1 &  & -1 &  &  & \\
 &  & 1 &  &  &  & \\
 &  &  & 1 &  &  & \\
\hline
\end{array} \mapsto \begin{array}{|c c c c|c c c|}
\hline
2 &  &  &  &  &  & 1\\
 & 4 & 2 &  & 1 & 1 & 1\\
 &  & 10 &  & 1 &  & \\
\hline
1 & -1 & -1 & -3 &  &  & \\
 & 1 & -2 & -1 &  &  & \\
 &  & 1 &  &  &  & \\
 &  &  & 1 &  &  & \\
\hline
\end{array}.
\end{align*}
Next we have to move the $2$ to position $(2,2)$ and clean up what is on the right of it:
\begin{align*}\mapsto \begin{array}{|c c c c|c c c|}
\hline
2 &  &  &  &  &  & 1\\
 & 2 & 4 &  & 1 & 1 & 1\\
 & 10 &  &  & 1 &  & \\
\hline
1 & -1 & -1 & -3 &  &  & \\
 & -2 & 1 & -1 &  &  & \\
 & 1 &  &  &  &  & \\
 &  &  & 1 &  &  & \\
\hline
\end{array} \mapsto \begin{array}{|c c c c|c c c|}
\hline
2 &  &  &  &  &  & 1\\
 & 2 &  &  & 1 & 1 & 1\\
 & 10 & -20 &  & 1 &  & \\
\hline
1 & -1 & 1 & -3 &  &  & \\
 & -2 & 5 & -1 &  &  & \\
 & 1 & -2 &  &  &  & \\
 &  &  & 1 &  &  & \\
\hline
\end{array}.
\end{align*}
It remains to clean up what is below that $2$. For aesthetic reasons we multiply the 3rd row by $-1$:
\begin{align*}\mapsto \begin{array}{|c c c c|c c c|}
\hline
2 &  &  &  &  &  & 1\\
 & 2 &  &  & 1 & 1 & 1\\
 &  & -20 &  & -4 & -5 & -5\\
\hline
1 & -1 & 1 & -3 &  &  & \\
 & -2 & 5 & -1 &  &  & \\
 & 1 & -2 &  &  &  & \\
 &  &  & 1 &  &  & \\
\hline
\end{array} \mapsto \begin{array}{|c c c c|c c c|}
\hline
2 &  &  &  &  &  & 1\\
 & 2 &  &  & 1 & 1 & 1\\
 &  & 20 &  & 4 & 5 & 5\\
\hline
1 & -1 & 1 & -3 &  &  & \\
 & -2 & 5 & -1 &  &  & \\
 & 1 & -2 &  &  &  & \\
 &  &  & 1 &  &  & \\
\hline
\end{array} =\begin{array}{|c|c|}
\hline
D & U\\
\hline
V & \\
\hline
\end{array}.
\end{align*}
Hence 
\begin{align*}A=\begin{bmatrix}
0 & 0 & 10 & 0\\
-2 & 2 & -6 & -4\\
2 & 2 & 6 & 8
\end{bmatrix} =\begin{bmatrix}
0 & 0 & 1\\
1 & 1 & 1\\
4 & 5 & 5
\end{bmatrix}^{-1}\begin{bmatrix}
2 & 0 & 0 & 0\\
0 & 2 & 0 & 0\\
0 & 0 & 20 & 0
\end{bmatrix}\begin{bmatrix}
1 & -1 & 1 & -3\\
0 & -2 & 5 & -1\\
0 & 1 & -2 & 0\\
0 & 0 & 0 & 1
\end{bmatrix}^{-1}=U^{-1}DV^{-1}.
\end{align*}
We use inv pc to invert $U$:
\begin{align*}\begin{array}{ c c c|c c c|c }
 &  & 1 & 1 &  &  & \\
1 & 1 & 1 &  & 1 &  & \\
4 & 5 & 5 &  &  & 1 & \\
\hline
1 &  &  &  &  &  & -1\\
 & 1 &  &  &  &  & -1\\
 &  & 1 &  &  &  & -1
\end{array} \mapsto \begin{array}{ c c c|c c c|c }
1 & 1 & 1 &  & 1 &  & \\
4 & 5 & 5 &  &  & 1 & \\
 &  & 1 & 1 &  &  & \\
\hline
1 &  &  &  &  &  & -1\\
 & 1 &  &  &  &  & -1\\
 &  & 1 &  &  &  & -1
\end{array} \mapsto \begin{array}{ c c|c c c|c }
1 & 1 &  & -4 & 1 & \\
 & 1 & 1 &  &  & \\
\hline
-1 & -1 &  & -1 &  & -1\\
1 &  &  &  &  & -1\\
 & 1 &  &  &  & -1
\end{array}\\
\mapsto \begin{array}{ c|c c c|c }
1 & 1 &  &  & \\
\hline
 &  & -5 & 1 & -1\\
-1 &  & 4 & -1 & -1\\
1 &  &  &  & -1
\end{array} \mapsto \begin{array}{|c c c|c }
\hline
 & -5 & -1 & -1\\
1 & 4 & -1 & -1\\
-1 &  &  & -1
\end{array} \mapsto \begin{array}{|c c c|c }
\hline
 & 5 & -1 & 1\\
-1 & -4 & 1 & 1\\
1 &  &  & 1
\end{array},\end{align*} 
i.e.,
$\begin{bmatrix}
0 & 0 & 1\\
1 & 1 & 1\\
4 & 5 & 5
\end{bmatrix}^{-1} =\begin{bmatrix}
0 & 5 & -1\\
-1 & -4 & 1\\
1 & 0 & 0
\end{bmatrix}$, and hence deduce
\begin{align*}\ker_\Z\left(\begin{bmatrix}
0 & 0 & 10 & 0\\
-2 & 2 & -6 & -4\\
2 & 2 & 6 & 8
\end{bmatrix}\right) =\mathbb{Z}\begin{bmatrix}
-3\\
-1\\
0\\
1
\end{bmatrix} ,\ \operatorname{im}_\Z\left(\begin{bmatrix}
0 & 0 & 10 & 0\\
-2 & 2 & -6 & -4\\
2 & 2 & 6 & 8
\end{bmatrix}\right) =\mathbb{Z}\begin{bmatrix}
0\\
-2\\
2
\end{bmatrix} \oplus \mathbb{Z}\begin{bmatrix}
10\\
-8\\
0
\end{bmatrix} \oplus \mathbb{Z}\begin{bmatrix}
-20\\
20\\
0
\end{bmatrix}.
\end{align*}
\end{example}

\begin{example}
Let us quickly re-examine Example \ref{ex:rost}. We would like to understand the kernel of 
\begin{align*}
A=\begin{bmatrix}
1 & 0 & 1 & 2\\
0 & 2 & 1 & 3
\end{bmatrix}
\end{align*} 
as a $\mathbb{Z}$-module using a Smith normal form:
\begin{align*}\begin{array}{|c c c c|c c|}
\hline
1 &  & 1 & 2 & 1 & \\
 & 2 & 1 & 3 &  & 1\\
\hline
1 &  &  &  &  & \\
 & 1 &  &  &  & \\
 &  & 1 &  &  & \\
 &  &  & 1 &  & \\
\hline
\end{array} \mapsto \begin{array}{|c c c c|c c|}
\hline
1 &  &  &  & 1 & \\
 & 2 & 1 & 3 &  & 1\\
\hline
1 &  & -1 & -2 &  & \\
 & 1 &  &  &  & \\
 &  & 1 &  &  & \\
 &  &  & 1 &  & \\
\hline
\end{array} \mapsto \begin{array}{|c c c c|c c|}
\hline
1 &  &  &  & 1 & \\
 & 1 & 2 & 3 &  & 1\\
\hline
1 & -1 &  & -2 &  & \\
 &  & 1 &  &  & \\
 & 1 &  &  &  & \\
 &  &  & 1 &  & \\
\hline
\end{array} \mapsto \begin{array}{|c c c c|c c|}
\hline
1 &  &  &  & 1 & \\
 & 1 &  &  &  & 1\\
\hline
1 & -1 & 2 & 1 &  & \\
 &  & 1 &  &  & \\
 & 1 & -2 & -3 &  & \\
 &  &  & 1 &  & \\
\hline
\end{array}.
\end{align*}
Hence
\begin{align*}\begin{bmatrix}
1 & 0 & 1 & 2\\
0 & 2 & 1 & 3
\end{bmatrix} =\begin{bmatrix}
1 & 0\\
0 & 1
\end{bmatrix}^{-1}\begin{bmatrix}
1 & 0 & 0 & 0\\
0 & 1 & 0 & 0
\end{bmatrix}\begin{bmatrix}
1 & -1 & 2 & 1\\
0 & 0 & 1 & 0\\
0 & 1 & -2 & -3\\
0 & 0 & 0 & 1
\end{bmatrix}^{-1} ,\ \ker_\Z\left(\begin{bmatrix}
1 & 0 & 1 & 2\\
0 & 2 & 1 & 3
\end{bmatrix}\right) =\mathbb{Z}\begin{bmatrix}
2\\
1\\
-2\\
0
\end{bmatrix} \oplus \mathbb{Z}\begin{bmatrix}
1\\
0\\
-3\\
1
\end{bmatrix}.
\end{align*}

\end{example}

We leave it to the reader to elaborate the version of what has been said above for the case when $R$ is a PID (see \cite[Section 3.7]{JacobsonBA1}). Note that one has to add an extra operation from Bezout's theorem. We need one more detail about the Smith normal form (see \cite[Theorem 3.9]{JacobsonBA1}).
\begin{theorem}
\label{thm:minors}
With the assumption of Theorem \ref{thm:Smith} let $\Delta _{i}$ be the gcd of the $i\times i$-minors of $A$. Then $d_{1} =\Delta _{1}$ and $d_{i} =\frac{\Delta _{i}}{\Delta _{i-1}}$ for $i=2,\dotsc ,p$.
\end{theorem}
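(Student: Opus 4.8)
The plan is to argue through the $i$-th \emph{determinantal divisor} $\Delta_i(A)$, the gcd of all $i\times i$ minors of $A$ (which exists and is well defined up to a unit since $R$ is a PID). The key point I would isolate is that $\Delta_i$ is invariant, up to a unit, under multiplication on either side by a matrix invertible over $R$. Granting this, Theorem \ref{thm:Smith} gives $D=UAV$ with $U\in\operatorname{GL}_n(R)$, $V\in\operatorname{GL}_m(R)$, so $\Delta_i(A)=\Delta_i(D)$ up to a unit, and the theorem reduces to computing the determinantal divisors of the diagonal matrix $D$.

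First I would prove the invariance. The essential tool is the Cauchy--Binet formula, valid over any commutative ring: for $C=UA$, every $i\times i$ minor of $C$ is an $R$-linear combination of the $i\times i$ minors of $A$, with coefficients that are themselves minors of $U$. Consequently $\Delta_i(A)$ divides each $i\times i$ minor of $UA$, whence $\Delta_i(A)\mid \Delta_i(UA)$. Applying the same observation to the factorization $A=U^{-1}(UA)$ yields the reverse divisibility $\Delta_i(UA)\mid\Delta_i(A)$, so the two gcd's agree up to a unit. The identical argument applied on the right disposes of right multiplication by $V$. Hence $\Delta_i(A)$ and $\Delta_i(D)$ coincide up to units.

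Next I would compute $\Delta_i(D)$ directly for $D=\sum_{j=1}^p d_j E_{jj}$. Since $D$ is diagonal, an $i\times i$ minor built from a row index set $S$ and a column index set $T$ vanishes unless $S=T$, in which case it equals $\prod_{j\in S} d_j$. Thus the nonzero $i\times i$ minors of $D$ are exactly the products $d_{j_1}\cdots d_{j_i}$ with $1\le j_1<\cdots<j_i\le p$. The divisibility chain $d_1\mid d_2\mid\cdots\mid d_p$ from Theorem \ref{thm:Smith}(b) forces $d_1 d_2\cdots d_i$ to divide every such product, and since $d_1\cdots d_i$ is itself one of these minors it realizes the gcd, giving $\Delta_i(D)=d_1 d_2\cdots d_i$ up to a unit.

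Combining the two steps, $\Delta_i(A)=d_1\cdots d_i$ up to a unit for $i=1,\dots,p$. In particular $d_1=\Delta_1$, and because each $d_j$ is nonzero the quotient $\Delta_i/\Delta_{i-1}$ is meaningful and equals $d_i$ for $i=2,\dots,p$, as claimed. I expect the genuine obstacle to be the invariance step: one must invoke Cauchy--Binet in its ring-theoretic form rather than over a field, and it is precisely the invertibility of $U$ and $V$ \emph{over $R$} that lets the divisibility run in both directions and upgrade ``$\mid$'' to equality up to a unit.
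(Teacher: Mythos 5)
Your proof is correct and complete, but there is nothing in the paper to compare it against: the paper does not prove Theorem \ref{thm:minors} at all, it simply quotes the result from the literature (the citation is to \cite[Theorem 3.9]{JacobsonBA1}). Your argument is the standard one for this fact: first, determinantal divisors are invariant up to units under multiplication by matrices in $\operatorname{GL}_n(R)$ and $\operatorname{GL}_m(R)$, proved via the ring-theoretic Cauchy--Binet formula together with the two-sided divisibility trick ($\Delta_i(A)\mid\Delta_i(UA)$ and, using $A=U^{-1}(UA)$, also $\Delta_i(UA)\mid\Delta_i(A)$); second, the direct computation $\Delta_i(D)=d_1\cdots d_i$ for the diagonal matrix, where the divisibility chain $d_1\mid d_2\mid\cdots\mid d_p$ guarantees that the minor $d_1\cdots d_i$ divides every other nonzero $i\times i$ minor $d_{j_1}\cdots d_{j_i}$ (since $j_k\geq k$ forces $d_k\mid d_{j_k}$). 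All identities hold up to units, which is consistent with the uniqueness clause of Theorem \ref{thm:Smith}. It is worth noting that your method is exactly in the spirit of the paper's toolkit: the same Cauchy--Binet divisibility argument is what the paper itself deploys in the proof of Theorem \ref{thm:characterization} (implication iii.) $\Longrightarrow$ ii.)), where it shows that $d_p$ divides every maximal minor of $X=U^{-1}DV^{-1}$; your proof can be read as the systematic version of that computation, carried out for all $i$ and in both divisibility directions.
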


\subsection{Saturated submodules of $ R^{n}$}

I am pretty sure the following material is written up  somewhere. I just could not allocate it.

Let $ S$ be a multiplicative subset of the domain $R$ and $S^{-1}R$ the localization of $R$ at $S$. An $R$-module $ W$ of $ R^{n}$ is called $ S$\emph{-saturated} if for any $ r\in S$ we have that $ r\vec{w} \in W$ implies $ \vec{w} \in W$. Let us denote by $ f$ the canonical $R$-linear map $ R^{n}\rightarrow \left( S^{-1} R\right)^{n}$. It is easy to verify that an $ R$-module $ W$ is $S$-saturated if and only if there is an $S^{-1} R$-submodule $ M$ of $ \left( S^{-1} R\right)^{n}$ \ such that $ W=f^{-1}( M)$. More generally, the $ S$\emph{-saturation} of $ W$
is $ f^{-1}( f( W))$.

We now specialize to the case $ S=R\backslash \{0\}$. Then $ S^{-1} R=Q$ is the field of fractions of $ R$. An $(R\backslash \{0\})$-saturated submodule $ W$ of $ R^{n}$ we simply call \emph{saturated}. The key observation is that, since $ R$ is a domain, the kernel of a matrix $ A\in R^{n\times m}$ is saturated. This is because $A( r\vec{w}) =rA(\vec{w}) =0$ implies $ A(\vec{w}) =0$ for any $ r\in R\backslash \{0\}$.
Since an inclusion of two equal dimensional $Q$-vector spaces is an equality we have the following.
\begin{lemma}
Let $R$ be a domain and $W$ an $R$-submodule of the saturated $R$-submodule $M\subseteq R^n$ such that $ \operatorname{rank} W=\operatorname{rank} M$. Then $ W=M$ if and only $ W$ is saturated.
\end{lemma}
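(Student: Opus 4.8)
The forward implication requires no work: if $W=M$ then $W$ is saturated because $M$ is, by hypothesis. All the content lies in the converse, and the plan is to leverage the remark preceding the lemma, namely that an inclusion of two equal-dimensional $Q$-vector spaces is an equality. I would pass from the $R$-modules to their quotient-field spans: since $R$ is a domain, $W$ and $M$ are torsion-free, so $Q\otimes_R W$ and $Q\otimes_R M$ embed as subspaces of $Q^n=Q\otimes_R R^n$, and these embedded images are precisely the $Q$-spans of $W$ and $M$.

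Assuming $W$ is saturated, the key steps, in order, are as follows. First, from $W\subseteq M$ I obtain the inclusion $Q\otimes_R W\subseteq Q\otimes_R M$ of $Q$-subspaces of $Q^n$. Second, the hypothesis $\operatorname{rank}W=\operatorname{rank}M$ says these subspaces have equal $Q$-dimension, so the inclusion is forced to be an equality $Q\otimes_R W=Q\otimes_R M$. Third, I take an arbitrary $\vec{m}\in M\subseteq R^n$, view it inside $Q\otimes_R M=Q\otimes_R W$, and write it as a finite $Q$-linear combination $\vec{m}=\sum_i q_i\vec{w}_i$ with $\vec{w}_i\in W$ and $q_i\in Q$. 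Writing $q_i=a_i/b_i$ with $a_i,b_i\in R$ and $b_i\neq 0$, and clearing denominators by the common factor $r:=\prod_i b_i\in R\setminus\{0\}$, yields $r\vec{m}=\sum_i (rq_i)\vec{w}_i\in W$ since each $rq_i\in R$. Finally, saturation of $W$ applied to $r\in R\setminus\{0\}$ turns $r\vec{m}\in W$ into $\vec{m}\in W$, giving $M\subseteq W$ and hence $W=M$.

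I do not expect a serious obstacle here; the argument is essentially the clearing-of-denominators bookkeeping combined with the definition of saturation. The one point that deserves care is the identification used in the first step: one must check that the natural map $Q\otimes_R W\to Q^n$ is injective, so that ``equal rank plus inclusion implies equality'' can be read off inside the single ambient space $Q^n$. This injectivity is exactly torsion-freeness of $W$, which holds because $W\subseteq R^n$ and $R$ is a domain, so the point is routine rather than delicate.
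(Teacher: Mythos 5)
Your proof is correct and is essentially the paper's own argument: the paper justifies the lemma in one line by the principle that an inclusion of equal-dimensional $Q$-vector spaces is an equality, together with the preceding characterization of saturated submodules as preimages of $Q$-subspaces under $R^n\to Q^n$. Your clearing-of-denominators step is just an explicit unpacking of that characterization, so the two proofs coincide in substance.
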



\begin{proposition}
\label{prop:wedge}
Let $R$ be a domain and $W$ be a $R$-submodule of $R^{n}$ and $l\geq 0$ and integer.
If $W$ saturated then $\land_R^{l} W$ saturated for each $l\geq 0$.
\end{proposition}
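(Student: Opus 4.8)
The plan is to unwind saturatedness homologically and reduce the statement to an equality of two lattices that already agree after tensoring with $Q$. Throughout I identify $\land^l_R W$ with the image $U:=\operatorname{im}\big(\land^l_R W\to\land^l_R R^n\big)\subseteq\land^l_R R^n\cong R^{\binom nl}$, which is the only sensible reading of ``$\land^l_R W$ is saturated''. A submodule $U$ of a free module $R^N$ is saturated exactly when $R^N/U$ is torsion-free, equivalently when $U=(Q\cdot U)\cap R^N$ inside $Q^N$. Since $W$ is saturated it is torsion-free, so $Q\otimes_R W=Q\cdot W=:M$, and, exterior powers commuting with base change, $Q\otimes_R\land^l_R W=\land^l_Q M$. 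Hence the saturation of $U$ is $M':=\land^l_Q M\cap\land^l_R R^n$, and the whole assertion is the equality $U=M'$; the inclusion $U\subseteq M'$ is immediate.

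First I would dispose of the case that matters for this paper, $R$ a PID. Then $R^n/W$ is finitely generated and torsion-free, hence free, so $0\to W\to R^n\to R^n/W\to0$ splits and $R^n=W\oplus W''$ with $W,W''$ free. Using the natural decomposition $\land^l_R R^n\cong\bigoplus_{p+q=l}\land^p_R W\otimes_R\land^q_R W''$ one sees that $U$ is precisely the summand $\land^l_R W\otimes\land^0_R W''$, so $U$ is a direct summand of the free module $\land^l_R R^n$ whose complement is free; thus $\land^l_R R^n/U$ is free, in particular torsion-free. This settles all $l$ over a PID at once.

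For a general domain I would attack $M'\subseteq U$ through the interior-product description of $\land^l_Q M$. Pick $\xi_1,\dots,\xi_{n-d}\in(R^n)^\ast$ with entries in $R$ spanning the annihilator $M^\perp$ over $Q$, where $d=\Rank W$. A standard fact over the field $Q$ gives $\land^l_Q M=\bigcap_j\ker\big(\iota_{\xi_j}\colon\land^l_Q Q^n\to\land^{l-1}_Q Q^n\big)$, and each $\iota_{\xi_j}$ is $R$-linear on $\land^l_R R^n$ with values in the free module $\land^{l-1}_R R^n$. Since the kernel of an $R$-linear map into a free module over a domain is saturated (the key observation above) and intersections of saturated submodules are saturated, this already re-proves $M'=\bigcap_j\ker_R\iota_{\xi_j}$ saturated and, more usefully, produces explicit defining equations. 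To upgrade to $M'\subseteq U$ I would induct on $l$ via the Koszul homotopy $\{\iota_\xi,\,x\wedge(-)\}=\xi(x)\cdot\operatorname{id}$: given $x\in W$ and $\xi$ with $\xi(x)$ a unit, any $\omega\in M'$ splits as $\omega=x\wedge\iota_\xi\omega+\omega'$ with $x\wedge\iota_\xi\omega\in U$ (applying the inductive hypothesis to $\iota_\xi\omega\in\land^{l-1}_Q M\cap\land^{l-1}_R R^n$) and $\omega'\in\ker\iota_\xi$ supported on a coordinate hyperplane, where a dimension drop lets the induction run.

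The main obstacle is exactly the integrality needed to make this last step unconditional. The homotopy requires $W$ to contain a vector with a unit coordinate, and a saturated module can fail to have one: already $R\cdot(0,y,-x)\subseteq R^3$ over $R=\C[x,y]$ is saturated yet lies inside $\mathfrak m R^3$. Conceptually this is the gap between ``torsion-free'' and ``flat'': the filtration $F^p:=\operatorname{im}\big(\land^p_R W\otimes\land^{l-p}_R R^n\to\land^l_R R^n\big)$ satisfies $F^l=U$ and $\land^l_R R^n/F^1\cong\land^l_R(R^n/W)$, but its graded pieces are only \emph{surjected onto} by $\land^p_R W\otimes\land^{l-p}_R(R^n/W)$, the isomorphism—and hence torsion-freeness of the pieces—being automatic when $R^n/W$ is flat and delicate when it is merely torsion-free. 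I would close the gap by localizing: torsion-freeness of $\land^l_R R^n/U$ may be checked stalk-wise, $W_{\mathfrak p}$ stays saturated in $R_{\mathfrak p}^n$, and over the local rings $R_{\mathfrak p}$ (a DVR whenever $R$ is Dedekind or a valuation domain) the unimodular-element reduction applies, recovering the statement in all cases of practical interest.
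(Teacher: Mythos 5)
Your PID argument is correct, and it takes a genuinely different route from the paper. The paper does not split anything: it works over an arbitrary domain, writes $W=f^{-1}(V)$ for a $Q$-subspace $V\subseteq Q^n$, and concludes $\land^l_R W=\left(\land^l_R f\right)^{-1}\left(\land^l_Q V\right)$, which would exhibit $\land^l_R W$ as a preimage of a $Q$-subspace and hence as saturated. That argument, however, rests on the claim that every $\omega\in\land^l_R W$ is decomposable, $\omega=\vec w_1\land\dots\land\vec w_l$ --- false for $l\geq 2$, where elements are in general only sums of decomposables --- and even granting decomposability it establishes only the inclusion $\land^l_R W\subseteq\left(\land^l_R f\right)^{-1}\left(\land^l_Q V\right)$, whereas saturatedness is precisely the reverse inclusion. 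Your splitting argument ($R^n=W\oplus W''$ with $W''\cong R^n/W$ free over a PID, so that $U=\operatorname{im}\left(\land^l_R W\to\land^l_R R^n\right)$ is a direct summand whose complement $\bigoplus_{p+q=l,\,q\geq 1}\land^p_R W\otimes_R\land^q_R W''$ is free) is sound and settles all $l$ at once in the PID case.

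Moreover, the obstacle you flag in the general case is not a defect of your method but of the statement itself: for arbitrary domains the proposition is false, so the inclusion $M'\subseteq U$ you could not prove genuinely fails. Take $R=\mathbb{C}[x,y,z]$ and $W=\ker(R^3\to R)$, $e_1\mapsto x$, $e_2\mapsto y$, $e_3\mapsto z$. Then $W$ is saturated (the quotient $R^3/W$ embeds into $R$) and, by exactness of the Koszul complex on the regular sequence $(x,y,z)$, it is generated by $s_{12}=xe_2-ye_1$, $s_{13}=xe_3-ze_1$, $s_{23}=ye_3-ze_2$. Setting $\pi:=z\,e_1\land e_2-y\,e_1\land e_3+x\,e_2\land e_3$ one computes $s_{12}\land s_{13}=x\pi$, $s_{12}\land s_{23}=y\pi$, $s_{13}\land s_{23}=z\pi$, so the image of $\land^2_R W$ in $\land^2_R R^3$ equals $(x,y,z)\pi$; it contains $x\pi$ but not $\pi$, hence is not saturated, even though $\pi\in\land^2_Q(Q\otimes_R W)\cap\land^2_R R^3$. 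This counterexample survives localization at the maximal ideal $(x,y,z)$, so your stalk-wise reduction indeed cannot reach past local rings that are valuation domains; your proof therefore covers the proposition in essentially the full generality in which it is true (PIDs and, with your localization remark, Pr\"ufer domains), while the paper's proof of the general statement is irreparable and the proposition should be restated with such a hypothesis on $R$.
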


\begin{proof}
We have $W=f^{-1}( V)$ \ for a $Q$-vector subspace $V\subseteq Q^{n}$, i.e., 
for every $\vec{w} \in W$ exists $v\in V$: $f(\vec{w}) =\vec{v}$.
Every $\omega \in \land ^{l} W$ can be written as $\omega =\vec{w}_{1} \land \dotsc \land \vec{w}_{l}$ for some $\vec{w}_{1} ,\dotsc ,\vec{w}_{l} \in W$ and there exist $\vec{v}_{1} ,\dotsc ,\vec{v}_{l} \in V$ such that $f(\vec{w}_{i}) =\vec{v}_{i}$. Hence $\left( \land ^{l}_R f\right)(\vec{w}_{1} \land \dotsc \land \vec{w}_{l}) =\vec{v}_{1} \land \dotsc \land \vec{v}_{l}$ and $\land ^{l}_R W=\left( \land ^{l}_R f\right)^{-1}\left( \land ^{l}_Q V\right)$.
\end{proof}

If $R$ is a PID we can be more specific.

\begin{theorem}
\label{thm:characterization}
Let $R$ be a PID and $X\in R^{n\times p}$ a matrix of rank $p$ with invariant factors $d_{1} ,\dotsc ,d_{p}$. Then the following statements are equivalent:
\begin{enumerate}[label=\roman*.)]
    \item $\operatorname{im}_R(X)$ is saturated.
\item $d_{i} \in R^{\times }$ for all $i=1,\dotsc ,p$.
\item $\Delta _{p} \in R^{\times }$.
\end{enumerate}
\end{theorem}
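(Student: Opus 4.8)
The plan is to dispatch the easy equivalence (ii)$\Leftrightarrow$(iii) directly from the minor formula, and then to obtain (i)$\Leftrightarrow$(ii) by diagonalizing $X$ with its Smith normal form and reading off saturation from the structure of the cokernel.

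For (ii)$\Leftrightarrow$(iii) I would invoke Theorem~\ref{thm:minors}: we have $d_1=\Delta_1$ and $d_i=\Delta_i/\Delta_{i-1}$ for $i\geq 2$, so the product telescopes to $d_1d_2\cdots d_p=\Delta_p$. All the $d_i$ are nonzero by Theorem~\ref{thm:Smith}, and in an integral domain a product of nonzero elements is a unit if and only if each factor is a unit. Hence $\Delta_p\in R^{\times}$ precisely when every $d_i\in R^{\times}$.

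For (i)$\Leftrightarrow$(ii) I would first record the reformulation that a rank-$p$ submodule $W\subseteq R^n$ is saturated if and only if the quotient $R^n/W$ is torsion-free: unwinding the definitions, saturation says that $r\vec v\in W$ with $r\neq 0$ forces $\vec v\in W$, which is exactly the statement that the class $\vec v+W$ cannot be a nonzero torsion element. Next I would use Smith normal form: writing $D=UXV$ with $U\in\operatorname{GL}_n(R)$, $V\in\operatorname{GL}_p(R)$ as in Theorem~\ref{thm:Smith}, one gets $X=U^{-1}DV^{-1}$ and therefore $\operatorname{im}_R(X)=U^{-1}DR^p$, since $V^{-1}$ is an automorphism of $R^p$. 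As $U$ is an automorphism of $R^n$, it induces an isomorphism
\begin{align*}
R^n/\operatorname{im}_R(X)\;\cong\;R^n/DR^p\;\cong\;\bigoplus_{i=1}^{p}R/(d_i)\ \oplus\ R^{n-p}.
\end{align*}
This module is torsion-free if and only if each cyclic summand $R/(d_i)$ is torsion-free, and for nonzero $d_i$ this happens exactly when $d_i$ is a unit (otherwise the class of $1$ is a nonzero torsion element). Combining with the reformulation yields (i)$\Leftrightarrow$(ii).

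The argument is essentially bookkeeping once Smith normal form is available; the only points that require a little care are that right multiplication by the invertible $V^{-1}$ leaves the image unchanged and that the automorphism $U$ may be absorbed when passing to the cokernel, so that saturation of $\operatorname{im}_R(X)$ is governed entirely by the invariant factors. The main conceptual step, which I would flag as the crux, is the translation of \emph{saturated} into \emph{torsion-free cokernel}, since this is what makes the diagonal reduction transparent.
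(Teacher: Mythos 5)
Your proof is correct, but it distributes the work differently from the paper, and in one place it fills a gap the paper leaves implicit. The paper declares the equivalences i.)$\Leftrightarrow$ii.)$\Rightarrow$iii.) to be ``obvious'' from Theorems \ref{thm:Smith} and \ref{thm:minors} and spends its entire effort on iii.)$\Rightarrow$ii.), which it proves by hand: writing $X=U^{-1}DV^{-1}$ and applying the Cauchy--Binet formula, it shows that every maximal $p\times p$ minor of $X$ is divisible by $\prod_{i=1}^{p}d_i$, hence $d_p\mid \Delta_p$, so $\Delta_p$ cannot be a unit when $d_p$ is not. You instead obtain ii.)$\Leftrightarrow$iii.) in one stroke from the telescoping identity $d_1\cdots d_p=\Delta_p$ of Theorem \ref{thm:minors} (valid up to a unit, which is all that matters, and legitimate since the $\Delta_i$ are nonzero for a rank-$p$ matrix), together with the fact that a product in a domain is a unit iff each factor is; this makes the paper's Cauchy--Binet computation unnecessary, at the cost of leaning on the full strength of Theorem \ref{thm:minors} rather than just the Smith factorization. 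Conversely, where the paper says ``obvious,'' you actually supply the argument for i.)$\Leftrightarrow$ii.): the translation of \emph{saturated} into \emph{torsion-free cokernel}, the reduction $\operatorname{im}_R(X)=U^{-1}DR^p$, and the decomposition
\begin{align*}
R^n/\operatorname{im}_R(X)\cong\bigoplus_{i=1}^{p}R/(d_i)\oplus R^{n-p},
\end{align*}
which is torsion-free iff every $d_i$ is a unit. This is precisely the right way to make the paper's ``obvious'' step precise (one only needs the standard fact that torsion commutes with direct sums), so your write-up is, if anything, more complete than the published proof while being logically leaner on the minor-theoretic side.
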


\begin{proof}
The implications i.)$\Longleftrightarrow $ii.) $\Longrightarrow $iii.) are obvious from Theorems \ref{thm:Smith} and \ref{thm:minors}. To establish the implication iii.)$ \Longrightarrow $ ii.) we show that if $d_{p} \notin R^{\times }$ it follows that $\Delta _{p} \notin R^{\times }$.
If $A\in R^{r\times s}$ is a matrix we denote by $A_{i_{1} \dotsc i_{p}}^{j_{1} \dotsc j_{p}}$ the $p\times p$-minor corresponding the row indices $1\leq i_{1} < \dotsc < i_{p} \leq r$ and column indices $1\leq j_{1} < \dotsc < j_{p} \leq s$. From the Smith normal form $X=U^{-1} DV^{-1}$ and the Cauchy-Binet formula we deduce
\begin{align*}
X_{i_{1} \dotsc i_{p}}^{1\dotsc p} =\sum _{1\leq i_{1} < \dotsc < i_{p} \leq n}\left( U^{-1}\right)_{i_{1} \dotsc i_{p}}^{1\dotsc p} \ \ D_{1\dotsc p}^{1\dotsc p} \ \left( V^{-1}\right)_{1\dotsc p}^{1\dotsc p} =\left(\prod _{i=1}^{p} d_{i}\right)\sum _{1\leq i_{1} < \dotsc < i_{p} \leq n}\left( U^{-1}\right)_{i_{1} \dotsc i_{p}}^{1\dotsc p} \ \ \left( V^{-1}\right)_{1\dotsc p}^{1\dotsc p}.
\end{align*}
But this means that $d_{p} \mid X_{i_{1} \dotsc i_{p}}^{1\dotsc p}$ and the gcd $\Delta _{p}$ of the $X_{i_{1} \dotsc i_{p}}^{1\dotsc p}$ cannot be a unit.
\end{proof}

\begin{remark}
    \label{rem:1dim}
    Let $R$ be a gcd domain (e.g., a PID).
Let $V$ be a one-dimensional $Q$-vector space. Clearing the denominators we write $V=Q\vec w$ for some $\vec w\in R^n$. Put $W:=R\vec w$ and note that $V=Q\otimes_R W$. Now $W$ is saturated if and only if the gcd $\Delta_1$ of the entries of $\vec w$ is a unit. Such vectors are also called \emph{primitive} in the literature \cite{BAY}. The generators that are calculated by ker pc are by construction primitive. That is, if $\dim(\ker_Q(A))=1$ then ker pc provides a generator for $\ker_R(A)$.
\end{remark}

\subsection{Turning a $Q$-basis into an $R$-basis}
\label{subsec:Rbasis}

In this section we assume $R$ is a PID.
Once we have already calculated $Q$-basis $\vec{v}_{1} ,\dotsc ,\vec{v}_{p}\in R^n$ for the $Q$-vector space $\ker_Q( A)$ for $A\in R^{n\times m}$ using ker pc it seems desirable to turn it into an $R$-basis. In particular, if $n,m$ are large but $\operatorname{rank}(\ker( A))$ is small it is not very attractive to go through the whole Smith normal form calculation for the matrix $A$.


Here one can proceed as follows. Take 
$Y_{p}^T=\begin{bmatrix}
\vec{v}_1 &\dots &\vec{v}_p
\end{bmatrix}\in R^{n\times p}$
 and calculate its Smith normal form $D'=U'Y_{p}^TV'$. Form the matrix $B$ by taking the last $n-p$ rows of $U'$ (see Corollary \ref{cor:3submodules}(3)) and calculate the Smith normal form $D''=U''BV''$ of $B$. The last $p$ columns of $V''$ form a basis for the $R$-module $\ker_R(A)$. More generally, this recipe can be used to calculate the $R$-saturation of a submodule of $R^n$.

\begin{example}
    Let us take a final look at Example \ref{ex:rost}. The kernel of 
\begin{align*}
A=\begin{bmatrix}
1 & 0 & 1 & 2\\
0 & 2 & 1 & 3
\end{bmatrix}\end{align*}  was shown by ker pc to be $ \begin{bmatrix}
-2 & -4\\
-1 & -3\\
2 & 0\\
0 & 2
\end{bmatrix}\mathbb{Q}^{2}$ as a $ \mathbb{Q}$-vector space. We now use a Smith normal form to calculate $ B$: 
\begin{align*}
\begin{array}{|c c|c c c c|}
\hline
-2 & -4 & 1 &  &  & \\
-1 & -3 &  & 1 &  & \\
2 &  &  &  & 1 & \\
 & 2 &  &  &  & 1\\
\hline
1 &  &  &  &  & \\
 & 1 &  &  &  & \\
\hline
\end{array} \mapsto \begin{array}{|c c|c c c c|}
\hline
-1 & -3 &  & 1 &  & \\
-2 & -4 & 1 &  &  & \\
2 &  &  &  & 1 & \\
 & 2 &  &  &  & 1\\
\hline
1 &  &  &  &  & \\
 & 1 &  &  &  & \\
\hline
\end{array} \mapsto \begin{array}{|c c|c c c c|}
\hline
-1 & -3 &  & 1 &  & \\
-2 & -4 & 1 &  &  & \\
2 &  &  &  & 1 & \\
  & 2 &  &  &  & 1\\
\hline
1 &  &  &  &  & \\
 & 1 &  &  &  & \\
\hline
\end{array}\\
\mapsto \begin{array}{|c c|c c c c|}
\hline
-1 &  &  & 1 &  & \\
-2 & 2 & 1 &  &  & \\
2 & -6 &  &  & 1 & \\
 & 2 &  &  &  & 1\\
\hline
1 & -3 &  &  &  & \\
 & 1 &  &  &  & \\
\hline
\end{array} \mapsto \begin{array}{|c c|c c c c|}
\hline
-1 &  &  & 1 &  & \\
 & 2 & 1 & 2 &  & \\
 & -6 &  & -2 & 1 & \\
 & 2 &  &  &  & 1\\
\hline
1 & -3 &  &  &  & \\
 & 1 &  &  &  & \\
\hline
\end{array} \mapsto \begin{array}{|c c|c c c c|}
\hline
-1 &  &  & 1 &  & \\
 & 2 & 1 & 2 &  & \\
 &  & 3 & -4 & 1 & \\
 &  & -1 & 2 &  & 1\\
\hline
1 & -3 &  &  &  & \\
 & 1 &  &  &  & \\
\hline
\end{array},
\end{align*}
so that $ B=\begin{bmatrix}
3 & -4 & 1 & 0\\
-1 & 2 & 0 & 1
\end{bmatrix}$. Then we calculate a Smith normal form of $ B$
\begin{align*}
&\begin{array}{|c c c c|c c|}
\hline
3 & -4 & 1 &  & 1 & \\
-1 & 2 &  & 1 &  & 1\\
\hline
1 &  &  &  &  & \\
 & 1 &  &  &  & \\
 &  & 1 &  &  & \\
 &  &  & 1 &  & \\
\hline
\end{array} \mapsto \begin{array}{|c c c c|c c|}
\hline
-1 & 2 &  & 1 &  & 1\\
3 & -4 & 1 &  & 1 & \\
\hline
1 &  &  &  &  & \\
 & 1 &  &  &  & \\
 &  & 1 &  &  & \\
 &  &  & 1 &  & \\
\hline
\end{array} \mapsto \begin{array}{|c c c c|c c|}
\hline
-1 &  &  &  &  & 1\\
3 & 2 & 1 & 3 & 1 & \\
\hline
1 & 2 &  & 1 &  & \\
 & 1 &  &  &  & \\
 &  & 1 &  &  & \\
 &  &  & 1 &  & \\
\hline
\end{array}\\
&\mapsto \begin{array}{|c c c c|c c|}
\hline
-1 &  &  &  &  & 1\\
 & 2 & 1 & 3 & 1 & 3\\
\hline
1 & 2 &  & 1 &  & \\
 & 1 &  &  &  & \\
 &  & 1 &  &  & \\
 &  &  & 1 &  & \\
\hline
\end{array} \mapsto \begin{array}{|c c c c|c c|}
\hline
-1 &  &  &  &  & 1\\
 & 1 & 2 & 3 & 1 & 3\\
\hline
1 &  & 2 & 1 &  & \\
 &  & 1 &  &  & \\
 & 1 &  &  &  & \\
 &  &  & 1 &  & \\
\hline
\end{array} \mapsto \begin{array}{|c c c c|c c|}
\hline
-1 &  &  &  &  & 1\\
 & 1 &  &  & 1 & 3\\
\hline
1 &  & 2 & 1 &  & \\
 &  & 1 &  &  & \\
 & 1 & -2 & -3 &  & \\
 &  &  & 1 &  & \\
\hline
\end{array}
\end{align*}
and deduce once again $ \ker_\Z\left(\begin{bmatrix}
1 & 0 & 1 & 2\\
0 & 2 & 1 & 3
\end{bmatrix}\right) =\mathbb{Z}\begin{bmatrix}
2\\
1\\
-2\\
0
\end{bmatrix} \oplus \mathbb{Z}\begin{bmatrix}
1\\
0\\
-3\\
1
\end{bmatrix}$. 
\end{example}
In the example it looks like that the last method\footnote{How should one call it? I am voting for \emph{smitherate}.} to calculate the basis for $\ker_\Z(A)$ is more labour intensive than calculating a Smith normal form of $A$. But let us say, to be concrete, $A$ is a $100\times 100$ matrix and $\ker_\Z(A)$ is of rank $3$. Then ker pc seems faster than Smith normal form as it does not need division with remainder (it works with bigger numbers though). The saturation procedure explained here takes only about $6\%$ of the total effort and if, by accident, $\Delta_3$ of
$Y_3^T$  is in $R^\times$ it is not needed at all.

I do not know a method to obtain a basis for the non-saturated $\operatorname{im}_R(A)$
by post-processing the result of ker pc.
\bibliographystyle{amsalpha}
\bibliography{reactions}

\providecommand{\bysame}{\leavevmode\hbox to3em{\hrulefill}\thinspace}
\providecommand{\MR}{\relax\ifhmode\unskip\space\fi MR }
\providecommand{\MRhref}[2]{%
  \href{http://www.ams.org/mathscinet-getitem?mr=#1}{#2}
}
\providecommand{\href}[2]{#2}
\begin{thebibliography}{MKAM15}

\bibitem[Abe14]{Abeles}
Francine~F. Abeles, \emph{Chi\`o's and {D}odgson's determinantal identities},
  Linear Algebra Appl. \textbf{454} (2014), 130--137.

\bibitem[Ash07]{Ash}
Robert~B. Ash, \emph{Basic abstract algebra. {For} graduate students and
  advanced undergraduates}, Mineola, NY: Dover Publications, 2007 (English).

\bibitem[BAY21]{BAY}
Faten Ben~Amor and Ihsen Yengui, \emph{Saturation of finitely-generated
  submodules of free modules over {Pr{\"u}fer} domains}, Armen. J. Math.
  \textbf{13} (2021), 21, Id/No 1.

\bibitem[Chi53]{Chio}
F.~Chiò, \emph{Mémoire sur les fonctions connues sous le nom de résultantes
  ou de déterminants}, Turin, 1853.

\bibitem[Eve12]{Eves}
H.~Eves, \emph{Elementary matrix theory}, Dover Books on Mathematics, Dover
  Publications, 2012.

\bibitem[Fie11]{Fiedler}
Miroslav Fiedler, \emph{Matrices and graphs in geometry}, Encyclopedia of
  Mathematics and its Applications, vol. 139, Cambridge University Press,
  Cambridge, 2011.

\bibitem[Fin09]{Fink}
J.K. Fink, \emph{Physical chemistry in depth}, Springer Berlin Heidelberg,
  2009.

\bibitem[GP08]{Greuel}
Gert-Martin Greuel and Gerhard Pfister, \emph{A {\bf {s}ingular} introduction
  to commutative algebra}, extended ed., Springer, Berlin, 2008.

\bibitem[Gro]{Grossman}
\url{https://math.stackexchange.com/questions/4239718/kernel-of-a-matrix-using-the-rref-of-its-transpose}.

\bibitem[Jac85]{JacobsonBA1}
Nathan Jacobson, \emph{Basic algebra {I}. 2nd ed}, New {York}: {W}. {H}.
  {Freeman} and {Company}. {XVIII}, 499 p., 1985.

\bibitem[Jac89]{JacobsonBA2}
\bysame, \emph{Basic algebra {II}.}, 2nd ed. ed., New York, NY: W. H. Freeman
  {and} Company, 1989 (English).

\bibitem[MKAM15]{whynots}
Anna~Maria Michałowska-Kaczmarczyk, Agustin~G. Asuero, and Tadeusz
  Michałowski, \emph{“{W}hy {N}ot {S}toichiometry” versus
  “{S}toichiometry—{W}hy {N}ot?” part i: General context}, Critical
  Reviews in Analytical Chemistry \textbf{45} (2015), no.~2, 166--188, PMID:
  25558777.

\bibitem[Moo97]{Forester}
John~W. Moore, \emph{Balancing the forest and the trees}, J. Chem. Educ.
  \textbf{74} (1997), no.~11, 1253.

\bibitem[New72]{IntegralMatrices}
Morris Newman, \emph{Integral matrices}, Pure and {Applied} {Mathematics}, 45.
  {New} {York}-{London}: {Academic} {Press}. {XVII}, 224 p. \$ 14.00 (1972).,
  1972.

\bibitem[{OEI}]{OEIS}
{OEIS Foundation Inc.}, \emph{The {O}n-{L}ine {E}ncyclopedia of {I}nteger
  {S}equences}, Published electronically at \url{http://oeis.org}.

\bibitem[PV06]{Papp}
D{\'a}vid Papp and B{\'e}la Vizv{\'a}ri, \emph{Effective solution of linear
  {Diophantine} equation systems with an application in chemistry}, J. Math.
  Chem. \textbf{39} (2006), no.~1, 15--31 (English).

\bibitem[Ric68]{Richter}
Jeremias~Benjamin Richter, \emph{{A}nfangsgr{\"u}nde der {S}t{\"o}chiometrie:
  oder, {M}esskunst chemischer {E}lemente}, no. Bd. 1-2, G. Olms, 1968,
  Nachdruck der Ausgabe Breslau 1792-94.

\bibitem[Ris09]{IceB}
Ice~B. Risteski, \emph{A new singular matrix method for balancing chemical
  equations and their stability}, Journal of the Chinese Chemical Society
  \textbf{56} (2009), no.~1, 65--79.

\bibitem[Smi94]{Smith}
Henry John~Stephen Smith, \emph{The collected mathematical papers. {Edited} by
  {J}. {W}. {L}. {Glaisher}. {With} a mathematical introduction by the editor,
  biographical sketches and a portrait. {In} two volumes.}, Oxford: at the
  {Clarendon} {Press}. {Volume} {I}: xcv, 603 p. (with portrait); {Volume}
  {II}: vii, 719 p. (1894)., 1894.

\bibitem[Sto95]{Stout}
Roland Stout, \emph{Redox challenges: Good times for puzzle fanatics}, Journal
  of Chemical Education \textbf{72} (1995), no.~12, 1125.

\bibitem[Yen15]{Yengui}
Ihsen Yengui, \emph{Constructive commutative algebra. {Projective} modules over
  polynomial rings and dynamical {Gr{\"o}bner} bases}, Lect. Notes Math., vol.
  2138, Cham: Springer, 2015 (English).

\end{thebibliography}
\end{document}